\newcommand{\bN}{\mathbf{N}}
\newcommand{\bbN}{{\mathbf N}}
\newcommand{\bbR}{{\mathbf R}}
\newcommand{\bbP}{{\mathbb P}}
\newcommand{\bbZ}{{\mathbf Z}}
\newcommand{\bbE}{\mathbb{E}}
\newcommand{\calB}{\mathcal{B}}
\newcommand{\calG}{\mathcal{G}}
\newcommand{\calL}{\mathcal{L}}
\newcommand{\id}{\operatorname{id}}
\newcommand{\pr}{\operatorname{pr}}
\newcommand{\supp}{\operatorname{supp}}
\newcommand{\Ad}{\operatorname{Ad}}
\newcommand{\Prob}{\operatorname{Prob}}
\newcommand{\diag}{\operatorname{diag}}
\newcommand{\Gr}{\operatorname{Gr}}
\newcommand{\wlong}{\operatorname{w}_{\operatorname{long}}}
\newcommand{\SL}{\operatorname{SL}}
\newcommand{\GL}{\operatorname{GL}}
\newcommand{\dd }{\,{\rm d}}
\newcommand{\abs}[1]{{\left\lvert #1\right\rvert}}
\newcommand{\ip}[2]{{\langle {#1},{#2}\rangle}}
\newcommand{\norm}[1]{{\left\lVert #1\right\rVert}}
\newcommand{\overto}[1]{{\buildrel{#1}\over\longrightarrow}}
\newcommand{\setdef}[2]{ \left\{ \left. {#1}\ \right|\ {#2} \right\} }
\newcommand{\wt}[1]{\widetilde{#1}}
\newcommand{\cexp}[2]{\bbE\left({#1} \mid {#2}\right)}
\newtheorem{theorem}{Theorem}[section]
\newtheorem{lemma}[theorem]{Lemma}
\newtheorem{proposition}[theorem]{Proposition}
\newtheorem{prop}[theorem]{Proposition}
\theoremstyle{definition}
\newtheorem{assumption}[theorem]{Assumption}
\numberwithin{equation}{section}
\providecommand{\abs}[1]{\lvert#1\rvert}
\providecommand{\norm}[1]{\lVert#1\rVert}
\begin{document}

\title{Central Limit Theorem for Cocycles over Hyperbolic Systems}
\author{Alex Furman \and Robert Thijs Kozma}
\address{University of Illinois at Chicago, Chicago, IL, USA}
\email{furman@uic.edu}
\address{University of Illinois at Chicago, Chicago, IL, USA}
\email{rkozma2@uic.edu}

\maketitle

\begin{abstract}
We prove a Central Limit Theorem (CLT) in the non-commutative setting of random matrix products where the underlying process is driven by a subshift of finite type (SFT) with Markov measure. We use the martingale method introduced by Y. Benoist and J.F. Quint in the iid setting. 
\end{abstract}

\tableofcontents

\newpage

\section{Introduction, Main Results, and Preliminaries}
\label{sec:intro}

\subsection{Main Results}
\label{sub:main_results} \hfill{}\\

Let $(X, m, T)$ be an ergodic measure preserving system, $G$ a connected semisimple real Lie group, with finite center, without nontrivial compact factors, and $F: X \overto{} G$ a measurable map. 
The associated cocycle $F_n: X \overto{} G$, $n \in \bbN$, generated by $F$ is given by
\[
	F_n(x) = F(T^{n-1}x) \cdots F(Tx)F(x).
\]
We are interested in the asymptotic behavior of the sequence $(F_n(x))_{n=1}^{\infty}$ beyond the random walk case, that has been extensively studied in \cite{Guivarch-Raugi86,LePage82,BQ_CLT16}, 
see also \cite{Bougerol-Lacroix85,Furman02HandbookCh,BQ_book}.

To gauge the asymptotic behavior of the $G$-valued sequence $(F_n(x))$ we shall use the Cartan Projection 
$$\kappa: G \overto{} \mathfrak{a}^+$$ 
taking values in the positive Weyl chamber $\mathfrak{a}^+$ 
of the Lie algebra $\mathfrak{a} = \text{Lie}(A)$ of the Cartan subgroup $A < G$, and the {\it Iwasawa cocycle} 
\[
	\sigma: G \times G/P \overto{} \mathfrak{a}.
\]
The Oseledets theorem states that under necessary integrability conditions $\log \norm{\Ad F(-)} \in L^1 (X, m)$ there is an element $\Lambda \in \mathfrak{a}^+$, called the {\it Lyapunov spectrum}, so that for $m$-a.e. $x \in X$ in $L^1(X,m)$ there is convergence 
\[
	\lim_{n \rightarrow \infty}\frac{1}{n}\cdot\kappa(F_n(x)) = \Lambda,
\]
and for every $\xi \in G/P$ where $P$ is minimal parabolic
\[
	\lim_{n \rightarrow \infty}\frac{1}{n}\cdot\sigma(F_n(x), \xi) = \Lambda.
\]
In some situations it is known that the Lyapunov spectrum is \emph{simple}, i.e. $\Lambda$ lies in the interior $\frak{a}^{++}$ of the positive Weyl chamber $\frak{a}^+$.
  
In the present work we are interested in studying the finer features of this convergence, namely in the Central Limit Theorem (CLT) that describes
the deviation of $\kappa(F_n(-))$ and $\sigma(F_n(-),\xi)$ from the expected value.
To this end we focus on the case where $(X,m,T)$ is a Markov Chain on some graph $\calG=(V,E)$ and $F$ is defined by some function $f:E\to G$ on edges.
Our goal is to prove the following multidimensional CLT.
%
%
\begin{theorem}
Let $(X,m,T)$ be a topologically mixing SFT on a graph $\calG=(V,E)$ with a Markov measure $m$, 
$G$ a semisimple real Lie group with finite center and no non-trivial compact factors, $F:X \rightarrow G$, $F(x)=f_{x_0x_1}$ associated 
with a symmetric function $f:E\to G$ with Zariski dense periodic data. 

Then the associated Lyapunov spectrum $\Lambda = \lim n^{-1}\cdot \kappa(F_n(x))$ is simple, i.e. $\Lambda \in \mathfrak{a}^{++}$, and the following $\mathfrak{a}$-valued random variables on $(X,m)$
\[
	V_n(x) = \frac{\kappa(F_n(x)) - n \cdot \Lambda}{\sqrt n}, \qquad 
	W_{\xi, n}(x)= \frac{\sigma(F_n(x),\xi) - n \cdot \Lambda}{\sqrt n} \qquad (\xi \in G/P)
\]
converge in law as $n \rightarrow \infty$ to a non-degenerate centered Gaussian distribution $N_{\Phi}$ on $\mathfrak a$ with non-degenerate covariance $\Phi$ on the vector space $\frak{a}$.
\end{theorem}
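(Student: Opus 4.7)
The plan is to follow the Benoist--Quint martingale strategy, adapted from the i.i.d.\ setting to the Markovian one. First I would work with the Iwasawa cocycle: once the CLT is established for $\sigma(F_n(x),\xi)-n\Lambda$ on $X\times G/P$, the CLT for $\kappa(F_n(x))-n\Lambda$ follows by showing that for $\xi$ in ``generic position'' (transverse to the contracting Oseledets flag) the difference $\kappa(F_n(x))-\sigma(F_n(x),\xi)$ stays bounded in $L^2$, so the two sequences are asymptotically equivalent after normalization by $\sqrt n$. The heart of the matter is therefore the $\sigma$-CLT on the skew product $T_F(x,\xi)=(Tx,F(x)\xi)$ on $X\times G/P$.

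Next I would build the analytic framework. The Markov measure $m$ on the SFT $X$ carries a Ruelle transfer operator $\calL$ with spectral gap on the space of locally Hölder functions. Lifting to $X\times G/P$ one obtains a fiberwise transfer operator $\calL_F$ acting on Hölder functions in both variables (with the standard symbolic metric on $X$ and the Riemannian metric on $G/P$). Using compactness of $G/P$ together with a Kakutani/Markov--Kakutani averaging for $\calL_F$, construct a $T_F$-invariant probability measure of the form $\mu=\int_X \delta_x\otimes\nu_x\,dm(x)$. The Zariski density of the periodic data guarantees that the semigroup generated by $\{F_n(x)\}$ at periodic orbits is Zariski dense in $G$; this is the SFT replacement for the support condition of Benoist--Quint, and I would use it first to prove simplicity of $\Lambda$ (via a Furstenberg/Goldsheid--Margulis argument applied to periodic cocycle data), and then to prove uniqueness of the stationary family $\{\nu_x\}$ together with proximality/contraction on $G/P$ along typical trajectories.

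The main technical step — and the step I expect to be the hardest — is to establish a spectral gap for $\calL_F$ on an appropriate Banach space of Hölder functions on $X\times G/P$, and to use it to solve the Poisson equation
\[
  h(x,\xi)-h\circ T_F(x,\xi)=\sigma(F(x),\xi)-\Lambda+M(x,\xi)
\]
with $h$ Hölder and $M$ a reverse martingale increment for the filtration $\calF_n=T_F^{-n}\calB$. The difficulty here is a two-scale issue: one must combine the hyperbolicity of the SFT (contraction in $x$ under $T^{-1}$) with the contraction on $G/P$ driven by random matrix products along orbits, where the driving is now Markov rather than i.i.d. I would handle this by a Lasota--Yorke type estimate for $\calL_F$ on anisotropic Hölder norms (Hölder in the unstable $x$-direction and in $\xi$, modded out by the stable $x$-direction), with the contraction factor in $\xi$ coming from the simplicity of $\Lambda$ and the $x$-Hölder contraction from the SFT. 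With the Poisson equation solved, the telescoping sum yields a reverse martingale, and Brown's martingale CLT gives convergence in law to $N_\Phi$ with covariance
\[
  \Phi(\phi,\psi)=\int \phi(M)\,\psi(M)\,d\mu.
\]

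Finally, I would prove non-degeneracy of $\Phi$ by contradiction. If $\Phi$ were degenerate along some $\phi\in\mathfrak a^*$, then $\phi\circ\sigma(F(x),\xi)-\phi(\Lambda)$ would be a Hölder coboundary on $X\times G/P$. Evaluating this coboundary along periodic orbits of $T$ would force $\phi(\kappa(F_p(x)))=p\cdot\phi(\Lambda)$ for every periodic orbit of period $p$, i.e.\ the Jordan projections of all periodic values of $F_n$ would lie in the hyperplane $\ker\phi\subset\mathfrak a$. This algebraic constraint is incompatible with Zariski density of the periodic data by a standard argument (the union of proper parabolic/Levi subvarieties cut out by such a linear relation cannot contain a Zariski dense set in $G$), giving non-degeneracy and completing the proof.
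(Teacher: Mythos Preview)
Your overall architecture matches the paper's: prove the CLT for $\sigma$ on the skew product via Brown's martingale theorem, transfer to $\kappa$ using that $\|\kappa(F_n(x))-\sigma(F_n(x),\xi)\|$ stays bounded, and deduce non-degeneracy of $\Phi$ from Zariski density of the periodic data. Where you diverge substantially is in how you center the cocycle. You propose to establish a spectral gap for a lifted transfer operator on anisotropic H\"older spaces over $X\times G/P$ and solve the Poisson equation abstractly. The paper bypasses this entirely: it writes down the centering function \emph{explicitly} as $h_0(v,\xi)=\int_B H(\xi,\zeta)\,d\check\nu^{(v)}(\zeta)$, where $H:G/Z\to\frak{a}$ is a concrete Busemann-type function satisfying $\sigma(g,\xi)+\iota(\sigma(g,\eta))=H(g\xi,g\eta)-H(\xi,\eta)$ and $\check\nu^{(v)}$ is the stationary measure for the \emph{reversed} chain. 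The martingale property then drops out of this identity plus stationarity of $\check\nu$; no spectral analysis is needed. The paper also exploits a structural simplification you do not use: because $F(x)=f_{x_0x_1}$, the stationary family $\nu_x$ depends only on $x_0\in V$, and Kakutani inducing on $Y^{(v)}=\{x_0=v\}$ yields a genuine Bernoulli random walk with law $\mu^{(v)}$ having exponential moment. This reduction imports simplicity of $\Lambda$, regularity of $\nu^{(v)}$, and non-degeneracy of $\Phi$ directly from the i.i.d.\ theory of Benoist--Quint rather than redeveloping them in the Markov setting.

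Your spectral-gap route is in fact the alternative the paper explicitly flags (Guivarc'h--Raugi / perturbation style) and declines to pursue. It should work in principle, but the Lasota--Yorke estimate you describe on $X\times G/P$ is a real project in itself, whereas the paper's Busemann-plus-Kakutani argument is essentially soft once the Bernoulli results are taken as input. What your approach would buy, if carried out, is a more self-contained treatment that does not lean on the i.i.d.\ theorem as a black box and that might generalize more readily to Gibbs measures beyond the Markov case.
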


\bigskip

\subsection{Background and Perspective} 
\label{sub:background} \hfill{}\\

\subsubsection{Classical CLT}

Classical probability theory considers the asymptotic behavior of sums of independent identically distributed (iid) real random variables $S_n = x_1 + \dots + x_n$.  Key results describing the behavior of such sums are the Law of Large Numbers (LLN), the Central Limit Theorem (CLT) and Law of Iterated Logarithms (LIL). 
The CLT gives the deviations in law, while the LIL bounds the magnitude of fluctuations in the worst cases. Note that the LLN holds in much greater generality in the context of Birkhoff's ergodic theorem. 

During the $1960$s Y. Sinai constructed Gibbs measures on transitive $C^2$ Anosov flows, while showing similar results with CLTs for geodesic flows on compact manifolds of constant negative curvature \cite{Sinai60}. 
In a related work of the 1970's M. Ratner proved a CLT for geodesic flows of functions on manifolds of variable negative curvature by representing flows by Markov partitions as introduced by R. Bowen \cite{Bowen_book}, and introducing a special flow under a function suspended above the partition \cite{RatnerCLT73}. The crux of Ratner's proof is that the CLTs for the base and the height of the suspension are independent.

\subsubsection{Non-commutative CLTs}

A natural question is whether these results can be recovered in the non-commutative setting. Let $(g_i)_{n=1}^{\infty}$ be an iid sequence of elements of non-commutative group $G$ according to Borel probability measure $\mu$, and consider the asymptotic behavior of random products $S_n = g_n \cdots g_1$, here $(S_i)_{n=1}^{\infty}$ is also known as a random walk on $G$. Of particular interest is when $G$ is a semi-simple Lie group with some reasonable added assumptions.
The analogue of the LLN for iid random matrices is a result of Furstenberg--Kesten \cite{FurstenbergKesten60}, and 
the Oseledets multiplicative ergodic theorem extends the iid case to a general ergodic situation and gave a finer description of the convergence. 
Finding deviations from the mean and showing the CLT is more involved and so far has only been studied for Random Walks.

The original conjecture for a non-commutative CLT for random walks is due to R. Bellman \cite{Bellman54}. 
H. Furstenberg and H. Kesten in \cite{FurstenbergKesten60} gave the first proof for random walks on the semigroup of positive measure with an $L^{2+\epsilon}$-integrability condition. 
\'E. Le Page \cite{LePage82} extended the result to more general random walks with a finite exponential moment condition. 
Guivarc'h--Raugi \cite{Guivarch-Raugi86} and Goldsheid--Margulis \cite{GoldsheidMargulis89} strengthened the results under finite exponential moment conditions. 
See the exposition by Bourgerol--Lacroix \cite{Bougerol-Lacroix85} and the recent monograph \cite{BQ_book} by Benoist--Quint which treats more general local fields.
The random walk case was then strengthened by Benoist--Quint \cite{BQ_CLT16} with a new martingale proof under the optimal finite second moment or $L^2$-integrability conditions,
then extended to the result to hyperbolic groups \cite{BQhypCLT}.

The above works address non-commutativity by introducing an additional dimension -- the flag variety $B=G/P$ -- and studying an associated Markov chain on $G \times G/P$,
but the approaches are different:
Guivarc'h--Raugi use perturbation theory methods, analogous to the Laplace transform method,
while Benoist--Quint adopted the martingale method, also known as Gordin's method, obtaining optimal $L^2$-integrability conditions. 

One should also point out the multidimensional aspect of the problem: if $G$ has rank greater than one, the random variables take values in the multidimensional 
vector space $\frak{a}$ that ascribes several ``lengths" to every matrix. 

\subsubsection{CLT beyond the random walk case}

Do similar limit theorems hold if the process is no longer iid but instead driven by a topological Markov chain? 
Markov chains can be modeled by random walks on graphs, which in turn can be described by subshifts of finite type (SFT) with a Markov measure $m$ (see \S \ref{sub:mixingSFT}). 
To be Markovian means that the next step of the process only depends on the current state. 
Under the mild assumption of mixing there exist unique stationary distributions to which Markov chain converges exponentially fast, independent of the initial state, that is the process loses memory of its initial state. 

It is natural to deduce the CLT for eigenvalues of products of matrices driven by Markov chains by reducing to the Bernoulli system as follows. 
A return loop to a fixed vertex $v \in V$ of the Markov chain with multiplication by group elements depends only on the directed edges along paths, and gives an induced Bernoulli random walk (see Proposition \ref{prop:Bernoulli}) for which the CLT is known. However working with this Kakutani induced system directly poses some challenges. 

The expected values of the return times and growth rates (Lyapunov exponents) are well known (see Proposition \ref{prop:induced_lambdas}).
But the Gaussian deviations from the expected values appear in both the Bernoulli CLT and the return times. It needs to be  shown that these two effects do not interfere, this approach was taken by M. Ratner in her CLT for functions in negative curvature  for geodesic flows on hyperbolic $n$-manifolds \cite{RatnerCLT73}. For the problem we consider this method appeared to be technically more involved than the approach taken in this paper; we extend the recent Martingale difference approach of Benoist--Quint \cite{BQ_CLT16} also known as Gordin's method to the Markovian setting directly. 

Knowing the induced system is Bernoulli allows us to rely on some technical results proven by Benoist--Quint and transfer them to our setting (See Theorem \ref{T:induced-RW}), including regularity of the stationary measure (see Proposition \ref{P:regularity}),  in \S \ref{sub:abstract_framework}-\ref{sub:centering} we center a cocycle and solve its cohomological equation using geometric considerations (see Proposition \ref{P:Busemann}), 
and show non-degeneracy of the Gaussian in \S \ref{sub:convergence_to_covariance} and the Lindeberg condition \S \ref{sub:lindeberg_condition}.
Then we can apply Brown's Martingale CLT, Theorem \ref{T:Brown} \cite{BrownCLT71}. Finally we extend the results to the Iwasawa cocycle $\sigma$ and the Cartan projection $\kappa$ in Theorem \ref{T:Main}. 

The third possible approach to our problem is that of operator theory, perturbation theory, and the spectral (gap) properties of the operator as in Y. Guivarc'h and A. Raugi \cite{Guivarch-Raugi86} (cf. exposition \cite{Bougerol-Lacroix85}), which we plan to expand on in future work.

\begin{table}
\begin{tabular}{c|c|c}
$(X,m,T)$	& $\varphi(x)+\varphi(Tx)+\dots$ & $F(T^{n-1}x)\cdots F(Tx)F(x)$ \\
\hline
Bernoulli shift & Classical & B--Q \cite{BQ_book,BQ_CLT16}, G--R \cite{Guivarch-Raugi86}, \dots \\
$\downarrow$	&	&	\\
Markov SFT system  & Classical & (!) \\
$\downarrow$	&	&	\\
SFT with Gibbs measure  & Bowen \cite{Bowen_book} / Ratner \cite{RatnerCLT73} & open \\
$\downarrow$	&	&	\\
Anosov Diffeomorphism of flows & Sinai \cite{Sinai60} & open \\
	&	&	\\
\end{tabular}
\caption{Natural progression of generality for commutative and non-commutative CLTs. (!) is considered in this paper. }
\end{table}

\subsection{Mixing Subshift with Markov measure} 
\label{sub:mixingSFT} \hfill{}\\

{\it 
Here we recall some constructions and facts about a class of dynamical systems 
that we call a mixing (edge) subshift with a Markov measure.
Our basic reference is \cite{Bowen_book}.}

\medskip
 
Let $\calG=(V,E)$ be a connected unoriented finite graph with vertex set $V$ and edge set $E\subset V\times V$.
An unoriented graph has a symmetric set of edges $E$: for every directed
edge $e=(u,v)\in E$ the reverse edge $\bar{e}=(v,u)$ is also in $E$. 
A sequence of vertices $(v_0,\dots, v_k)$ is a \emph{path} in the graph $\calG$ if $(v_{i-1},v_i)\in E$ for all $1\le i\le k$. 
One can also consider one sided infinite paths $(v_0,v_1,\dots)$ with $v_0=v$ or two-sided infinite paths $(v_i)_{i\in\bbZ}$ in $\calG$
with $v_0=v$.

Let $X=\setdef{x\in V^\bbZ}{ (x_i,x_{i+1})\in E,\ i\in\bbZ}$ and $X_+=\setdef{x\in V^\bbZ}{ (x_i,x_{i+1})\in E,\ i\ge 0}$
denote the compact spaces of all infinite two-sided and one-sided paths in the graph $\calG$.
The shift $T$ given by $(Tx)_i=x_{i+1}$ acts on both of those spaces. $T$ is a homeomorphism of $X$ and a non-invertible continuous map of $X_+$.

Given a vertex $v\in V$ let $Y^{(v)} = X \cap \mathcal P^{(v)}$ be the space of infinite paths rooted at $v$.
A \emph{closed loop} based at $v$ is a finite path $(v_0,v_1,\dots,v_n)$ in $\calG$ 
such that $v_0=v_n=v$. Its \emph{length} is $n$.
Denote by $\Omega^{(v)}$ the collection of all closed loops based at $v$, and $\mathcal L^{(v)} \subset \Omega^{(v)}$ the space of \emph{first return} loops at $v$,
meaning loops with $v_i \ne v_0$ for $0<i<n$. The length of $\ell \in \mathcal L^{(v)}$ is denoted $|\ell|=n$.
The graph $\calG$ is said to be \emph{connected} if every two vertices $v,v'\in V$
can be connected by a path in $\calG$. Assuming $\calG$ is connected, we say it is \emph{aperiodic} 
if for some $v\in V$ the greatest common divisor of all lengths of closed loops based at $v$ is one. 

It is a standard fact that $\calG$ is connected and aperiodic iff  
$T$ is \emph{topologically mixing} on $X$ (and on $X_+$), meaning that for
any two non-empty open sets $A,B\subset X$ there exists $n_0$ so that for $n\ge n_0$
one has $T^{-n}A\cap B\ne\emptyset$.
The dynamical system $(X,T)$ is often referred to as a Subshift of Finite Type, or SFT. 

\bigskip

Let $P=(p_{uv})$ be a $|V|\times |V|$ matrix, indexed by $v\in V$,  
with the following properties:
for every $v\in V$
\[
	\sum_{u\in V}p_{vu}=1.
\]
and $p_{vu}\ge 0$ for all $u,v\in V$ with $p_{vu}>0$ iff $(vu)\in E$.
We think of $p_{vu}$ as defining a Markov chain on $V$, where  
starting at a vertex $v\in V$ the probability of moving to vertex $u\in V$ is given by $p_{vu}$,
independently of previous steps. 
These movements are constrained to the graph $\calG=(V,E)$, because $P_{vu}=0$ if $(vu)\not\in E$. 
Powers $P^n$ of $P$ describe the probabilities of making $n$-step moves:
the $vu$-entry of $P^n$ gives the probability of moving from $v\in V$ to $u\in V$ in $n$-steps.

Perron--Frobenius theorem implies that under our aperiodicity assumption there exists $n_0\in\bbN$, so that for $n\ge n_0$
we have $p^n_{vu}>0$ for all $v,u\in V$, and that there exists a unique \emph{stationary} measure $\pi\in\Prob(V)$ satisfying
\[
	\pi(v)=\sum_{u\in V} \pi(u)\cdot p_{uv}
\]
One can now define the associated \emph{Markov measure} $m$ on $X$ by the formula  
for the cylindrical set 
\[
	m \setdef{x\in X}{x_i=v_0,\ \dots, x_{i+k}=v_k}=\pi(v_0)\cdot p_{v_0v_1}\cdots p_{v_{k-1}{v_k}}
\] 
for all $i\in \bbZ$ and $k\in\bbN$.
This formula allows to extend $m$ to a Borel probability measure on $X$ (Kolmogorov extension theorem).
This probability measure $m$ is $T$-invariant, and 
the system $(X,m,T)$ is a mixing ergodic system.
The Markov measure $m_+$ on the one sided shift $(X_+,m_+,T)$ is defined using the same formula.

\bigskip

The reverse time transformation $T^{-1}$ on $(X,m)$ corresponds to another Markov chain that has
transition probabilities $\check{P}=(\check p_{uv})$ and stationary measure $\check\pi$,
where
\[
	\check{p}_{vu}=\frac{\pi(v)}{\pi(u)} p_{uv}\qquad (u,v\in V).
\]
The reverse Markov chain describes the other one-sided shift $(X_-,m_-,T^{-1})$.

\bigskip

Let $G$ be a connected reductive real Lie group with finite center, and let $f: E\to G$
be a map.
If $(v_0,v_1,\dots,v_k)$ is a vertex path in $\calG$ we write
\[
	f_{v_0v_1\dots v_k}= f_{v_{k-1}v_k} \cdots f_{v_1v_2}f_{v_0v_1}.
\]
Fix a vertex $v\in V$ and define the set $\operatorname{Per}^{(v)}$ of all $v$-\textbf{based periods} 
to be the set of elements
\begin{equation}
	\operatorname{Per}^{(v)}=\setdef{ f_{v_0,\dots,v_k} \in G }{ (v_0,v_1,\dots,v_k) \in \Omega^{(v)} },
\end{equation} 
with $\mathcal L^{(v)} \subset \operatorname{Per}^{(v)} \subset \Omega^{(v)}$.
\begin{assumption}\label{standing_assumptions}
	Hereafter we shall assume that $\calG$ is connected and aperiodic, and we will assume that 
	a function $f: E\to G$ satisfies:
	\begin{description}
		\item[Symmetry] $f_{uv}=f_{vu}^{-1}$ for every $(u,v)\in E$.
		\item[Periodic data] the group generated by $\operatorname{Per}^{(v)}$ is Zariski dense in $G$.
	\end{description} 
\end{assumption}
Note that the second assumption does not depend on the choice of the base point $v$.
Indeed, assuming the $v$-periods generate a Zariski dense subgroup of $G$, take any other vertex $v'\in V$ 
and consider a vertex path $v_0,v_1,\dots, v_k$ with $v_0=v$ and $v_k=v'$ (such paths exist under our assumption).
Then we have
\[
	\operatorname{Per}^{{(v')}}\supset f_{v_0v_1\dots v_k}^{-1}\operatorname{Per}^{(v)}f_{v_0v_1 \dots v_k}
\]
and therefore the group generated by $v'$-periods is also Zariski dense in $G$.
Thus we can talk about Zariski density of the \emph{periodic data} without specifying a vertex.

\bigskip

\subsection{Semi-simple Lie groups} 
\label{sub:semi_simple} \hfill{}\\

Let $G=\mathbf{G}(\bbR)$ be the $\bbR$ points of a semi-simple Zariski connected algebraic group $\mathbf{G}$ 
defined over $\bbR$. We can think of $G$ as a Zariski closed subgroup of $\SL_n(\bbR)$ for some $n$.
Let $K<G$ be a maximal compact subgroup of $G$, let $\frak{g}$ the Lie algebra of $G$, $\frak{k}$
the Lie algebra of $K$, and $\frak{g}=\frak{k}\oplus\frak{s}$ sum of two subalgebras, 
where $\frak{s}=\frak{k}^\perp$ with respect to the Killing form. 
Choose a Cartan subalgebra $\frak{a}<\frak{s}$ and let $A<G$ be the corresponding Zariski connected subgroup. 
Let $\Delta\subset \frak{a}^*$ be the set of restricted roots. 
Let $\frak{z}=\setdef{z\in\frak{g}}{\forall x\in\frak{a}\ [x,z]=0}$ the Lie algebra
of the centralizer $Z=C_G(A)$ of $A$ in $G$.
We have the root decomposition
\[
	\frak{g}=\frak{z}\oplus\bigoplus_{\alpha\in\Delta} \frak{g}_\alpha
\] 
where $\frak{g}_\alpha=\setdef{y\in\frak{g}}{\forall x\in\frak{a}\ [x,y]=\alpha(x)y}$ are the root spaces.

Let $\Delta^+\subset \Delta$ be a choice of positive roots, 
and denote by $\Pi\subset \Delta^+$ the simple ones.  
We denote by 
\[
	\frak{a}^+:=\setdef{x\in \frak{a}}{\forall \alpha\in \Pi,\ \alpha(x)\ge 0}
\]
the \textbf{positive Weyl chamber}, and by $\frak{a}^{++}$ its interior, i.e. 
\[
	\frak{a}^{++}:=\setdef{x\in \frak{a}}{\forall \alpha\in \Pi,\ \alpha(x)> 0}.
\]
Then $\frak{n}=\bigoplus_{\alpha\in\Delta^+}\frak{g}_{\alpha}$ is a nilpotent subalgebra, let $N<G$ denote
the connected algebraic subgroup with Lie algebra $\frak{n}$. 
$N$ is a maximal unipotent subgroup in $G$.
The \textbf{minimal parabolic subalgebra} of $\frak{g}$ is $\frak{p}=\frak{z}\oplus \frak{n}$.
The normalizer of $\frak{p}$ in $G$ under the adjoint representation
$P=N_G(\frak{p})$ is the \textbf{minimal parabolic subgroup} of $G$, its Lie algebra is $\frak{p}$.
We have the following product decompositions:
\[
	G=K\cdot P,\qquad P=M\cdot\exp(\frak{a})\cdot N
\]
where $M:=Z\cap K$ is the centralizer of $A$ in $K$.
Being a compact extension of a solvable group, $P$ is a closed \textbf{amenable subgroup} of $G$.
The compact group $K$ acts transitively on $G/P$, the space $G/P$ is called \textbf{the flag variety} of $G$. For brevity denote $B = G/P$, it plays an essential role in our discussion.

Since $Z<P$ we have the natural projection $\pr:G/Z\to B$. The \textbf{Weyl group} 
$W=N_G(A)/C_G(A)=N_G(A)/Z$ acts on the roots $\Delta$ and therefore on the Weyl chambers.
The $W$-action on the Weyl chambers is simply transitive. The element $\wlong\in W$ that maps 
the positive Weyl chamber $\frak{a}^+$ to its opposite $w_0(\frak{a}^+)=-\frak{a}^+$
is called the \textbf{long element}. 
The \textbf{opposition involution} is the map $\iota$ of $\frak{a}$ obtained by composing $t\mapsto -t$ with $\wlong$ 
\[
	\iota(t)=\wlong(-t)\qquad (t\in\frak{a}).
\]
This is an involution of $\frak{a}$ that preserves the positive Weyl chamber $\frak{a}^+$. 

The Weyl group acts on $G/Z$ from the right, commuting with
the transitive left action of $G$. The composition 
\[
	\check\pr=\pr\circ \wlong:\ G/Z\overto{\wlong}G/Z\overto{\pr}B 
\]  
is another $G$-equivariant projection $G/Z\to B$.
The map 
\[
	(\check\pr,\pr):\ G/Z\overto{} B\times B 
\]
is a continuous $G$-equivariant embedding whose image is an open and dense subset of $B\times B$
consisting of pairs of flags $(\xi,\eta)$ that are ``\textbf{in general position}".

The \textbf{Cartan decomposition} of $G$ is given by
\[
	G=K\cdot \exp(\frak{a}^+)\cdot K.
\]
In other words, every $g\in G$ can be written as $g=k_1\exp(x) k_2$ with $k_1,k_2\in K$ 
and $x\in \frak{a}^+$. The latter component $x\in \frak{a}^+$ is uniquely determined by $g\in G$, 
and we define the \textbf{Cartan projection} 
\begin{equation}\label{eq:Cartan-proj}
	\kappa:G\ \overto{}\ \frak{a}^+\qquad\textrm{by}\qquad g\in K\exp(\kappa(g))K.
\end{equation}
The \textbf{Iwasawa decomposition} $G=K\exp(\frak{a})N$
gives rise to the \textbf{Iwasawa cocycle}
\begin{equation}\label{eq:Iwasawa}
	\sigma:\ G\times B\ \overto{}\ \frak{a}
\end{equation}
that is defined as follows: for $g\in G$ and $\xi\in G/P$ let $k\in K$ be such that $\xi=kP$,
and let $\sigma(g,\xi)$ be the unique element $a\in \frak{a}$ for which $gk\in K \exp(a) N$.
It is a continuous map $\sigma:G\times B\to \frak{a}$ that satisfies the cocycle equation
\[
	\sigma(g_1g_2,\xi)=\sigma(g_1,g_2\xi)+\sigma(g_1,\xi).
\]
(e.g. \cite{BQ_book}*{Lemma 6.29}).

The flag variety $G/P$ can be embedded in a product of projective spaces as follows 
(see \cite{BQ_book}*{Lemma 6.32}). 
For each simple root $\alpha\in\Pi$ there exists an irreducible algebraic representation
\[
	\rho_\alpha:G\ \overto{}\ \GL(V_\alpha) 
\]
of highest weight $\chi_\alpha$ that is a positive integer multiple of the fundamental weight associated to $\alpha$,
and so the weights $\setdef{\chi_\alpha}{\alpha\in\Pi}$ form a basis for the dual $\frak{a}^*$.
Let $\xi_\alpha\in\bbP(V_\alpha)$  denote the $1$-dimensional $\chi_\alpha$-eigenspace (which is $P$-fixed) 
and define the map
\[
	p:G/P\ \overto{}\ \prod_{\alpha\in\Pi}\bbP(V_\alpha),\qquad gP\mapsto (\rho_\alpha(g)\xi_\alpha)_{\alpha\in\Pi}
\]
Then $p$ is a $G$-equivariant embedding of $G/P$ in the product of these projective spaces. 
Hereafter we fix this choice of $G$-representations $(\rho_\alpha, V_\alpha)$, $\alpha\in\Pi$.

\medskip

The Killing form defines a Euclidean norm $\|-\|$ on $\frak{a}$. 
With this norm $\|\kappa(g)\|={\rm dist}(gK,K)$ is the displacement of the $K$-fixed point
in the symmetric space $G/K$ by $g$. We therefore have subadditivity 
\[
	\|\kappa(g_1g_2)\|\le \|\kappa(g_1)\|+\|\kappa(g_2)\|\qquad (g_1,g_2\in G).
\]
There exist $K$-invariant Euclidean norms on $V_\alpha$ so that for $\alpha\in\Pi$, $g\in G$, $\xi\in G/P$
\[
	\chi_\alpha(\kappa(g))=\log\|\rho_\alpha(g)\|,\qquad \chi_\alpha(\sigma(g,\xi))=\log\frac{\|\rho_\alpha(g)v\|}{\|v\|}
\]
where $v\in V_\alpha-\{0\}$ is such that $\bbR v=p_\alpha(\xi)\in\bbP(V_\alpha)$ (see \cite{BQ_book}*{Lemma 6.33}).

Consider the dual $G$-representation $\rho_\alpha^*:G\overto{}\GL(V_\alpha^*)$ to $(\rho_\alpha,V_\alpha)$ equipped with the $K$-invariant Euclidean norms,
and denote by $q_\alpha:G/P\overto{} \bbP(V^*_\alpha)$ the corresponding map.
For non-zero vectors $f\in V_\alpha^*$, $v\in V_\alpha$  the value of $\ell\in \bbP(V_\alpha)$ represented by $\phi=\bbR f$,  for some 
the value is well defined,
\[
	\delta_\alpha(\ell,\phi)=\frac{|f(v)|}{\|f\|\cdot\|v\|}
\]
depends only on the projective points $\phi=\bbR f\in \bbP(V_\alpha^*)$ and $\ell=\bbR v\in\bbP(V_\alpha)$.
One can show that $\xi,\eta\in G/P$ are in general position, iff for all $\alpha\in\Pi$
\[
	\delta_\alpha(p_\alpha(\xi),q_\alpha(\eta))>0.
\]
For $\xi,\eta\in G/P$ in general position we define
\[
	\delta(\xi,\eta)=\min_{\alpha\in\Pi} \delta_\alpha(p_\alpha(\xi),q_\alpha(\eta)).
\] 
The following statement is a higher-rank generalization of the Busemann function.
We view $G/Z$ as a subset of $G/P\times G/P$ consisting of pairs $(\xi,\eta)$ in general position.
\begin{prop}\label{P:Busemann}
	There exists a continuous function $H:G/Z\overto{} \frak{a}$ such that for any pair $(\xi,\eta)$ in general position in $G/P\times G/P$
	and for any $g\in G$ we have 
	\[
		\sigma(g,\xi)+\iota(\sigma(g,\eta))=H(g\xi,g\eta)-H(\xi,\eta).
	\]
	The map $H$ takes values in the positive Weyl chamber $H(\xi,\eta)\in\frak{a}^+$ and satisfies
	\[
		H(\eta,\xi)=\iota(H(\xi,\eta)).
	\]
	There exists a constant $C$ so that 
	\[
		e^{\|H(\xi,\eta)\|}\le \delta(\xi,\eta)^{-C}.
	\]
\end{prop}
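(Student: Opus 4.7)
The plan is to define $H$ via the $\delta_\alpha$-pairings associated with the fundamental representations $(\rho_\alpha,V_\alpha)$ and derive the cocycle relation from the fact that the Iwasawa cocycle computes logarithms of norms of vectors in $V_\alpha$ and in $V_\alpha^*$. Since the highest weights $\{\chi_\alpha\}_{\alpha\in\Pi}$ form a basis of $\mathfrak a^*$, an element of $\mathfrak a$ is determined by its values under $\chi_\alpha$. We therefore declare
\[
	\chi_\alpha\bigl(H(\xi,\eta)\bigr) \;=\; -\log \delta_\alpha\bigl(p_\alpha(\xi),q_\alpha(\eta)\bigr)\qquad(\alpha\in\Pi),
\]
which is well defined and continuous on the open set of pairs in general position, i.e.\ on $G/Z\subset B\times B$.

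The main computation verifies the cocycle identity. Fix $\alpha\in\Pi$ and write $p_\alpha(\xi)=\bbR v$, $q_\alpha(\eta)=\bbR f$ with $v\in V_\alpha$, $f\in V_\alpha^*$. The defining identity of the dual representation gives $(\rho_\alpha^*(g)f)(\rho_\alpha(g)v)=f(v)$, so
\[
	\delta_\alpha(g\xi,g\eta) \;=\; \frac{|f(v)|}{\|\rho_\alpha(g)v\|\cdot\|\rho_\alpha^*(g)f\|}
	\;=\; \delta_\alpha(\xi,\eta)\cdot\frac{\|v\|\cdot\|f\|}{\|\rho_\alpha(g)v\|\cdot\|\rho_\alpha^*(g)f\|}.
\]
Taking logarithms and using the formula $\chi_\alpha(\sigma(g,\xi))=\log(\|\rho_\alpha(g)v\|/\|v\|)$, together with the identification $V_\alpha^*\cong V_{\alpha^*}$ of irreducible representations of highest weight $\chi_{\alpha^*}=-w_{\operatorname{long}}\chi_\alpha$ (with $K$-invariant norms chosen compatibly), one obtains
\[
	\chi_\alpha\bigl(\sigma(g,\xi)\bigr)+\chi_{\alpha^*}\bigl(\sigma(g,\eta)\bigr)
	\;=\; \log\delta_\alpha(\xi,\eta)-\log\delta_\alpha(g\xi,g\eta).
\]
Since $\chi_\alpha\circ\iota=\chi_{\alpha^*}$ (follows from $\iota=w_{\operatorname{long}}\circ(-\operatorname{id})$ and $w_{\operatorname{long}}(\chi_\alpha)=-\chi_{\alpha^*}$), the left-hand side equals $\chi_\alpha(\sigma(g,\xi)+\iota(\sigma(g,\eta)))$. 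Applying this for every $\alpha\in\Pi$ and using that the $\chi_\alpha$ separate points of $\mathfrak a$ yields the claimed identity
\[
	\sigma(g,\xi)+\iota(\sigma(g,\eta)) \;=\; H(g\xi,g\eta)-H(\xi,\eta).
\]

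The remaining three assertions are then short. Since $\delta_\alpha(\ell,\phi)=|f(v)|/(\|f\|\|v\|)\le 1$ by Cauchy--Schwarz, each $\chi_\alpha(H(\xi,\eta))\ge 0$; and as the $\chi_\alpha$ are positive multiples of the fundamental weights dual to the simple coroots, this exactly characterizes membership in $\mathfrak a^+$. For the symmetry $H(\eta,\xi)=\iota(H(\xi,\eta))$, apply $\chi_\alpha$ to both sides: the right-hand side gives $-\log\delta_{\alpha^*}(\xi,\eta)$, while the left gives $-\log\delta_\alpha(\eta,\xi)$; the equality $\delta_\alpha(\eta,\xi)=\delta_{\alpha^*}(\xi,\eta)$ follows from the canonical isomorphism $V_\alpha^*\cong V_{\alpha^*}$ and the compatible choice of $K$-invariant norms. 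Finally, expanding $H$ in the basis dual to $\{\chi_\alpha\}$ and using $\chi_\alpha(H(\xi,\eta))\le -\log\delta(\xi,\eta)$ gives $\|H(\xi,\eta)\|\le C\cdot(-\log\delta(\xi,\eta))$ for a constant $C$ depending only on $G$, which is the required inequality after exponentiation.

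The main technical obstacle is not the cocycle identity itself but the bookkeeping around the opposition involution: matching the representations $(\rho_\alpha^*,V_\alpha^*)$ with $(\rho_{\alpha^*},V_{\alpha^*})$ and choosing $K$-invariant norms consistently on both sides so that the identity $\chi_\alpha\circ\iota=\chi_{\alpha^*}$ genuinely implements $\delta_\alpha(\eta,\xi)=\delta_{\alpha^*}(\xi,\eta)$. Once this identification is pinned down (which is a standard but slightly delicate fact about dual fundamental representations of semisimple groups), all the remaining steps are linear algebra and Cauchy--Schwarz.
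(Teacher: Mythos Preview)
The paper states Proposition~\ref{P:Busemann} without proof (only the $\SL_n$ illustration in \S\ref{sub:SLnR} follows it), so there is no argument in the paper to compare against. Your approach---defining $H$ by $\chi_\alpha(H(\xi,\eta))=-\log\delta_\alpha(p_\alpha(\xi),q_\alpha(\eta))$ and verifying the identity componentwise in the fundamental representations---is the natural and standard one, and your derivations of the cocycle identity, the symmetry $H(\eta,\xi)=\iota(H(\xi,\eta))$, and the norm bound $e^{\|H\|}\le\delta^{-C}$ are correct (with the bookkeeping on $V_\alpha^*\cong V_{\alpha^*}$ that you rightly flag as the only delicate point).

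There is, however, a genuine error in your argument that $H(\xi,\eta)\in\frak{a}^+$. You deduce $\chi_\alpha(H(\xi,\eta))\ge 0$ for all $\alpha\in\Pi$ from Cauchy--Schwarz, which is fine, but then claim this ``exactly characterizes membership in $\frak{a}^+$.'' It does not: the positive Weyl chamber is defined by $\alpha(x)\ge 0$ for the \emph{simple roots} $\alpha\in\Pi$, whereas the condition $\chi_\alpha(x)\ge 0$ for the fundamental weights cuts out the (strictly larger) closed cone spanned by the simple coroots. Concretely, in $\SL_3(\bbR)$ take $\xi$ the standard flag and $\eta=(U_1,U_2)$ with $U_1=\bbR e_3$, $U_2=\operatorname{span}(e_3,e_2+\epsilon e_1)$; one computes $\delta_1=(1+\epsilon^2)^{-1/2}$ and $\delta_2=1$, hence $H=(t_1,t_2,t_3)$ with $t_1=\tfrac12\log(1+\epsilon^2)$, $t_3=0$, $t_2=-t_1<0=t_3$, so $H\notin\frak{a}^+$. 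Thus the assertion $H\in\frak{a}^+$ as stated is not recoverable from your definition of $H$; what your Cauchy--Schwarz argument actually yields is that $H$ lies in the dual cone $\{x:\chi_\alpha(x)\ge0,\ \alpha\in\Pi\}$. Since the applications of the proposition in the paper (Propositions~\ref{P:regularity} and~\ref{prop:centering}) only use the cocycle identity and the norm estimate, this discrepancy is harmless for the paper's purposes, but you should either correct the target of the positivity claim or note explicitly that the stated inclusion into $\frak{a}^+$ does not hold for this $H$.
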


\medskip

\subsection{The case of $\SL_d(\bbR)$}
\label{sub:SLnR} \hfill{}\\
	We next describe the general classical framework focusing for concreteness on the case of $G = \SL_n(\bbR)$ and fix notations, first  we illustrate the above definitions. 
	
	The	Lie algebra is $\frak{sl}_n$ of traceless $n\times n$ matrices.
	We choose $K=\text{SO}(n)$ as a \emph{maximal compact subgroup} (unique up to conjugation), we have $\frak{k}=\setdef{x\in\frak{sl}_n(\bbR)}{x+x^t=0}$
	and $\frak{s}=\setdef{y\in\frak{sl}_n(\bbR)}{y=y^t}$. 
	Cartan subalgebra
	\[
		\frak{a}=\setdef{\diag(t_1,\dots,t_n)}{\sum t_i=0}
	\] 
	the \emph{roots} are $\Delta=\setdef{\alpha_{ij}:\frak{a}\to\bbR}{1\le i\ne j\le n}$,
	where 
	\[
		\alpha_{ij}(\diag(t_1,\dots,t_n))=t_i-t_j.
	\]
	We can take $\Delta^+=\setdef{\alpha_{ij}}{1\le i<j\le n}$ to be \emph{positive roots},
	in which case the set of \emph{simple roots} is $\Pi=\setdef{\alpha_{i,i+1}}{1\le i\le n-1}$
	and the \emph{positive Weyl chamber} is
	\[
		\frak{a}^+=\setdef{\diag(t_1,\dots,t_n)}{t_1\ge t_2\ge \dots\ge t_n,\ \sum t_i=0}
	\]
	with the \emph{interior} $\frak{a}^{++}$ being given by the strict inequalities
	\[
		\frak{a}^{++}=\setdef{\diag(t_1,\dots,t_n)}{t_1> t_2> \dots> t_n,\ \sum t_i=0}
	\]
	\emph{Cartan subgroup} $A=\exp(\frak{a})$ is the connected diagonal subgroup 
	\[
		A=\setdef{\diag(e^{t_1},\dots,e^{t_n})}{\sum t_i=0},
	\]
	and
	\[
		M=\setdef{\diag(\epsilon_1,\dots,\epsilon_n)}{\epsilon_i=\pm1,\ \prod \epsilon_i=1}
	\] 
	and $Z=\setdef{\diag(a_1,\dots,a_n)}{\prod a_i=1}$ is the full diagonal subgroup.
	The nilpotent algebra $\frak{n}=\setdef{x\in\frak{sl}_n(\bbR)}{x_{ij}=0,\ i\le j}$ of strictly upper triangular matrices 
	is the Lie algebra of the unipotent subgroup
	\[
		N=\setdef{(a_{ij})}{a_{ii}=1,\ a_{ij}=0\ {\textrm{for}\ i<j} }.
	\] 
	consisting of upper triangular matrices with $1$ on the diagonal.
	The group $P=MAN$ given by $P=\setdef{g\in \SL_n(\bbR)}{g_{ij}=0,\ i>j}$ is the \emph{minimal parabolic} subgroup.
	The \emph{flag variety} 
	\[
		G/P=\setdef{(V_1,V_2,\dots,V_{n-1})}{V_i\subset V_{i+1},\  V_i\in\Gr_{n,i}}
	\]
	is the space of nested vector subspaces of $\bbR^n$.
	The space
	\[
		G/Z=\setdef{(\ell_1,\dots,\ell_n)}{\ell_1\oplus\cdots \oplus \ell_n=\bbR^n}
	\]
	is the space of splittings of $\bbR^n$ into $n$ one-dimensional subspaces.
	The \emph{Weyl group} $W$ is the symmetric group $S_n$ acting on $G/Z=\{(\ell_1,\dots,\ell_n)\}$
	by permuting the lines. 
	The \emph{long element} $\wlong=(n,n-1,\dots,2,1)$ is the order reversing permutation. 
	The projection maps $\pr,\check\pr:G/Z\to B$ are given by
	\[
		\begin{split}
			\pr(\ell_1,\dots,\ell_n)&=(V_1,V_2,\dots, V_{n-1})\qquad V_k=\oplus_{i=1}^k \ell_i\\
			\check\pr(\ell_1,\dots,\ell_n)&=(U_1,U_2,\dots, U_{n-1})\qquad U_k=\oplus_{i=n-k+1}^n \ell_i.
		\end{split}
	\]
	Two flags $\xi=(V_1,V_2,\dots,V_{n-1})$ and $\eta=(U_1,U_2,\dots,U_{n-1})$ are in \emph{general position}
	iff $V_{i}\oplus U_{n-i}=\bbR^n$ for each $1\le i\le n-1$.
	The \emph{Cartan decomposition} for $\SL_n(\bbR)$ is the polar decomposition, and the Cartan projection
	$\kappa(g)=\diag(t_1,\dots,t_n)$ if $e^{t_1}\ge e^{t_2}\ge \dots\ge e^{t_n}$ are the singular values
	of $g$, i.e. the eigenvalues of the positive definite matrix $(g^tg)^{1/2}$. In particular
	\[
		t_1=\log\|g\|,\qquad t_1+t_2+\dots+t_k=\log\|\wedge^k g\|\qquad (1\le k\le n)
	\]
	where $\|g\|=\max \|gv\|/\|v\|$ is the operator norm on ${\rm End}(\bbR^n)$
	corresponding to the $\text{SO}(n)$-invariant Euclidean norm on $\bbR^n$,
	extended to the exterior powers.
	The \emph{Iwasawa cocycle} $\sigma:\SL_n(\bbR)\to \frak{a}$ can be described by 
	\[
		\sigma(g,\xi)=\left(\log\|gv_1\|, \log\frac{\|gv_1\wedge gv_2\|}{\|g v_1\|},
			\dots, \log\frac{\|gv_1\wedge \dots \wedge gv_n\|}{\|gv_1 \wedge \cdots\wedge gv_{n-1}\|}\right)
	\] 
	where $\xi=(\bbR v_1,\bbR v_1\oplus \bbR v_2,\dots, \oplus_{i=1}^n \bbR v_k)$ and $v_1,\dots,v_n$ are orthonormal.
	
	The Weyl group $W=S_n$ acts on $\frak{a}$ (and its dual $\frak{a}^*$) by permuting the components
	\[
		\pi:(t_1,\dots,t_n)\ \mapsto\ (t_{\pi(1)},\dots,t_{\pi(n)})\qquad (\pi\in S_n).
	\] 
	The \emph{opposition involution} $\iota$ of $\frak{a}^+$ is given by  
	\[
		\iota:(t_1,t_2,\dots,t_n)\ \mapsto\ (-t_n,-t_{n-1},\dots,-t_2,-t_1).
	\]
	The special representations $(\rho_\alpha,V_\alpha)$ are indexed by simple roots $\Pi$ are $\alpha_{k,k+1}$, $1\le k\le n-1$, 
	and we use the exterior powers $\left(\wedge^k,\bbR^{\binom{n}{k}}\right)$ that give the map
	\[
		p_k:G/P\ \overto{}\ \bbP\left(\bbR^{\binom{n}{k}}\right)
		\qquad
		p_k:(V_1,V_2,\dots,V_{n-1})\mapsto V_k.
	\]
	Viewing $\bbR^{\binom{n}{n-k}}$ as the dual to $\bbR^{\binom{n}{k}}$ using $\alpha\wedge\beta$ as the pairing, we have
	\[
		q_k:G/P \overto{}\ \bbP\left(\bbR^{\binom{n}{n-k}}\right)
		\qquad
		q_k:(U_1,U_2,\dots,U_{n-1})\mapsto U_{n-k}.
	\]
	Note that $(\xi,\eta)\in G/P\times G/P$ are in general position iff for all $1\le k\le n-1$
	\[
		\delta_k(\alpha,\beta)=\frac{|\alpha\wedge\beta|}{\|\alpha\|\cdot\|\beta\|}> 0
	\]
	where $p_k(\xi)=\bbR \alpha$, $q_k(\eta)=\bbR\beta$.  
	
	A pair of flags $\xi=(V_1, V_2,\dots, V_{n-1})$, $\eta=(U_1, U_2,\dots,U_{n-1})$ in general position
	corresponds to a splitting $\ell_1\oplus \dots\oplus \ell_n=\bbR^n$ where $V_i=\ell_1\oplus\dots\oplus \ell_i$
	and $U_j=\ell_{n-j+1}\oplus \dots\oplus \ell_n$. If $v_1,\dots, v_n$ is a basis with $\ell_i=\bbR v_i$, then
	the estimate is
	\[
		e^{\|H(\xi,\eta)\|}\le \left|\frac{\|v_1\|\cdots \|v_n\|}{\det(v_1,\dots,v_n)}\right|^C.
	\]
	
\subsection{General Construction}
\label{sub:general_contruction} \hfill{}\\

{\it In this section we present the framework for notation of $(X,m,T)$ as a SFT with shift $T$, $B=G/P$ minimal parabolic, 
$F: X \rightarrow G$ measurable Markovian map, and $T_F$ the skew product on $X \times B$.}

\medskip

We describe the following general construction and then specialize to the specific setting of a Markov chain $(X,m,T)$.   
Given an ergodic invertible probability measure-preserving system $(X,m,T)$ and a measurable map $F:X\overto{} G$ taking values in a semi-simple Lie group $G$,
we define $\bbZ$-cocycle $F_n:X\overto{} G$ given by 
\[
	F_n(x)=\left\{ \begin{array}{lcl}
		F(T^{n-1}x)\cdots F(Tx)F(x) & {\rm if} & n\ge 1,\\
		I & {\rm if} & n=0,\\
		F(T^nx)^{-1}\cdots F(T^{-1}x)^{-1} & {\rm if} & n<0.
	\end{array}\right.
\]
It satisfies $F_1(x)=F(x)$ and $F_{n+k}(x)=F_k(T^nx)F_n(x)$ for all $n,k\in\bbZ$.

Consider the map of $X\times B$, where $B=G/P$ is the flag variety, given by
\[
	T_F(x,\xi)=(Tx,F(x).\xi),\qquad T_F^n(x,\xi)=(T^nx, F_n(x).\xi) 
\] 
where $x\in X$, $\xi=gP\in B$, $n\in\bbZ$.
Assume $F$ is integrable, i.e. $\log\|\Ad F\|\in L^1(X,m)$. Then Oseledets theorem states that there exists element $\Lambda\in\frak{a}^+$,
called the \emph{Lyapunov spectrum} of $(X,m,T)$ and $F$, so that there is an a.e. convergence
\[
	\Lambda=\lim_{n\to\infty} \frac{1}{n}\kappa(F_n(x)),\qquad \iota(\Lambda)=\lim_{n\to\infty} \frac{1}{n}\kappa(F_{-n}(x)).
\]
Assume moreover, that $F$ has a \emph{simple Lyapunov spectrum}, i.e. $\Lambda\in \frak{a}^{++}$ is in the interior of the positive Weyl chamber.
(This is the case in our setting).
Then, Oseledets theorem further provides measurable $T_F$-equivariant maps 
\[
	\eta_+:X\overto{} B,\qquad \eta_-:X\overto{} B
\]
with $(\eta_-(x), \eta_+(x))$ in general position, such that for a.e. $x\in X$
\[
	\begin{split}
		\lim_{n\to\infty} &\frac{1}{n} \sigma(F_n(x),\xi)=\Lambda\\
		\lim_{n\to\infty} &\frac{1}{n} \sigma(F_{-n}(x),\zeta)=\iota(\Lambda)
	\end{split}
\]
whenever $\xi\in B$ is in general position with respect to $\eta_+(x)$, and 
$\zeta\in B$ is in general position with respect to $\eta_-(x)$.
Furthermore, the map $\eta_+(x)$ depends only on the values of $F\circ T^n$ for $n\ge 0$,
and $\eta_-(x)$ is measurable with respect to $F\circ T^n$ for $n< 0$.
Denote these $\sigma$-algebras by
\[
	\calB_{0,\infty} = \sigma( F \circ  T^n, n \ge 0), \quad
	\calB_{-\infty,-1} = \sigma( F \circ  T^n, n < 0)
\]
and let $(X_+,m_+)$ denote the quotient of $(X,m)$ corresponding to $\calB_{0,\infty}$
and $(X_-,m_-)$ the quotient of $(X,m)$ corresponding to $\calB_{-\infty,-1}$.
Note that $T$ defines a (typically non-invertible) ergodic measure-preserving map of $(X_+,m_+)$,
and $T^{-1}$ defines a map of $(X_-,m_-)$.
The quotient maps 
\[
	X\overto{} X_+,\qquad X_-\overto{}\Prob(B) 
\]
give rise to the disintegration of $m$ with respect to $m_+$ (and that of $m$ with respect to $m_-$), that can be stated as
\[
	m=\int_{X_+} \mu_y\dd m_+(y)\qquad {\rm where}\qquad \mu_y\in\Prob(X),\qquad \mu_y(\pr^{-1}(\{y\}))=1
\]
for $m_+$-a.e. $y\in X_+$. 
\medskip

In \cite{Bader-Furman:ICM} the following notion of \emph{stationary measures} is developed.
These stationary measures are measurable maps
\[
	\nu:X_+\overto{} \Prob(B),\qquad \check{\nu}:X_-\overto{} \Prob(B)
\]
that describe the distribution of $\eta_-$ as follows
\[
	\nu_y=\int_{X} \delta_{\eta_-(x)}\dd\mu_y(x)\qquad (y\in X_+).
\]
In probabilistic terms we can write
\[
	\nu_y=\bbE(\eta_-(x) \mid \calB_{0,\infty}),\qquad \check\nu=\bbE(\eta_+(x)\mid \calB_{-\infty,-1}).
\]
The stationary measures $\nu:X_+\to\Prob(B)$ allow to define a $T_F$-invariant probability measure on
the skew-product $X_+\times B$ by
\[
	\frak{m}_+=\int_{X_+} \delta_y\otimes \nu_y\dd m_+(y)
\]
and similarly for $T_F^{-1}$ acting on $X_-\times B$.
The system $(X_+,m_+,T)$ is ergodic an measure-preserving, but is not invertible, and 
the same applies to the extension $X_+\times B,\frak{m}_+,T_F)$. 
However, one can create natural extensions of these systems as in the diagram
\[
	\begin{tikzcd}  
		(X\times B,\frak{m},T_F) \ar[d, "\pr_1"] \ar[rr, "\pi\times\id"] & & 
			(X_+\times B,\frak{m}_+,T_F) \ar[d, "\pr_1"] \\ 
	    (X,m,T) \ar[rr, "\pi"] & & (X_+,m_+,T).
	\end{tikzcd}
\]
There is a similar construction for $(X_-,m_-,T^{-1})$.

A major distinction between classical ergodic theorem for scalar functions (Birkhoff), 
and the multiplicative ergodic theorem for $G$-valued functions (Oseledets) is that the 
characteristic numbers given by the Lyapunov spectrum $\Lambda$ do not have an explicit integral formula in terms of $F:X\overto{} G$.
However, such a formula can be written using the stationary measures: 
\begin{equation}\label{e:Lya-via-stat}
	\begin{split}
	\Lambda&=\int_X \sigma(F(x),\eta_-(x))\dd m(x)=\int_{X_+}\int_B \sigma(F(y),\xi)\dd\nu_y(\xi)\dd m_+(y)\\
	&=\int_{X_+\times B} \sigma(F(y),\xi)\dd\frak{m}_+(y,\xi)
	\end{split}
\end{equation}
These are general constructions that take a simpler form in the settings that we focus on here.

\medskip

The familiar setting is that of Random Walks. 
In this case $(X,m,T)$ is a Bernoulli system and $F(x)$ depends on a single coordinate, making 
$F(x), F(Tx),\dots$ iid $G$-valued random variables. In this case the measure $\nu\in\Prob(B)$ does not depend on $y\in X_+$ at all,
and is stationary for the law $\mu\in\Prob(G)$ of the random walk 
\[
	\mu*\nu=\nu.
\]
The stationary measure $\check\nu$ would correspond to the law of $\check\mu$ which is the image of $\mu$ under $g\mapsto g^{-1}$.

\medskip

In this paper we focus on the intermediate case of a Markov chain, where the underlying dynamical system is  
the space of bi-infinite trajectories in the aperiodic connected graph $\calG=(V,E)$ given by
$X=\setdef{x\in V^\bbZ}{(x_i,x_{i+1})\in E,\ i\in\bbZ}$ with the shift transformation $T:X\overto{} X$, $(Tx)_i=x_{i+1}$
and the Markov measure $m$.
Moreover, we consider a map $F:X\overto{} G$ which determined by the edge transitions:
\[
	F(x)=f_{x_0x_1}.
\]
Note that in this setting $F_n(x)=f_{x_0 x_1\dots x_n}$ -- the product of $G$-elements along the first $n$-segments of the $x$ path.  

The $\sigma$-algebra $\calB_{0,\infty}$ is the $\sigma$-algebra of events determined by $x_0,x_1,\dots$, and so
the space 
\[
	X_+=\setdef{x=(x_i)_{i=0}^\infty}{ x_i\in V,\ (x_i,x_{i+1})\in E}
\] 
of one-sided trajectories in the graph $\calG=(V,E)$ with the Markov measure $m_+$.
The quotient $\pi:X\overto{} X_+$ is just the projection that forgets coordinates $x_i$ with $i<0$.

To emphasize that the skew-product transformation $T_F$ associated with such $F$ depends only on $f_{x_0 x_1}$ we write $T_f$
\[
	T_f:(x,\xi)\mapsto (Tx,f_{x_0x_1}.\xi).
\] 
Note that $T_f$ can viewed as a Markov Chain (on the infinite space $V\times B$, given by the transition 
\[
	(u,\xi)\mapsto (v, f_{uv}.\xi)\qquad\textrm{with\ probability}\qquad p_{uv}.
\]
The stationary measure $\nu_x$, that potentially could depend on coordinates $x_i$ with $i\ge 0$,
actually depends on $x_0\in V$ alone, and might be thought of as a measure on the product
\[
	\nu\in \Prob(V\times B)
\] 
with $\nu_v\in\Prob(B)$ for each $v\in V$. 
This measure $\nu$ is stationary for the Markov chain on $V\times B$ defined above.
The measure $\check\nu$ corresponds to the time reversal of this Markov chain.  
Formula (\ref{e:Lya-via-stat}) takes the form
\begin{equation}\label{e:Lya-4markov}
	\Lambda=\sum_{(u,v)\in E} \pi(u)\cdot p_{uv}\cdot \int_{B}\sigma(f_{uv},\xi)\dd \nu^{(u)}(\xi).
\end{equation}
The reverse Markov chain, corresponding to the inverse map $T^{-1}$ on $(X,m)$ gives
\begin{equation}\label{e:Lya-check4markov}
	\iota\Lambda=\sum_{(u,v)\in E} \check\pi(v)\cdot \check{p}_{vu}\cdot \int_{B}\sigma(f_{uv}^{-1},\zeta)\dd \check\nu^{(v)}(\zeta).
\end{equation}

\medskip

\section{Kakutani Induced System}
\label{sec:kakutani} \hfill{}\\

{\it In this section we establish properties of Kakutani induced systems which are summarized in Theorem \ref{T:induced-RW}. }

\medskip

Let $(X, T, m)$ be an ergodic probability measure preserving system. Fix a subset $Y \subset X$ with $m(Y) > 0$, and for $x \in X$ consider the trajectory $(T^i x)_{i\in\bbN}$. 
Since $X_0=\bigcap_{k=1}^\infty T^{-k}(Y)$ is a $T$-invariant measurable set of positive measure, ergodicity implies that $m(X\setminus X_0)=0$. 
For every $x\in X_0$ we let $n(x)=\min\setdef{k\in\bbN}{T^k x\in Y}$ and call the measurable function $n: X \rightarrow \bN$ the first return time to $Y$. 
We have the average return time for a $Y$-visit 
\[
	\int_X n(x)\dd m(x)=\frac{1}{m(Y)}.
\]
For $m$-a.e. $x\in X$ the sequence $T^kx$ visits $Y$ repeatedly (with an average frequency $1/m(Y)$) and for large $N \rightarrow \infty$ we can write 
\begin{multline}
	x, Tx, T^2x, \dots \\
	\dots,  \underbrace{T^{n(x)}x}_{y_1}, \dots, 
		\underbrace{T^{n(x)+n(y_1)}x}_{y_2}, 
			\dots, 
		\underbrace{T^{n(x)+n(y_1)+\dots+n(y_{k-1})}x}_{y_{k}}, 
			\dots\\
	 \dots, T^{n_k(x)}x, \dots, T^N (x)
 \label{eq:snake_hbt}
\end{multline}
where $y_1=T^{n(x)}x$ the location of the first visit of the $T$-trajectory of $x$ to $Y$, and $y_{i+1}=T^{n(y_i)}y_i$ are the consequent visits, with $y_k$ being the last visit among the first $N$ elements of the trajectory.
We shall refer to the three parts of Eq. (\ref{eq:snake_hbt}) as the head, body, and tail of the trajectory. The length of the head is $h(x) = n(x) - 1$ and the tail $t_N(x) = N - n_k(x)$ where $k$ is the number of returns to $Y$ during the first $N$ iterations.

\begin{figure}[h]
\includegraphics[height=65mm]{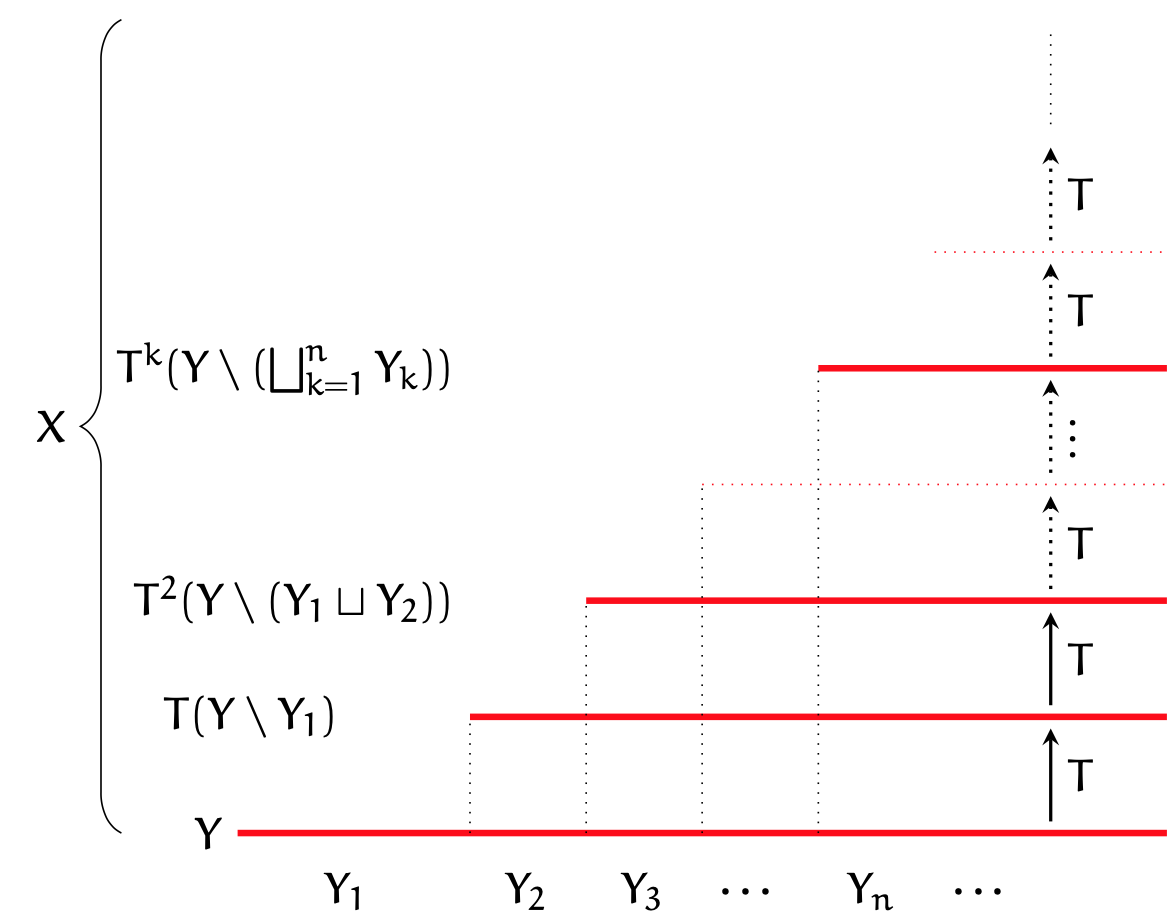}
\caption{Kakutani Tower}
\label{fig:Kakutani_tower.png}
\end{figure}

Let $(X, m, T)$ be probability measure preserving and ergodic and $Y \subset X$ with $m(Y) > 0$. 
We set $\wt m = \frac{1}{m(Y)} \cdot m|_{Y}$ to be the normalized induced measure and $\wt T: Y \rightarrow Y$ by $\wt T(y) = T^{n(y)} y$
where $n(y) = \inf \setdef{n \in \bbN}{T^n y \in Y}$ is the first return time. 
We recall the classical
\begin{lemma}[Kac] \label{lem:Kac}
	The system $(Y,\wt{m},\wt{T})$ is probability-measure preserving and ergodic, and 
	\[
	\int_{Y} n(y) \dd \wt m(y) = \frac{1}{m(Y)}.
	\]
\end{lemma}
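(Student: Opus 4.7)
The plan is to run the standard Kakutani skyscraper argument. First I would stratify $Y$ by the value of the return time, setting $Y_k = \setdef{y\in Y}{n(y)=k}$ for $k\ge 1$, and form the tower
\[
	X_0 = \bigsqcup_{k\ge 1}\bigsqcup_{j=0}^{k-1} T^j Y_k.
\]
Ergodicity of $(X,m,T)$ forces $m(X_0)=1$, since $X_0$ is the set of points whose forward $T$-orbit meets $Y$, a $T$-invariant set of positive measure. Computing the total mass column by column gives
\[
	1 = m(X_0) = \sum_{k\ge 1} k\cdot m(Y_k) = \int_Y n(y)\dd m(y),
\]
and dividing by $m(Y)$ yields the claimed formula $\int_Y n\dd \wt m = 1/m(Y)$. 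It also shows $\wt m(Y)=1$.

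Next I would verify that $\wt T$ preserves $\wt m$. For a measurable $A\subset Y$,
\[
	\wt T^{-1}(A) = \bigsqcup_{k\ge 1} \bigl(Y_k \cap T^{-k}A\bigr),
\]
and invertibility plus $T$-invariance of $m$ give $m(Y_k\cap T^{-k}A)=m(T^k Y_k \cap A)$. The collection $\{T^k Y_k\}_{k\ge 1}$ partitions $Y$ mod null sets: applying ergodicity to $T^{-1}$, almost every $y'\in Y$ admits a previous visit to $Y$, which exhibits it as $T^k y$ for a unique $y\in Y_k$. Summing over $k$ then gives $\sum_k m(T^k Y_k\cap A) = m(Y\cap A)=m(A)$, hence $\wt m(\wt T^{-1}A) = \wt m(A)$.

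For ergodicity, assume $A\subset Y$ is $\wt T$-invariant modulo null sets. I would saturate it up the tower by defining $\hat A = \bigsqcup_{k\ge 1}\bigsqcup_{j=0}^{k-1} T^j(A\cap Y_k)$. Advancing by one step of $T$ either progresses within a tower column and stays in $\hat A$, or reaches the top and lands in $\wt T(A\cap Y_k)\subset A$ by $\wt T$-invariance; thus $T\hat A\subset \hat A$, and because $T$ preserves $m$ and is invertible this forces $T^{-1}\hat A=\hat A$ mod null sets. Ergodicity of $(X,m,T)$ then gives $m(\hat A)\in\{0,1\}$; matching $m(\hat A)=\sum_k k\cdot m(A\cap Y_k)$ against $m(X)=\sum_k k\cdot m(Y_k)$ shows $m(A\cap Y_k)\in\{0,m(Y_k)\}$ for every $k$, hence $\wt m(A)\in\{0,1\}$.

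I do not expect a real obstacle here since Kac's lemma is classical; the only care needed is in establishing the two decompositions $X_0=\bigsqcup_{k,j} T^j Y_k$ and $Y=\bigsqcup_k T^k Y_k$, each of which rests on applying ergodicity of $T$ respectively of $T^{-1}$ to ensure that almost every orbit visits $Y$ both forward and backward in time.
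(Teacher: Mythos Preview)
Your argument is correct and follows the same Kakutani tower decomposition as the paper: stratify $Y$ by return time, observe that the columns $T^jY_k$ tile $X$ up to a null set, and read off $\sum_k k\cdot m(Y_k)=1$. You in fact go further than the paper, which only proves the integral formula and leaves the measure-preservation and ergodicity assertions implicit; your verifications of both are standard and sound (in the ergodicity step, the inequality $m(A\cap Y_k)\le m(Y_k)$ together with $\sum_k k\cdot m(A\cap Y_k)=\sum_k k\cdot m(Y_k)$ does force termwise equality, so your final line is fine).
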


\begin{proof}
Partition $Y = \bigsqcup_{k=1}^{\infty} Y_k$ where $Y_k = \{y \in Y | n(y) = k \}$, then by ergodicity 
\begin{equation}
X = Y_1 \bigsqcup (Y_2 \sqcup T Y_2) \bigsqcup (Y_3 \sqcup T Y_3 \sqcup T^2 Y_3) \bigsqcup \dots
\end{equation}
Furthermore $m(X) = \sum_{k=1}^{\infty}k \cdot m( Y_k) = 1$, while $m(Y) = \sum_{n=1}^{\infty} m( Y_n)$ so
\[ 
	\int_{Y} n(y) \dd \wt m(y) = \sum_{k=1}^{\infty} \int_{Y_k}n(y) \dd \wt m
			 = \sum_{k=1}^{\infty} k \cdot \frac{m(Y_k)}{m(Y)} 
			 = \frac{1}{m(Y)}. \quad
\]
\end{proof}

\subsection{$L^1$-Integrability} \hfill{}\\
\label{sub:integrability}
%
Next we show $L^1$-integrability of $F: X \rightarrow G$.  Recall the multiplicative cocycle $F_n: X \rightarrow G$ is
\[ 
	F_n(x) = F( T^{n-1} x ) \cdots F( Tx ) F(x).
\]
Let $Y\subset X$ be a measurable set of positive measure $m(Y)>0$.
The Kakutani induced system is $(Y,\wt m,\wt{T})$, where $\wt m = \frac{1}{m(Y)} \cdot m|_{Y}$ is the normalized induced measure, and $\wt{T}(y)=T^{n(y)}y$
where $n: X \rightarrow \bbN$ is the first return time to $Y$.
We equip this induced system with the map  
\[
	\wt F: Y \overto{} G\qquad \wt F (y) = F_{n(y)}(y).
\]
Similarly define $g_n:X\to [0,\infty]$ by
\[
	g_n(x)=g(T^{n-1}x)+\dots+g(Tx)+g(x),
\]
and $\wt{g}:Y\to[0,\infty]$ by 
\[
	\wt{g}(y)=g_{n(y)}(y)=g(T^{n(y)-1}y)+\dots+g(Ty)+g(y).
\]
\begin{lemma}[Integrability]  ~~  \\
\label{lem:L1-integrability}	
	If $\ \log\norm{\Ad F}\in L^1(X,m)$, then $\ \log\norm{\Ad \wt F}\in L^1(Y,\wt m)$.
\label{lem:L1-integrability}
\end{lemma}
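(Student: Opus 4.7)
The plan is to reduce the integrability of $\log\|\Ad\wt F\|$ to the integrability of the scalar Birkhoff-type sum $\wt g(y)=g_{n(y)}(y)$, where $g(x)=\log\|\Ad F(x)\|\in L^1(X,m)$, and then evaluate that sum by unfolding the Kakutani tower exactly as in the proof of Kac's lemma.

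First, I would exploit the subadditivity of $x\mapsto \log\|\Ad(x)\|$. Since $\Ad:G\to\GL(\mathfrak{g})$ is a homomorphism and the operator norm is submultiplicative,
\[
	\log\|\Ad F_n(y)\|=\log\bigl\|\Ad F(T^{n-1}y)\cdots \Ad F(y)\bigr\|\le \sum_{i=0}^{n-1}\log\|\Ad F(T^i y)\|=g_n(y).
\]
Applying this with $n=n(y)$ gives the pointwise bound $\log\|\Ad\wt F(y)\|\le \wt g(y)$ on $Y$, so it suffices to prove $\wt g\in L^1(Y,\wt m)$.

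Next, I would decompose $Y$ according to the first return time. Set $Y_k=\{y\in Y:n(y)=k\}$, so $Y=\bigsqcup_{k\ge 1}Y_k$, and the sets $T^jY_k$ with $k\ge 1$, $0\le j\le k-1$ form a (mod-null) partition of $X$ by ergodicity, exactly as used in Lemma \ref{lem:Kac}. Then
\[
	\int_Y \wt g(y)\,d\wt m(y)=\frac{1}{m(Y)}\sum_{k=1}^\infty \int_{Y_k}\sum_{j=0}^{k-1}g(T^jy)\,dm(y)
	=\frac{1}{m(Y)}\sum_{k=1}^\infty\sum_{j=0}^{k-1}\int_{T^jY_k}g(x)\,dm(x),
\]
where in the last equality I use that $T$ is measure-preserving so the pushforward of $m|_{Y_k}$ under $T^j$ is $m|_{T^jY_k}$. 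Using the tower partition of $X$, the double sum reassembles as
\[
	\sum_{k=1}^\infty\sum_{j=0}^{k-1}\int_{T^jY_k}g\,dm=\int_X g(x)\,dm(x)=\|g\|_{L^1(X,m)},
\]
yielding $\int_Y\wt g\,d\wt m=\|g\|_{L^1}/m(Y)<\infty$ and therefore $\log\|\Ad\wt F\|\in L^1(Y,\wt m)$.

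There is no real obstacle here: the one thing that needs a moment's care is the interchange of sum and integral in the unfolding step, which is justified because $g\ge 0$ (so Tonelli applies). The argument is the standard Kac-type transfer of an $L^1$ observable to the induced system, adapted to the scalar majorant $g=\log\|\Ad F\|$ of the non-commutative quantity $\log\|\Ad F_n\|$.
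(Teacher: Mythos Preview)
Your proof is correct and follows essentially the same approach as the paper: bound $\log\|\Ad\wt F\|$ by the scalar sum $\wt g$ via submultiplicativity of the operator norm, then unfold $\int_Y\wt g\,d\wt m$ over the Kakutani tower to recover $\int_X g\,dm$. Your version is in fact slightly more careful, keeping explicit track of the normalization $1/m(Y)$ and noting that $g=\log\|\Ad F\|\ge 0$ (since $\det\Ad=1$ forces the top singular value to be at least $1$) so that Tonelli justifies the interchange.
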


\begin{proof}
Let $g: X \to [0,\infty]$ denote  $g(x) = \log \norm{\Ad F(x)}$, and
\[
	g_n(x)=g(T^{n-1}x)+\dots+g(Tx)+g(x).
\]
Sub-multiplicativity of the norm $\log\norm{AB}\le \log(\norm{A}\cdot\norm{B})=\log\norm{A}+\log\norm{B}$ implies
\[
	\log\norm{\Ad F_n(x)}\le \sum_{k=0}^{n-1} \log\norm{\Ad F(T^kx)}=\sum_{k=0}^{n-1} g(T^kx)=g_n(x).
\]	
Define $\wt g$ on $Y$ by $\wt{g}(y)=g_{n(y)}(y)$. Using the partition of the base of the Kakutani Tower as in Fig.\ref{fig:Kakutani_tower}
we have 
\[
	Y = \bigsqcup_{k=1}^{\infty} Y_k,\qquad \textrm{where}\qquad Y_k:=\setdef{y\in Y}{n(y)=k}
\] 
We show that 
\[	
	\int_Y \wt g(y) \dd \wt m(y) = \int_X g(x) \dd m(x)<\infty.
\]
Indeed
\[
	\int_{Y_k} g_k \dd m= \int_{Y_k} \sum_{i=0}^k g(T^i x) \dd m = k \int_{Y_k} g \dd m,
\]
and so
\[
	\begin{split}
	\int_Y \wt g \dd \wt m &
		= \sum_{k=1}^{\infty} \int_{Y_k} \wt g \dd \wt m
		= \sum_{k=1}^{\infty} \int_{Y_k} g_k \dd \wt m \\
		&= \sum_{k=1}^{\infty} k \int_{Y_k} g(x) \dd m  
		= \int_X g \dd m.
	\end{split}
\]
Therefore
\[
	\begin{split}
			\int_Y &\log\norm{\Ad \wt F(y)}\dd \wt{m}(y)=\int_Y\log\norm{\Ad F_{n(y)}(y)}\dd\wt{m}(y)\\
			&\le \int_Y g_{n(y)}(y)\dd\wt{m}(y)=\int_Y\wt{g}(y)\dd\wt{m}(y)=\int_X g(x)\dd m(x)<+\infty.
	\end{split}
\]
\end{proof}

As a consequence of Lemma \ref{lem:Kac} is that $\lim_{k \rightarrow \infty} \frac{n_k}{k} = \frac{1}{m(Y)}$, and further we have the following proposition. 
\begin{proposition}
The Lyapunov spectrum $\wt \Lambda \in \mathfrak{a}^+$ of the induced system $(Y, \wt m, \wt T)$ with $\wt F: Y \overto{} G$ is proportional to the Lyapunov spectrum $\Lambda \in \mathfrak{a}^+$ of the full system $(X, m, T)$ with $F:X \overto{} G$ where the proportionality constant is the expected return time
\begin{equation}
\wt \Lambda = \frac{1}{m(Y)} \cdot \Lambda.
\end{equation}
\label{prop:induced_lambdas}
\end{proposition}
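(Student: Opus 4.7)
The plan is to reduce the assertion to a direct application of Oseledets' theorem on both systems combined with a Kac/Birkhoff argument relating the number of returns to $Y$ to the elapsed time. The key observation is that the $k$-th iterate of the induced cocycle is nothing other than an iterate of $F$ along the original trajectory at a specific stopping time, namely
\[
	\wt F_k(y) = F_{n_k(y)}(y),\qquad n_k(y) := n(y) + n(\wt T y) + \dots + n(\wt T^{k-1} y),
\]
so that the induced Lyapunov behavior is governed by the original one as long as we understand the growth rate of $n_k(y)$.

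First I would apply Oseledets' multiplicative ergodic theorem to $(X,m,T)$ with $F$, which is legitimate under the standing integrability hypothesis $\log\norm{\Ad F}\in L^1(X,m)$, to obtain
\[
	\lim_{n\to\infty} \tfrac{1}{n}\kappa(F_n(x)) = \Lambda\qquad\textrm{for $m$-a.e.\ }x\in X.
\]
Since $\wt m$ is absolutely continuous with respect to $m$ on $Y$, this convergence also holds for $\wt m$-a.e.\ $y\in Y$. Next, by Lemma~\ref{lem:L1-integrability} the induced cocycle satisfies $\log\norm{\Ad \wt F}\in L^1(Y,\wt m)$, so Oseledets applied to $(Y,\wt m,\wt T)$ with $\wt F$ produces $\wt\Lambda\in\mathfrak a^+$ with
\[
	\lim_{k\to\infty} \tfrac{1}{k}\kappa(\wt F_k(y)) = \wt \Lambda\qquad\textrm{for $\wt m$-a.e.\ }y\in Y.
\]

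The bridge between the two asymptotics is Birkhoff's ergodic theorem on the induced system. By Kac's lemma (Lemma~\ref{lem:Kac}), $n\in L^1(Y,\wt m)$ with $\int_Y n\dd\wt m = 1/m(Y)$, and since $(Y,\wt m,\wt T)$ is ergodic,
\[
	\lim_{k\to\infty}\frac{n_k(y)}{k} = \int_Y n\dd\wt m = \frac{1}{m(Y)}\qquad\textrm{for $\wt m$-a.e.\ }y\in Y.
\]
In particular $n_k(y)\to\infty$, so we may evaluate the Oseledets limit for $F_n$ along the subsequence $n = n_k(y)$:
\[
	\wt\Lambda = \lim_{k\to\infty} \frac{\kappa(F_{n_k(y)}(y))}{k} = \lim_{k\to\infty} \frac{n_k(y)}{k}\cdot \frac{\kappa(F_{n_k(y)}(y))}{n_k(y)} = \frac{1}{m(Y)}\cdot\Lambda.
\]

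There is no serious obstacle, but two small points require care. First, one must intersect the full-measure subset of $Y$ on which the Oseledets limit for $F_n$ converges with the full-measure subset on which Birkhoff for $n$ converges and on which the Oseledets limit for $\wt F_k$ converges; all three are $\wt m$-conull, so their intersection is $\wt m$-conull. Second, the argument is vector-valued in $\mathfrak a$, but this causes no trouble since all limits take place in a finite-dimensional Euclidean space, and the product of a scalar limit with a vector limit behaves as expected.
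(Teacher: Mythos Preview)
Your proof is correct and follows essentially the same route as the paper: both arguments rest on the identity $\wt F_k(y)=F_{n_k(y)}(y)$, split $\tfrac{1}{k}\kappa(F_{n_k(y)}(y))$ as $\tfrac{n_k(y)}{k}\cdot\tfrac{1}{n_k(y)}\kappa(F_{n_k(y)}(y))$, and invoke Kac's lemma for the first factor and the Oseledets limit along the subsequence $n_k(y)$ for the second. Your write-up is slightly more careful about the measure-theoretic bookkeeping (intersecting the three $\wt m$-conull sets), but the content is the same.
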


\begin{proof}
Compare  $F: X \rightarrow G$ and multiplicative cocycle $F_n(x)$ on $(X, m, T)$ with  
$\wt F : Y \rightarrow G$ with $\wt F(y) = F_{n(y)}(y)$ on $(Y, \wt m, \wt T)$. 
For $\wt{m}$-a.e. $y\in Y$
\[
	\begin{split}
	\wt\Lambda &=\lim_{k\to\infty} \frac{1}{k}\kappa(\wt{F}_k(y))=\lim_{k\to\infty} \frac{1}{k}\kappa(F_{n_k(y)}(y))\\
	&=\lim_{k\to\infty} \frac{n_k(y)}{k}\cdot \frac{1}{n_k(y)}\kappa(F_{n_k(y)}(y))=\frac{1}{m(Y)}\cdot \Lambda
	\end{split}
\]
because $n_k(y)/k\to 1/m(Y)$ by the Kac lemma.
\end{proof}

\subsection{Markov Chain vs. Random Walk}
\label{sub:markov_shift} \hfill{}\\

{\it In this section we show properties of the SFT with Markov measure relative to the random walk case.}

\medskip

We return to the mixing Subshift of Finite Type $(X,m,T)$ given by the graph $\calG=(V,E)$, equipped the Markov measure given by matrix $P=(p_{uv})$ of transition probabilities and stationary measure $\pi$ on $V$ 
satisfying $\pi P = \pi$. 
Let $f: E \rightarrow G$ satisfy Assumption \ref{standing_assumptions} and $\Lambda \in \mathfrak{a}$ be its Lyapunov spectrum. 

For a vertex $v\in V$ we denote by $Y^{(v)}=\setdef{x\in X}{x_0=v}$ the set of the trajectories that are visit $v$ at time $0$.
This a positive measure subset of $(X,m)$, and we denote the corresponding Kakutani induced system by $(Y^{(v)}, m^{(v)}, T^{(v)})$.
Note that $m(Y^{(v)}) = \pi(v)$. We have
\[
	m^{(v)} = \frac{1}{m(Y^{(v)})} \cdot m|_{Y^{(v)}}
\]
and $T^{(v)} y=T^{n(y)}y$, where $n(y)=\min\setdef{k\in\bbN}{y_k=v}$ is the first return time.

Recall from \S \ref{sub:mixingSFT} the space of honest loops $\mathcal L^{(v)}$ that avoid $v$ except at exactly both endpoints,
 $\ell = x_0 x_1\dots x_n$ with $v = x_0 = x_n$ and $x_i \neq v$ for $0 < i < n$. 
We write $|\ell|=n$ the length of the loop $\ell$. 
Given such a loop $\ell\in\mathcal{L}^{(v)}$ the probability to follow it by the Markov chain, given the
current position is at $v$, is $p_{\ell} = \prod_{\ell} p_{x_ix_{i+1}}$.
Let us denote by 
\[
	f_\ell=f_{x_0x_1\dots x_n}=f_{x_{n-1}x_n}\cdots f_{x_0x_1}.
\]
Let $\mu^{(v)}\in\Prob(G)$ denote the distribution of $f_\ell$ according to $p_\ell$
\[
	\mu^{(v)}=\sum_{\ell\in \mathcal{L}^{(v)}} p_\ell\cdot \delta_{f_\ell}.
\]
\begin{proposition}
	The Kakutani induced system $(Y^{(v)}, m^{(v)}, T^{(v)})$ is a Bernoulli system; the iterates of $F^{(v)}: Y^{(v)} \to G$ yield a product of iid random variables
	distributed according to $\mu^{(v)}$. 
\label{prop:Bernoulli}
\end{proposition}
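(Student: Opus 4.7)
My plan is to exhibit a concrete measure-preserving isomorphism between $(Y^{(v)}, m^{(v)}, T^{(v)})$ and the two-sided Bernoulli shift on $(\mathcal{L}^{(v)})^{\bbZ}$ with product measure $\mu_{\mathrm{loop}}^{\otimes \bbZ}$, where $\mu_{\mathrm{loop}} := \sum_{\ell \in \mathcal{L}^{(v)}} p_\ell \, \delta_\ell$. Given such an isomorphism the statement follows immediately: $F^{(v)}$ will depend on only one coordinate of the loop coding, and its push-forward is precisely $\mu^{(v)} = \sum_{\ell} p_\ell \, \delta_{f_\ell}$.

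To build the coding, for $y \in Y^{(v)}$ I enumerate the return times to $v$ as $\ldots < n_{-1}(y) < n_0(y) = 0 < n_1(y) < n_2(y) < \ldots$; ergodicity ensures these are a.s.\ defined in both directions. Cutting $y$ along these returns produces first-return loops $\ell_j(y) := (y_{n_{j-1}(y)}, \ldots, y_{n_j(y)}) \in \mathcal{L}^{(v)}$, and I set $\Phi(y) := (\ell_j(y))_{j \in \bbZ}$. Then $\Phi$ is measurable, essentially bijective, and intertwines $T^{(v)}$ with the shift on $(\mathcal{L}^{(v)})^{\bbZ}$. The crux is the computation of $\Phi_* m^{(v)}$ on cylinders. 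Fix loops $L_{-r}, \ldots, L_s \in \mathcal{L}^{(v)}$; the pre-image $C := \Phi^{-1}(\{L_{-r}\} \times \cdots \times \{L_s\})$ is a single Markov cylinder in $X$ specifying the values $y_i$ along the concatenation $L_{-r} L_{-r+1} \cdots L_s$, with $y_0 = v$ automatic. Its $m$-measure equals $\pi(v) \cdot \prod_{j=-r}^{s} p_{L_j}$: the leftmost specified coordinate (which is necessarily $v$) contributes the stationary weight $\pi(v)$, and the remaining edge-transition probabilities group naturally according to the loop they belong to. Dividing by $\pi(v) = m(Y^{(v)})$ yields $m^{(v)}(C) = \prod_{j=-r}^{s} p_{L_j}$, the Bernoulli-cylinder value. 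By Kolmogorov extension, $\Phi_* m^{(v)} = \mu_{\mathrm{loop}}^{\otimes \bbZ}$, so $(Y^{(v)}, m^{(v)}, T^{(v)})$ is Bernoulli.

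Finally, unfolding the definitions of $F_n$ and $f_\ell$ gives $F^{(v)}(y) = F_{n(y)}(y) = f_{y_{n(y)-1} y_{n(y)}} \cdots f_{y_0 y_1} = f_{\ell_1(y)}$; hence under $\Phi$, $F^{(v)} \circ (T^{(v)})^k$ depends only on the $(k+1)$-th loop-coordinate. Since these coordinates are iid with distribution $\mu_{\mathrm{loop}}$, the $G$-valued sequence $(F^{(v)} \circ (T^{(v)})^k)_{k \geq 0}$ is iid with common distribution $\mu^{(v)}$, as claimed. The only delicate point is the cylinder computation: what makes it work cleanly is that consecutive first-return loops meet only at the base vertex $v$, so the raw Markov cylinder measure involves a single stationary factor $\pi(v)$ at the leftmost coordinate, which precisely cancels the normalization $1/\pi(v)$ in $m^{(v)}$, with no intermediate telescoping required.
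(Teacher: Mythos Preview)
Your proof is correct and follows essentially the same approach as the paper: decompose trajectories in $Y^{(v)}$ into first-return loops and use the Markov property to obtain independence. The paper's proof is a brief sketch invoking the Markov property directly, whereas you carry out the full loop-coding isomorphism and verify the cylinder measures explicitly; the substance of the argument is the same.
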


\begin{proof}
	Almost every $y\in Y^{(v)}$ determines a loop $\ell(y)$ whose length is the first return time $n(y)$; and the $m^{(v)}$-measure
	of the set $Y^{(v)}_\ell\subset  Y^{(v)}$ of points $y$ that give rise to a given loop $\ell$, has  $m^{(v)}(Y^{(v)}_\ell)=p_\ell$, 
	and for such $y$ one has $F^{(v)}(y)=f_\ell$.
	The Markovian property implies that the each return is taken independently of the previous ones: hence $F^{(v)}(y)$, $F^{(v)}(T^{(v)}y)$, $\dots$, 
	are independent $\mu^{(v)}$-distributed variables. 
\end{proof}

Note that the periodic data ${\rm Per}^{(v)}$ is precisely $\setdef{f_\ell}{\ell\in\calL^{(v)}}$ where each $f_\ell$ has positive measure.
Thus $\supp(\mu^{(v)})$ contains ${\rm Per}^{(v)}$, and therefore Zariski dense periodic data Assumption \ref{standing_assumptions} implies that the group $\Gamma_\mu$ 
generated by $\supp(\mu^{(v)}$ is Zariski dense in $G$.

Lemma \ref{lem:L1-integrability} showed $L^1$-integrability for the Kakutani induced maps, so we can define the Lyapunov spectrum $\Lambda^{(v)}$ for the random walk
with law $\mu^{(v)}$. Zariski density then implies that the spectrum is simple (see below)
\[
	\Lambda^{(v)}\in\frak{a}^{++}.
\]
Next we show the stronger integrability condition: $\mu^{(v)}$ has exponential moments. i.e. for some $\epsilon>0$
\[
	\int_G \norm{\Ad g}^\epsilon\dd\mu^{(v)}(g)<+\infty.
\]
This condition implies that $\int\log^p\norm{\Ad g}\dd\mu^{(v)}(g) < \infty$ for any $p<\infty$, in particular for $p=2$
which required by the Benoist--Quint machinery in \cite{BQ_CLT16}*{Proposition 4.5}.
(In fact, a finite exponential moment is sufficient for the previously known works, such as Guivarc'h--Raugi \cite{Guivarch-Raugi86} or Benoist--Quint \cite{BQ_book}).
First, the next Lemma implies that the probability of lengths of excursions of our random walk away from $v \in V$ decays exponentially. 

\begin{lemma} \label{lem:bound}
	There exists $ \lambda \in (0,1)$ and $A < \infty$ for all $n \in \bbN$ 
	\[
		\sum_{\setdef{\ell \in \mathcal L^{(v)}}{ |\ell| = n}} p_{\ell} \leq A \cdot \lambda^n.
	\]
\end{lemma}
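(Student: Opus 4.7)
The plan is to reduce the bound to a spectral radius argument for a substochastic matrix obtained by forbidding passage through $v$.

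First I would express the first-return probability to $v$ in matrix form. Let $Q$ denote the $(|V|-1)\times (|V|-1)$ substochastic matrix obtained from $P$ by deleting the row and column indexed by $v$; equivalently $Q_{uw}=p_{uw}$ for $u,w\in V\setminus\{v\}$ and $Q_{uw}=0$ otherwise. For any loop $\ell=(v,u_1,\dots,u_{n-1},v)\in\mathcal{L}^{(v)}$ with $|\ell|=n\ge 2$ the weight factors as
\[
p_\ell=p_{v,u_1}\,p_{u_1,u_2}\cdots p_{u_{n-2},u_{n-1}}\,p_{u_{n-1},v},
\]
where the intermediate $u_i$ range over $V\setminus\{v\}$. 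Summing over such loops gives
\[
\sum_{\substack{\ell\in\mathcal{L}^{(v)}\\|\ell|=n}} p_\ell
=\sum_{u,w\in V\setminus\{v\}} p_{v,u}\,(Q^{n-2})_{u,w}\,p_{w,v}
\le \|Q^{n-2}\|_\infty
\]
for $n\ge 2$, since the left and right vectors have entries in $[0,1]$ and row-sums at most $1$.

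Next I would show that $\rho(Q)<1$. Because $\calG$ is connected, for every $u\in V\setminus\{v\}$ there exists a path in $\calG$ from $u$ to $v$ of length at most $N:=|V|-1$; along such a path every transition probability is strictly positive by definition of $P$. Hence for each $u\ne v$ there is some $k\le N$ with $(P^k)_{u,v}>0$, which in turn forces the $u$-th row sum of $Q^N$ to be strictly less than $1$ (some probability mass has leaked out to $v$ within $N$ steps). Thus
\[
\|Q^N\|_\infty =\max_{u\ne v}\sum_{w\ne v}(Q^N)_{u,w}=:c<1.
\]
By submultiplicativity of $\|\cdot\|_\infty$ this yields $\|Q^{kN}\|_\infty\le c^k$, and interpolating over residues modulo $N$ gives constants $A'<\infty$, $\lambda\in(0,1)$ with $\|Q^n\|_\infty\le A'\lambda^n$ for all $n\ge 0$. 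Combining with the first display and absorbing the constant $\lambda^{-2}$ into $A$ gives the desired bound $\sum_{|\ell|=n}p_\ell\le A\lambda^n$.

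There is no serious obstacle: the argument is the standard Perron--Frobenius style decay of taboo probabilities for an irreducible finite-state Markov chain. The only point that requires the hypothesis on $\calG$ is the strict substochasticity of some power of $Q$, which uses connectedness alone (aperiodicity is not needed for this lemma).
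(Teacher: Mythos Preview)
Your argument is correct. It is close in spirit to the paper's proof but packaged differently, and it actually uses a weaker hypothesis.

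The paper invokes the mixing assumption directly: since the chain is topologically mixing there exist $q$ and $t>0$ with $p^q_{uw}\ge t$ for all $u,w$, so from any state the chance of being at $v$ after $q$ further steps is at least $t$; iterating, the probability of avoiding $v$ at all the times $q,2q,\dots,kq$ is at most $(1-t)^k$, which gives the exponential bound with $\lambda=(1-t)^{1/q}$.

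Your route instead encodes the taboo on $v$ by the substochastic matrix $Q$, writes the first-return mass as a bilinear form in $Q^{n-2}$, and then shows $\|Q^N\|_\infty<1$ using only connectedness of $\calG$ (every $u\ne v$ reaches $v$ along a positive-probability path of length $\le N=|V|-1$). This is the standard Perron--Frobenius taboo-probability argument and is slightly sharper in that aperiodicity plays no role, as you note. The paper's version is a bit quicker because it leverages the already-assumed mixing to get a uniform one-line bound, at the cost of an unnecessary hypothesis for this particular lemma. Both yield the same conclusion with comparable constants; neither offers a real advantage beyond taste, except that your observation isolates exactly which graph hypothesis is doing the work.
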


\begin{proof}
The mixing property of the Markov chain implies there is $q\in\bbN$ and $t>0$ such that $p^q_{uv}\ge t>0$ for all $u,v\in V$.
For large $n$ write $n=kq+r$ with $1\le r\le q$ and consider the event of taking $n$ steps starting at $v$ and never visiting $v$ of up to $n-1$ steps.
It is contained in the event that $v$ is not visited at times $q$, $2q$, ..., $kq$, and the probability of the latter is bounded from above by $(1-t)^k$.
This gives the desired estimate with $\lambda=(1-t)^{1/q}$ and $A=\lambda^{-q-1}$.
\end{proof}

\begin{lemma}[$L^p$-integrability]\label{lem:lp-integrability}
	There exists $\tau > 0$ such that for all $v \in V$, $\norm{\Ad F^{(v)}}^{\tau} \in L^1(Y^{(v)}, m^{(v)})$. In particular $\log \norm{\Ad F^{(v)}} \in L^p(Y^{(v)},m^{(v)})$ for all $p \in [1,\infty)$.
\end{lemma}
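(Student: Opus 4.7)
The plan is to combine the exponential decay of the return-time distribution from Lemma \ref{lem:bound} with the uniform bound on $\|\Ad f_e\|$ coming from the finiteness of the edge set $E$. Set $C = \max_{e \in E} \|\Ad f_e\| < \infty$; since $G$ is connected semisimple the map $g \mapsto \det \Ad g$ is a continuous homomorphism into $\bbR^*$ that vanishes on $[G,G] = G$, hence $\det \Ad g = 1$ for all $g \in G$, so $\|\Ad g\| \geq 1$ in any Euclidean norm on $\frak{g}$ and in particular $C \geq 1$. Submultiplicativity of the operator norm then yields, for every first-return loop $\ell = (x_0, x_1, \dots, x_n) \in \mathcal L^{(v)}$,
\[
	\|\Ad f_\ell\| = \|\Ad f_{x_{n-1}x_n} \cdots \Ad f_{x_0 x_1}\| \leq \prod_{i=0}^{n-1} \|\Ad f_{x_i x_{i+1}}\| \leq C^{|\ell|}.
\]

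By Proposition \ref{prop:Bernoulli}, the distribution of $F^{(v)}$ under $m^{(v)}$ is $\mu^{(v)} = \sum_{\ell \in \mathcal L^{(v)}} p_\ell \, \delta_{f_\ell}$, so for any $\tau > 0$ we obtain, grouping loops by length and applying Lemma \ref{lem:bound},
\[
	\int_{Y^{(v)}} \|\Ad F^{(v)}\|^\tau \, dm^{(v)} = \sum_{\ell \in \mathcal L^{(v)}} p_\ell \, \|\Ad f_\ell\|^\tau \leq \sum_{n=1}^{\infty} \Bigl(\sum_{|\ell|=n} p_\ell\Bigr) C^{n\tau} \leq \sum_{n=1}^{\infty} A \lambda^n C^{n\tau}.
\]
It suffices to choose $\tau > 0$ small enough that $\lambda C^{\tau} < 1$ (possible since $\lambda \in (0,1)$ and $C < \infty$) to make the geometric series convergent. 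The constants $A$, $\lambda$, and $C$ can be taken independent of $v \in V$ because $V$ and $E$ are finite, so a single $\tau > 0$ works for all vertices simultaneously.

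For the $L^p$ statement, the lower bound $\|\Ad F^{(v)}\| \geq 1$ established above gives $\log \|\Ad F^{(v)}\| \geq 0$, and the elementary inequality $(\log t)^p \leq K_{p,\tau} + t^{\tau}$ for $t \geq 1$ (which holds because $(\log t)^p / t^{\tau} \to 0$ as $t \to \infty$) converts the exponential moment $\|\Ad F^{(v)}\|^\tau \in L^1$ into an $L^1$ bound on $(\log \|\Ad F^{(v)}\|)^p$ for every $p \in [1,\infty)$. No real obstacle arises: once Lemma \ref{lem:bound} is in hand, everything reduces to a geometric series computation plus the standard domination of logs by small powers.
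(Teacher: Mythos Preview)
Your proof is correct and follows essentially the same route as the paper: bound $\|\Ad f_\ell\|\le C^{|\ell|}$ via submultiplicativity with $C=\max_{e\in E}\|\Ad f_e\|$, invoke Lemma~\ref{lem:bound} to control $\sum_{|\ell|=n}p_\ell$, and sum the resulting geometric series for $\tau$ small enough that $\lambda C^\tau<1$. Your added justifications that $C\ge 1$ (via $\det\Ad g=1$) and that $(\log t)^p\le K_{p,\tau}+t^\tau$ for $t\ge 1$ make explicit two steps the paper leaves to the reader, but the argument is the same.
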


\begin{proof}
For $\ell \in \mathcal{L}^{(v)}$
\begin{equation}
	\int_{Y^{(v)}} \norm{\Ad F^{(v)}(x)}^{\tau} \dd m^{(v)}(x) 
		= \sum_{\ell \in \mathcal L^{(v)}} p_{\ell} \norm{\Ad f_{\ell}}^{\tau}.
\label{eqn:tau_norm}
\end{equation}
There exists $C>0$ such that $\norm{f_{\ell}} \leq C^{|\ell|}$, take for example 
\[
	C = \max_{(u,v) \in E}\norm{\Ad f_{uv}}.
\] 
Then by Eq. (\ref{eqn:tau_norm}) and Lemma \ref{lem:bound}
\[
	\begin{split}
	\int_{Y^{(v)}} \norm{\Ad F^{(v)}(x)}^{\tau} \dd m^{(v)}(x) 
	& \leq \sum_{n=1}^{\infty} \left( \sum_{\ell \in \mathcal L^{(v)}, |\ell|=n} p_{\ell} \cdot C^{\tau n} \right)\\
	& \leq \sum_{n=1}^{\infty} C^{\tau n} A \lambda^n 
			= A \cdot \sum_{n=1}^{\infty} (C^{\tau} \lambda)^n 
				< \infty
	\end{split}
\]
provided $C^{\tau}\lambda < 1$, or equivalently $\tau < \frac{\log\frac{1}{\lambda}}{\log C}$.

Finally $\norm{\Ad F^{(v)}}^{\tau} \in L^1$ implies $\log \norm{\Ad F^{(v)}} \in L^p$ for all $p \in [1,\infty)$.
\end{proof}

\medskip

\subsection{Stationary measures} \hfill{}\\
\label{sub:stationarity}

Let $\mu$ be a Borel probability measure on $G$. A probability measure $\nu$ on $B$ is $\mu$-stationary if 
\[
	\nu = \mu * \nu = \int_G g_* \nu \, \dd \mu(g).
\]
Let $\Gamma_{\mu}$ be the subgroup of $G$ generated by $\supp(\mu)$.
From the works Furstenberg \cite{F73}, Guivarc'h--Raugi \cite{Guivarch-Raugi86}, Goldsheid--Margulis \cite{GoldsheidMargulis89} (see \cite{BQ_book}) it follows that 
if $\Gamma_{\mu}$ is Zariski dense in $G$ then there exists a unique $\mu$-stationary measure $\nu$ on $B=G/P$.
Furthermore, under integrability condition
\[
	\int_G \log\norm{\Ad g}\dd\mu(g)<\infty
\]
the Lyapunov spectrum $\Lambda$ for the $\mu$-random walk is simple.

The relationship of $B^{(v)} = \{v\} \times B \subset V \times B$ of the induced system at each $v \in V$ and the stationary measures $\nu^{(v)}$ in the Markov chain are determined by the stationary measure $\pi$ intrinsic to the Markov chain, see Figure \ref{Fig:schematic_diag}. Let $\nu^{(v)} \in \Prob(B^{(v)})$ be the stationary measure for the induced random walk. The Kakutani tower over $Y^{(v)}$ is used to construct $\nu^{(v)}$ for a particular $v \in V$, next we transfer this measure to any other $w \in V$ via $\pi$. 

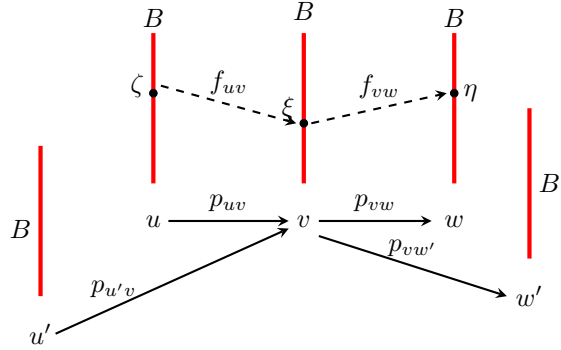
\begin{figure}
\begin{tikzpicture}
  [
  matyi arrow/.style = {->,>=stealth,thick},
  ]

  \node at (-2,0) {$u$};
  \draw[ultra thick,red] (-2,.5)--(-2,2.5); 
  \node at (-2,2.7) {$B$};
  \draw[fill=black] (-2,1.7) circle (.05); 
  \node[left] at (-2,1.8) {$\zeta$};
  
  \node at (0,0) {$v$};
  \draw[ultra thick,red] (0,.5)--(0,2.5);
  \node at (0,2.8) {$B$};
  \draw[fill=black] (0,1.3) circle (.05); 
  \node[left] at (0,1.5) {$\xi$};

  \node at (2,0) {$w$};
  \draw[ultra thick,red] (2,.5)--(2,2.5); 
  \node at (2,2.7) {$B$};
  \draw[fill=black] (2,1.7) circle (.05); 
  \node[right] at (2,1.7) {$\eta$};

  \node at (3,-1) {$w'$};
  \draw[ultra thick,red] (3,-.5)--(3,1.5); 
  \node[right] at (3,.5) {$B$};

  \node at (-3.5,-1.5) {$u'$};
  \draw[ultra thick,red] (-3.5,-1)--(-3.5,1); 
  \node[left] at (-3.5,-0.1) {$B$};

  \draw[matyi arrow] (-1.8,0)--(-.2,0) node[midway,above] {$p_{uv}$};
  \draw[matyi arrow] (-3.3,-1.5)--(-.2,-.1) node[near start,above] {$p_{u'v}$};
  
  \draw[matyi arrow] (0.2,0)--(1.7,0) node[midway,above] {$p_{vw}$};
  \draw[matyi arrow] (0.2,-0.2)--(2.7,-1) node[midway,above] {$p_{vw'}$};

  \draw[matyi arrow,dashed] (-1.9,1.8)--(-.1,1.3) node[midway,above] {$f_{uv}$};  
  \draw[matyi arrow,dashed] (0.1,1.3)--(1.9,1.7) node[midway,above] {$f_{vw}$};

\end{tikzpicture}
\caption{Schematic diagram of the Markov process on $V \times B$ at node $v \in V$.}
\label{Fig:schematic_diag}
\end{figure}

Let $\nu^{(v)} \in \Prob(B^{(v)})$ be a stationary measure for the induced random walk. 
Recall that we denote by $\pi\in\Prob(V)$ the stationary vector for the Markov chain $P$ on $\calG=(V,E)$,
and $\pi(v)=m(Y^{(v)})$.
We next show that a measure $\nu \in \Prob(V \times B)$ constructed by aggregating the measures $\nu^{(v)}$ as
\begin{equation}
	\nu(v,\xi) = \sum_{v \in V} \pi(v) \cdot \delta_v \otimes \nu^{(v)}
\label{eq:stat_msr}
\end{equation}
is stationary for the Markov process on $V\times B$ defined by $f:E\to G$.
In this chain 
\begin{equation}\label{e:VB-MC}
	(u,\xi)\ \mapsto (v,f_{uv}.\xi)\qquad\textrm{with\ probability}\qquad p_{uv}.
\end{equation}
Note that each $\nu^{(v)} \in \Prob(B^{(v)})$ and $\sum_{v \in V} \pi(v) = \sum_{v \in V} m(Y^{(v)}) = 1$, hence indeed $\nu \in \Prob(V \times B)$.

\begin{proposition}\label{prop:Markov_stat_msr}
	The measure $\nu$ in Eq. (\ref{eq:stat_msr}) is the unique stationary measure for the Markov chain on $V \times B$. 
\end{proposition}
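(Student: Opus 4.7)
My plan is to combine a Kakutani unrolling construction with Furstenberg's uniqueness theorem for stationary measures on the flag variety. I would first exhibit a stationary probability measure $\tilde\nu$ on $V \times B$ (existence), then show that any stationary measure must equal $\nu$ from Eq.~(\ref{eq:stat_msr}) (uniqueness), so in particular $\tilde\nu = \nu$.

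For existence, I would produce $\tilde\nu$ by unrolling the induced Bernoulli system. Fix a reference vertex $v_0$ and consider $(Y^{(v_0)}, m^{(v_0)}, T^{(v_0)})$ from Proposition~\ref{prop:Bernoulli}. Since $\nu^{(v_0)}$ is $\mu^{(v_0)}$-stationary on $B$ (Furstenberg applied to the Zariski dense law $\mu^{(v_0)}$), the product $m^{(v_0)} \otimes \nu^{(v_0)}$ is invariant under the induced skew product on $Y^{(v_0)} \times B$. Unrolling the Kakutani tower over $Y^{(v_0)}$ produces a probability measure $\frak{m}$ on $X \times B$ that is $T_f$-invariant and projects to $m$ on $X$. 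Pushing $\frak{m}$ forward under $(x,\xi) \mapsto (x_0,\xi)$ yields a probability measure $\tilde\nu$ on $V \times B$; the $T_f$-invariance of $\frak{m}$ translates into stationarity of $\tilde\nu$ for the chain (\ref{e:VB-MC}), because the transition rule of $T_f$ depends only on $(x_0, x_1)$.

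For uniqueness, let $\tilde\nu$ be any stationary probability measure on $V\times B$. Its projection to $V$ is $P$-stationary, so by connectedness and aperiodicity of $\calG$ it equals $\pi$, and we may disintegrate $\tilde\nu = \sum_{v \in V} \pi(v) \cdot \delta_v \otimes \tilde\nu^{(v)}$ with $\tilde\nu^{(v)} \in \Prob(B)$. The key observation is that iterating the chain (\ref{e:VB-MC}) from $v$ until the first return to $v$ reproduces the Bernoulli random walk on $B$ driven by $\mu^{(v)}$: excursions are in bijection with loops $\ell \in \calL^{(v)}$, occurring with probability $p_\ell$ and acting on $B$ by $f_\ell$. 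Stationarity of $\tilde\nu$ therefore forces each $\tilde\nu^{(v)}$ to be $\mu^{(v)}$-stationary on $B$. Since $\supp(\mu^{(v)}) \supset \operatorname{Per}^{(v)}$ generates a Zariski dense subgroup of $G$ by Assumption~\ref{standing_assumptions}, Furstenberg's uniqueness theorem (as recalled in \S\ref{sub:stationarity}) gives $\tilde\nu^{(v)} = \nu^{(v)}$, whence $\tilde\nu = \nu$.

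The main obstacle is verifying cleanly that the conditional $\tilde\nu^{(v)}$ of any stationary measure is automatically $\mu^{(v)}$-stationary. This is the standard Kakutani fact that the restriction of a stationary measure to a positive-measure section is stationary for the first-return process; I would spell it out by iterating the transition kernel of (\ref{e:VB-MC}) and regrouping terms by the excursion length $|\ell|$, using the strong Markov property of the chain. Everything else in the argument is bookkeeping, and the construction of $\frak{m}$ via the Kakutani tower is standard once the induced Bernoulli system has a stationary measure on $B$.
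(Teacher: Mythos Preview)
Your proposal is correct and follows essentially the same architecture as the paper's proof: both arguments take an arbitrary stationary measure on $V\times B$, project to $V$ to force the marginal to be $\pi$, disintegrate over the fibers $B^{(v)}$, and then use the first-return (Kakutani) reduction to the Bernoulli walk $\mu^{(v)}$ together with Furstenberg's uniqueness to pin down each fiber as $\nu^{(v)}$. The step you call the ``main obstacle'' --- that the fiber conditional of any stationary measure is automatically $\mu^{(v)}$-stationary for the first-return process --- is exactly the point the paper passes over in one sentence (``running the Markov chain on $V\times B$ and using the visits to $B^{(v)}$ as the stopping time, we get the distribution $\alpha_v$'').

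The one genuine difference is in how existence is obtained. The paper does not unroll a Kakutani tower; it simply notes that the dual Markov operator $M^*$ acts on the compact convex set $\Prob(V\times B)$ and therefore has a fixed point $\alpha$, and then the uniqueness argument simultaneously identifies $\alpha=\nu$ and certifies that $\nu$ is stationary. Your unrolling construction is more explicit and has the pleasant side effect of actually producing the $T_f$-invariant measure $\frak{m}$ on $X\times B$ that the paper uses later, but it costs you the extra verification that the pushforward under $(x,\xi)\mapsto(x_0,\xi)$ is stationary for the chain (\ref{e:VB-MC}); this needs the past-measurability of the unrolled fiber measures, not merely that ``the transition rule of $T_f$ depends only on $(x_0,x_1)$''. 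The paper's compactness shortcut avoids that check entirely.
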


\begin{proof}
	Consider the Markov process on $V\times B$ given by transitions $(u,\xi)\mapsto (v,f_{uv}.\xi)$ with probability $p_{uv}$.
	It defines a Markov operator on the space $C(V\times B)$ of all continuous functions by
	\[
		M\phi(u,\xi)=\sum_{v \in V} p_{uv}\phi(v,f_{uv}.\xi)\qquad (\phi\in C(V\times B))
	\]
	Being Markov here means that $M$ is linear, positive $\phi\ge 0$ implies $M\phi\ge 0$, and $M$ leaves invariant constant functions $M\mathbf{1}=\mathbf{1}$.
	The dual operator $M^*$ acts on the convex compact space of probability measures $\Prob(V\times B)$ and therefore has a fixed point there.
	Let $\alpha$ on $V\times B$ be such an $M$-fixed probability measure.
	
	The natural projection $\pr:V\times B\to V$ has the given Markov chain as an equivariant quotient.
	In particular, $\pr_*\alpha$ would be $P^*$-invariant measure on $V$. Since $\pi$ is the unique $P$-stationary measure,
	it follows that 
	\[
		\alpha(\{v\}\times B)=\pi(v)\qquad (v\in V).
	\]
	We denote the fiber above $v$ by $B^{(v)}=\{v\}\times B$.
	Thus the normalized restricted measure 
	\[
		\alpha_v=\pi(v)^{-1}\cdot \alpha|_{B^{(v)}}
	\]
	is a probability measure.
	We can now observe that running the Markov chain on $V\times B$ and using the visits to $B^{(v)}$
	as the stopping time, we 
	get the distribution $\alpha_v$. 
	But it follows from Proposition~\ref{prop:Bernoulli} that it must be the unique
	$\mu^{(v)}$-stationary measure $\nu^{(v)}$. Thus
	\[
		\pi(v)^{-1}\cdot \nu|_{B^{(v)}}=\nu^{(v)}.
	\]
	Therefore $\alpha=\nu$ is the unique $M$-stationary measure.
	In particular we get
	\[
		\nu^{(v)}=\sum_u \pi(u)p_{uv}\cdot (f_{uv})_*\nu^{(u)}.
	\]
\end{proof}

\medskip

Let us summarize 
\begin{theorem}\label{T:induced-RW}
	Let $(X,m,T)$ be a mixing Markov chain on a graph $\calG=(V,E)$ and symmetric map $f:E\overto{} G$
	with values in a semi-simple Lie group $G$ with a Zariski dense periodic data, defining $F:X\to G$.
	
	Then for every vertex $v\in V$ the Kakutani induced system $(Y^{(v)}, m^{(v)}, T^{(v)})$ and the
	map $F^{(v)}:Y^{(v)}\overto{} G$ given by multiplying $f$ along the trajectory of the first return to $v$
	define a Bernoulli random walk with law $\mu^{(v)}$ of $F^{(v)}(y)\dd m^{(v)}(y)$ on $G$ such that
	\begin{itemize}
		\item $\mu^{(v)}$ has an exponential moment, i.e. $\int \norm{\Ad F^{(v)}}^\tau\dd m^{(v)}<\infty$ for some $\tau>0$,
		\item the support $\supp(\mu^{(v)})$ generates a Zariski dense subgroup in $G$, 
		\item there is a unique $\mu^{(v)}$-stationary measure $\nu^{(v)}\in \Prob(B)$,
		\item the Lyapunov spectrum $\Lambda^{(v)}$ is simple, i.e. lies in the interior $\frak{a}^{++}$ of $\frak{a}^+$.
	\end{itemize} 
	The measure $\nu\in\Prob(V\times B)$ given, by 
	\[
		\nu=\sum_{v\in V} \pi(v)\cdot \delta_v\otimes \nu^{(v)},
	\]	
	is the unique stationary measure for the Markov chain on $V\times B$ given by
	the maps $(v,\xi)\mapsto (u,f_{vu}\xi)$ with the transition probability $p_{vu}$ of 
	the base Markov chain on $\calG=(V,E)$.
	The Lyapunov spectrum $\Lambda$ for $F:X\to G$ on $(X,m,T)$ is simple and positively proportionate to $\Lambda^{(v)}$
	\[
		\Lambda=m(Y^{(v)})\cdot \Lambda^{(v)}.
	\]
	Similar statements hold for the reversed Markov chain on $\calG=(V,E)$ and $(X,m,T^{-1})$.   
\end{theorem}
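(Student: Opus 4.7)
The plan is to package the pieces already established in \S\ref{sub:integrability}--\S\ref{sub:stationarity} and check that each bullet of the theorem is either one of those results or an immediate consequence. Since all the hard work of constructing the induced system, verifying integrability, and identifying its invariant measure has already been done, the proof of Theorem \ref{T:induced-RW} is primarily assembly.

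First I would recall Proposition \ref{prop:Bernoulli}: fixing a vertex $v$ and using the first return to $v$ as the stopping time, each orbit of $T^{(v)}$ concatenates iid first-return loops, so $F^{(v)}$ generates a Bernoulli random walk whose step distribution is $\mu^{(v)} = \sum_{\ell\in \mathcal{L}^{(v)}} p_\ell \cdot \delta_{f_\ell}$. Lemma \ref{lem:lp-integrability} then delivers the exponential-moment bullet. The observation immediately preceding that lemma, namely that $\supp(\mu^{(v)}) = \setdef{f_\ell}{\ell\in\mathcal{L}^{(v)}} \subset \operatorname{Per}^{(v)}$ together with Assumption \ref{standing_assumptions}, yields the Zariski-density bullet. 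Uniqueness of the $\mu^{(v)}$-stationary measure $\nu^{(v)}\in\Prob(B)$ then follows from the classical Furstenberg--Guivarc'h--Raugi--Goldsheid--Margulis result quoted at the start of \S\ref{sub:stationarity}, and simplicity $\Lambda^{(v)}\in\frak{a}^{++}$ follows from the integrability plus Zariski density via the same body of results.

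Next I would invoke Proposition \ref{prop:Markov_stat_msr} to conclude that the aggregated measure
\[
    \nu = \sum_{v\in V} \pi(v)\cdot \delta_v\otimes \nu^{(v)}
\]
is the unique stationary measure for the Markov chain on $V\times B$ defined by $(v,\xi)\mapsto (u, f_{vu}.\xi)$ with probability $p_{vu}$. This is the step where Zariski density feeds in at every fiber: uniqueness of each $\nu^{(v)}$ forces uniqueness of $\nu$ itself through the fiberwise disintegration argument in the proof of that proposition.

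Finally, Proposition \ref{prop:induced_lambdas} applied to $Y=Y^{(v)}$ gives $\Lambda = m(Y^{(v)})\cdot \Lambda^{(v)}$, and since $\Lambda^{(v)}\in\frak{a}^{++}$ and $m(Y^{(v)}) = \pi(v)>0$, the full Lyapunov spectrum $\Lambda$ is simple. The only mild care needed is to track which quantities depend on the chosen base vertex $v$: the induced step distribution $\mu^{(v)}$, its stationary measure $\nu^{(v)}$, and the scalar $\Lambda^{(v)} = \pi(v)^{-1}\Lambda$ all do, but the Zariski-density hypothesis (as remarked after Assumption \ref{standing_assumptions}) and the conclusion $\Lambda\in\frak{a}^{++}$ do not. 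The analogous statement for the reversed Markov chain follows by the same argument applied to $\check P$ with stationary vector $\check\pi$; the symmetry hypothesis $f_{uv}=f_{vu}^{-1}$ ensures that the time-reversed periodic data is again Zariski dense, and no new input is required. I do not expect a genuine obstacle here — the theorem is a clean repackaging, and the substantive steps (exponential moments via Lemma \ref{lem:bound}, and the fiberwise identification of the stationary measure) are already in place.
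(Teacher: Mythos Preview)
Your proposal is correct and matches the paper's approach exactly: the paper introduces Theorem~\ref{T:induced-RW} with ``Let us summarize'' and gives no separate proof, so it is indeed a repackaging of Proposition~\ref{prop:Bernoulli}, Lemma~\ref{lem:lp-integrability}, the Zariski-density remark after Assumption~\ref{standing_assumptions}, the classical Furstenberg--Guivarc'h--Raugi--Goldsheid--Margulis results quoted in \S\ref{sub:stationarity}, Proposition~\ref{prop:Markov_stat_msr}, and Proposition~\ref{prop:induced_lambdas}. One small wording point: the inclusion $\supp(\mu^{(v)})\subset\operatorname{Per}^{(v)}$ alone does not yield Zariski density---what you need (and what the paper uses) is that every element of $\operatorname{Per}^{(v)}$ is a product of first-return loop elements $f_\ell$, so the group generated by $\supp(\mu^{(v)})$ coincides with the group generated by $\operatorname{Per}^{(v)}$.
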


\section{Regularity of the Stationary Measure} \hfill{}\\
\label{sec:regularity}

Let $\nu\in \Prob(V\times B)$ be a stationary measure for the forward Markov Chain. Let $H:G/Z\overto{}\frak{a}^+$ be the Busemann-like function defined in Proposition~\ref{P:Busemann}.
\begin{proposition}\label{P:regularity}
	There exists $\tau>0$ so that 
	\[
		\sup_{(v,\xi)\in V\times B}\ \int_B e^{\tau\cdot \epsilon\|H(\xi,\eta)\|}\dd \nu^{(v)}(\eta)<+\infty.
	\]
	In particular, for every $p\in [1,\infty)$
	\[
		\|H(\xi,-)\| \in L^p(\nu^{(v)}),\qquad \|H(\xi,-)\| \in L^p(\check\nu^{(v)})
	\]
	for every $(v,\xi)\in V\times B$.
\end{proposition}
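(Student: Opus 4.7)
The strategy is to transfer the H\"older regularity of stationary measures for Zariski dense random walks with exponential moments to the Markovian setting, using Theorem~\ref{T:induced-RW} and Proposition~\ref{P:Busemann}.

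First, I fix $v\in V$. By Theorem~\ref{T:induced-RW} the measure $\nu^{(v)}\in\Prob(B)$ is the unique $\mu^{(v)}$-stationary measure, where $\mu^{(v)}$ has a finite exponential moment (cf. Lemma~\ref{lem:lp-integrability}) and $\Gamma_{\mu^{(v)}}=\langle\supp(\mu^{(v)})\rangle$ is Zariski dense in $G$. These are precisely the hypotheses of the classical H\"older regularity result for stationary measures on flag varieties (Guivarc'h--Raugi \cite{Guivarch-Raugi86}, or \cite{BQ_book}): under them there exists $s_v>0$ such that
\[
	\sup_{\xi\in B}\int_B \delta(\xi,\eta)^{-s_v}\dd\nu^{(v)}(\eta)<+\infty.
\]

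Next, I combine this with the inequality $e^{\|H(\xi,\eta)\|}\le \delta(\xi,\eta)^{-C}$ furnished by Proposition~\ref{P:Busemann}. Setting $\tau:=\min_{v\in V}\bigl(s_v/(2C)\bigr)$, which is strictly positive since $V$ is finite, I obtain for every $v\in V$ and every $\xi\in B$
\[
	\int_B e^{\tau\|H(\xi,\eta)\|}\dd\nu^{(v)}(\eta)\ \le\ \int_B \delta(\xi,\eta)^{-C\tau}\dd\nu^{(v)}(\eta)\ <\ +\infty,
\]
and the uniformity in $(v,\xi)$ follows because the supremum in $\xi$ is already uniform for each $v$ and there are only finitely many $v$.

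For the time-reversed measure $\check\nu^{(v)}$ the argument is identical, applied to the reverse Markov chain $(\check P,\check\pi)$ described in \S\ref{sub:mixingSFT}: the symmetry assumption $f_{uv}=f_{vu}^{-1}$ transfers Zariski density of the periodic data to the reversed walk, and the analogue of Lemma~\ref{lem:bound} for $\check P$ (whose entries are strictly positive on the same edge set as $P$) yields an exponential moment. The $L^p$ conclusion is then automatic: for every $p\in [1,\infty)$ one has the elementary bound $x^p\le (p/\tau)^p e^{\tau x}$ on $[0,\infty)$, so exponential integrability of $\|H(\xi,-)\|$ gives $L^p$-integrability for all $p$.

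The main technical input, and the only place where genuine work is hidden, is the uniform H\"older estimate invoked in the first step. This is not proven here --- it is a deep theorem in the theory of random walks on semisimple Lie groups --- but all of its hypotheses are supplied by Theorem~\ref{T:induced-RW} (Zariski density and exponential moment of $\mu^{(v)}$), so in the present paper the proposition reduces to packaging a known result about Bernoulli walks on $G$ with the higher-rank Busemann bound from Proposition~\ref{P:Busemann} and the finiteness of $V$.
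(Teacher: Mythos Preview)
Your proof is correct and follows essentially the same route as the paper's: reduce to the induced Bernoulli walk via Theorem~\ref{T:induced-RW}, invoke the known H\"older regularity of the $\mu^{(v)}$-stationary measure (Guivarc'h--Raugi, \cite{BQ_book}), and combine with the bound $e^{\|H(\xi,\eta)\|}\le\delta(\xi,\eta)^{-C}$ from Proposition~\ref{P:Busemann}. The only cosmetic differences are that the paper phrases the regularity estimate for each $\delta_\alpha$ separately (taking a sup over all of $\bbP(V_\alpha^*)$ rather than over $\xi\in B$) and cites Jensen's inequality rather than the elementary bound $x^p\le (p/\tau)^p e^{\tau x}$ for the $L^p$ conclusion.
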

\begin{proof}
	We work with the stationary measure $\nu$, the argument for $\check\nu$ being analogous.
	Fix a vertex $v\in V$ and consider the Kakutani induced system $(Y^{(v)},m^{(v)},T^{(v)})$ and $F^{(v)}:Y^{(v)}\overto{} G$ as in Proposition~\ref{T:induced-RW}. 
	
	This is a Bernoulli system with a law that has exponential moment generates a Zariski dense subgroup.
	It follows from the results of Guivarc'h--Raugi \cite{Guivarch-Raugi86}, Gol'dsheid--Margulis \cite{GoldsheidMargulis89}, and some estimates of Bourgain--Furman--Lindenstrauss--Mozes \cite{bourgain2011stationary}
	(see also \cite{BQ_book}*{Theorem 14.5}) these stationary measures are \emph{regular} in the sense that small neighborhoods of
	hyperplanes have exponentially small $\nu^{(v)}$-mass. 
	Specifically, the following estimates hold. 
	Consider the $G$-representations $(\rho_\alpha,V_\alpha)$ indexed by simple roots $\alpha\in \Pi$, the maps
	\[
		p_\alpha:B\overto{} \bbP(V_\alpha),\qquad q_\alpha:B\overto{} \bbP(V_\alpha^*)
	\]
	and $\delta_\alpha:V^*_\alpha\times V_\alpha\overto{} [0,\infty)$ as in \S \ref{sub:semi_simple}.
	Then there exists a positive constant $t>0$ and $M$ such that for all $\alpha\in \Pi$
	\[
		\sup_{\phi\in\bbP(V_\alpha^*)} \int_B \frac{1}{\delta_\alpha(\phi,p_\alpha(\eta))^{t}}\dd\nu^{(v)}(\eta) \ \le M<\infty.
	\]
	Using the fact that
	\[
		e^{\tau\cdot \|H(\xi,\eta)\|}\le \max_{\alpha\in\Pi} \frac{1}{\delta_\alpha(q_\alpha(\xi),p_\alpha(\eta))^{C\tau}}\le \sum_{\alpha\in \Pi} \frac{1}{\delta_\alpha(q_\alpha(\xi),p_\alpha(\eta))^{C\tau}}
	\]
	and taking $\tau<t/C$, we get for any $\xi\in B$
	\[
		\int_B e^{\tau\cdot \|H(\xi,\eta)\|}\dd\nu^{(v)}(\eta)\le\int_B  \sum_{\alpha\in \Pi}  \frac{1}{\delta_\alpha(\phi,p_\alpha(\eta))^{C\tau}}\dd\nu^{(v)}(\eta)\le M\cdot |\Pi|<\infty.
	\]
	Jensen's inequality implies that for any $p<\infty$
	\[
		\int_B \|H(\xi,\eta)\|^p\dd\nu^{(v)}(\eta)<\infty.
	\] 
\end{proof}

\section{Proof of CLT via Martingale Differences} \hfill{}\\
\label{sec:martingale_proof}

{\it Next we apply the martingale difference method by Benoist--Quint \cite{BQ_CLT16} for the CLT for iid random matrix products to the Markovian case.  
The steps are as follows. 
\begin{enumerate}
\item We restate Brown's CLT \S \ref{sub:abstract_framework}, and use the stationary measures constructed in \S \ref{sub:stationarity}, $\nu^{(v)}$ and $\nu$ on $B^{(v)}$ and $V \times B$ respectively.
\item In \S \ref{sub:cocycle_setup} we set up the Benoist--Quint machinery with stationary measure $\nu^{(v)}$ indexed by $v \in V$. The Benoist--Quint construction assumed the Bernoulli system was built on an underlying Markov process, where the Markov chain on $V \times B$ consists of only vertex paths at $v \in V$ that are self-loops.
\item $\nu^{(v)}$ is regular for the restricted random walk at each $v \in V$ over loops $\calL^{(v)}$. The Benoist--Quint proof for $V = \{v\}$ transfers to more general $V$ showing $\nu$ is also regular. In \S \ref{sub:centering}-\ref{sub:lindeberg_condition} we check the three conditions of the Brown Martingale CLT hold.
\end{enumerate}
}

\subsection{The Gaussian law as a symmetric 2-tensor} \hfill{}\\
\label{sub:gaussian-tensor}

{\it Let $E$ be an $n$-dimensional normed real vector space,
define Gaussian laws $N_{\Phi}$ on $E$ using the covariance $2$-tensor
$\Phi$. If the Euclidean structure of $E$ is fixed this is the covariance matrix of $N_{\Phi}$.} 

\medskip

If $S^2(E)$ is the space of symmetric 2-tensors on $E$, the subspace of $E^2 = E \bigotimes E$ spanned by $\mathbf{v}^2 = \mathbf{v} \otimes \mathbf{v}$ for $\mathbf{v} \in E$, the space of quadratic forms, equivalently the symmetric bilinear functionals on the dual space $E^*$. 
The linear span of a symmetric $2$-tensor $\Phi$ is defined as the smallest vector subspace $E_{\Phi} \subset E$ such that $\Phi \in S^2(E_{\Phi})$. $\Phi \in S^2(E)$ is nonnegative ($\Phi \geq 0$) if it is a nonnegative quadratic form on the dual $E^*$, that is for $\mathbf{v} \in E$ have $\mathbf{v}^T \Phi \mathbf{v} \geq 0$ where $\mathbf{v}^* = \mathbf{v}^T$. Any $ \mathbf{v} \in E$ set $v^{2} = \mathbf{v} \otimes \mathbf{v} = \mathbf{v} \cdot \mathbf{v}^T \in S^2(E)$. Define the unit ball about $\Phi$ to be 
\[
	B_{\Phi} = \setdef{\mathbf{v} \in E_{\Phi}}{\mathbf{v}^{2} \leq \Phi} = \setdef{\mathbf{v} \in E_{\Phi}}{ \Phi - \mathbf{v}^{2} \text{ nonnegative quadratic form}}.
\] 
Let $\Phi \in S^2(E)$ nonnegative, define $N_{\Phi}$ be the centered Gaussian distribution on $E$ with covariance 2-tensor $\Phi = \int_E \mathbf{v}^{2} \dd N_{\Phi}(\mathbf{v})$. 
For example $N_{\Phi}$ is the Dirac mass at 0 iff $\Phi \equiv \mathbf{0}_{n \times n}$ iff $E_{\Phi}=\{0\}$.
More generally \cite{BQ_book}*{Eq. (12.1)}, 
\begin{equation}
N_{\Phi} = (2\pi)^{-\frac{1}{2}\dim E_{\Phi}} e^{\frac{1}{2}\Phi^*(\mathbf{v})} \, \dd \mathbf{v}
\end{equation}
where $E_{\Phi}$ is the support of the law,  $\Phi^*$ is the dual of $\Phi$, and $\dd \mathbf{v}$ is the Lebesgue measure on $E_{\Phi}$ assigning mass one to the parallelepipeds in $E_{\Phi}$  whose sides form an orthonormal basis of $\Phi$.

Note that the regular outer product gives a tensor $\mathbf{v}^2 = \mathbf{v} \otimes \mathbf{v} = \mathbf{v} \cdot \mathbf{v}^T$, 
more generally $\Phi(\mathbf{v}, \mathbf{w}) = \mathbf{v}^T \Phi \mathbf{w}$ where transposition maps a vector to its dual.

\subsection{Brown Martingale CLT} 
\label{sub:abstract_framework} \hfill{}\\

{\it In this section we formulate the version of Brown's CLT that we use.}

\medskip

Let $(\Omega,P)$ be a standard probability space, $E$ a finite dimensional Euclidean space,
and $\calB_{n,k}$ be complete sub-$\sigma$-algebras on $(\Omega,P)$, where
\[
	\calB_{n,0}\subset \calB_{n,1}\subset \dots \subset \calB_{n,n}\qquad (n\in\bbN).
\]
We assume $\bbE(\|Y_{n,k}\|^2)<\infty$.
We shall use the following version of

\begin{theorem}[Brown \cite{BrownCLT71}]\label{T:Brown}\hfill{}\\
	Let $\setdef{Y_{n,k}}{n\in\bbN,\ k\in\{0,\dots,n\}}$ be a triangular array of $E$-valued random variables,
	where each $Y_{n,k}$ has finite variance.
	Assume that the following conditions hold: 
	\begin{enumerate}
		\item Martingale Property. Each $Y_{n,k}$ is $\calB_{n,k}$-measurable and
		\[
			\bbE(Y_{n,k} \mid \calB_{n,k-1})=0
			\qquad (1\le k\le n,\ n\in\bbN).
		\]
		\item 
		Covariance.
		There exists a non-degenerate positive symmetric quadratic form $\Phi$ on $E$ so that
		\[
			W_n:=\frac{1}{n}\sum_{k=1}^n \bbE(Y_{n,k}\otimes Y_{n,k}\mid \calB_{n,k-1})\ \overto{}\ \Phi
		\]
		converges in probability.
		\item Lindeberg condition. For any $\epsilon>0$ we have
		\[
			W_{n,\epsilon}:=\frac{1}{n}\sum_{k=1}^n \bbE(Y_{n,k}\otimes Y_{n,k}\cdot1_{\|Y_{n,k}\|
			\ge \epsilon}\mid \calB_{n,k})\ \overto{}\ 0
		\]
		converges in probability. 
	\end{enumerate} 
	Then the random variables 
	\[
		Z_n:=\frac{1}{\sqrt{n}}(Y_{n,1}+Y_{n,2}+\dots+Y_{n,n})
	\] 
	converge in law to the normal distribution $N_\Phi$ with covariance $\Phi$, i.e. 
	\[
		\bbE(\psi(Z_n)) \ \overto{} \ \frac{1}{\sqrt{(2\pi)^{\dim E}\det\Phi}} \int_E \psi(v)\cdot e^{-\frac{\Phi^{-1}(v,v)}{2}}
		\dd v
	\]
	for any $\psi\in C_c(E)$.
\end{theorem}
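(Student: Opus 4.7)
The plan is to prove convergence in law via the classical characteristic function approach combined with McLeish's martingale trick, then lift to $E$ by the Cram\'er--Wold device. Fix $t \in E$ and set $\xi_{n,k} := \langle t, Y_{n,k}\rangle/\sqrt{n}$, so that $(\xi_{n,k})_{k=1}^n$ is a scalar martingale-difference array adapted to $(\calB_{n,k})$. It suffices to show
\[
\bbE\!\left[\exp(i\langle t, Z_n\rangle)\right] \;\longrightarrow\; \exp\!\left(-\tfrac{1}{2}\Phi(t,t)\right)
\]
for every $t$, since non-degeneracy of $\Phi$ together with the univariate statement for each $t$ will identify the limit law as $N_\Phi$.

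The core device is the complex telescoping product
\[
T_n := \prod_{k=1}^n (1 + i\,\xi_{n,k}).
\]
By condition~(1) and iterated conditional expectations, $\bbE[T_n]=1$. Using the pointwise identity $\log(1+iy) = iy - \tfrac{y^2}{2} + O(|y|^3)$ for small real $y$, one writes, on the event that $\max_k |\xi_{n,k}| \le 1$,
\[
T_n \;=\; \exp\!\Bigl(i\sum_{k=1}^n \xi_{n,k}\Bigr)\cdot\exp\!\Bigl(\tfrac{1}{2}\sum_{k=1}^n \xi_{n,k}^2\Bigr)\cdot R_n,
\]
where $|\log R_n| = O\bigl(\sum_k |\xi_{n,k}|^3\bigr)$. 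The Lindeberg condition~(3) forces $\max_k |\xi_{n,k}| \to 0$ in probability and, combined with the $L^2$-bound supplied by~(2), yields $\sum_k |\xi_{n,k}|^3 \to 0$ and hence $R_n \to 1$ in probability. The covariance condition~(2) gives $\sum_k \bbE[\xi_{n,k}^2 \mid \calB_{n,k-1}] \to \Phi(t,t)$; Doob's inequality applied to the martingale with increments $\xi_{n,k}^2 - \bbE[\xi_{n,k}^2\mid\calB_{n,k-1}]$, combined with uniform smallness, upgrades this to $\sum_k \xi_{n,k}^2 \to \Phi(t,t)$ in probability. Rearranging the display for $T_n$ and taking expectations yields the desired limit.

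The principal technical obstacle is passing from convergence in probability to convergence of expectations, since the factor $\exp(\tfrac{1}{2}\sum_k \xi_{n,k}^2)$ is a priori unbounded. The standard remedy is a two-stage truncation: replace each $\xi_{n,k}$ by its recentered $\epsilon$-truncation
\[
\xi_{n,k}^{(\epsilon)} := \xi_{n,k}\cdot 1_{|\xi_{n,k}|\le\epsilon} - \bbE\!\left[\xi_{n,k}\cdot 1_{|\xi_{n,k}|\le\epsilon}\mid\calB_{n,k-1}\right],
\]
which preserves the martingale-difference structure and makes the telescoping product uniformly bounded, apply the bounded-convergence argument above to the truncation, and then show via condition~(3) that the $L^1$-discrepancy between the truncated and original characteristic functions tends to $0$ uniformly in $n$ as $\epsilon \to 0$. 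The multidimensional conclusion $Z_n \Rightarrow N_\Phi$ then follows from the one-dimensional limit holding simultaneously for all $t$ by Cram\'er--Wold.
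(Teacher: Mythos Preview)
The paper does not prove this theorem; it is quoted from Brown's 1971 paper and used as a black box (the surrounding text in \S\ref{sub:abstract_framework} immediately moves on to explaining how Benoist--Quint apply it). So there is no ``paper's own proof'' to compare your attempt against.

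Your sketch is the McLeish characteristic-function/telescoping-product method combined with Cram\'er--Wold, which is a standard and correct route to martingale CLTs of this type, though historically it is not Brown's original argument. Two minor comments on the execution: the step ``Doob's inequality \dots upgrades $\sum_k \bbE[\xi_{n,k}^2\mid\calB_{n,k-1}]\to\Phi(t,t)$ to $\sum_k \xi_{n,k}^2\to\Phi(t,t)$'' is not literally a Doob inequality; what makes the difference $\sum_k\bigl(\xi_{n,k}^2-\bbE[\xi_{n,k}^2\mid\calB_{n,k-1}]\bigr)$ go to zero in probability is that the Lindeberg condition lets you truncate to bounded increments, after which orthogonality of martingale differences gives an $L^2$ bound. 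Second, the uniform-integrability issue for $T_n$ is the genuine heart of the McLeish argument, and your recentered-truncation fix is the right idea, but one should note that the recentered truncation no longer satisfies $|\xi_{n,k}^{(\epsilon)}|\le\epsilon$ pointwise (only $\le 2\epsilon$), which is harmless but worth tracking. With these caveats the outline is sound.
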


In the paper \cite{BQ_CLT16} Benoist and Quint study the CLT for products $g_n\cdots g_1$ of iid $G$-valued random variables by
studying the following sequences of $\frak{a}$-valued
random variable 
\[
	\frac{1}{\sqrt{n}}\left(\sigma(g_n\cdots g_1,\xi_n)-n\Lambda\right)
\]
where $\xi_n\in B=G/P$ is an arbitrary sequence, and $g_1,\dots, g_n$ are iid in $G$.
Since $\sigma:G\times B\to\frak{a}$ is a cocycle, one has
\begin{equation}\label{e:BQ-main}
	\frac{1}{\sqrt{n}}\left(\sigma(g_n\cdots g_1,\xi_n)-n\Lambda\right)=\sum_{k=1}^n \frac{1}{\sqrt{n}}(\sigma(g_k,g_{k-1}\cdots g_1.\xi_n)-\Lambda).
\end{equation}
One would like the terms of the sum on the RHS to form martingale differences with respect to the sigma-algebra $\calB_{n,k}$ determined by $g_1,\dots,g_k$, like $Y_{n,k}$ in Brown's theorem.
While these terms themselves do not have the required property, Benoist and Quint show that there exists continuous function $\phi:B\to\frak{a}$ so that 
replacing $\sigma$ by the cohomologous cocycle
\[
	\sigma'(g,\xi)=\sigma(g,\xi)-\phi(g.\xi)+\phi(\xi)
\]
one obtains a \emph{centered cocycle}, which means that 
\[
	\forall \xi\in B,\qquad \bbE(\sigma'(x,\xi))=\Lambda.
\]
With such $\phi$ at hand, the random variables
\[
	Y_{n,k}=\frac{1}{\sqrt{n}}\left(\sigma'(g_k,g_{k-1}\cdots g_1.\xi_n)-\Lambda\right)
\]
do form martingale differences.
Benoist--Quint verify the other properties in Brown's theorem and conclude that the sums 
\[
	Z_n=Y_{n,1}+\dots+Y_{n,n}=\frac{1}{\sqrt{n}}\left(\sigma'(g_n\cdots g_1,\xi_n)-n\Lambda\right)
\]
satisfy the CLT. Since these sums are $2\|\phi\|/\sqrt{n}$ close to the random variables (\ref{e:BQ-main}) the required CLT follows.
\bigskip

Trying to adapt this approach to the more general setting than random walks we are interested in
the behavior of the random variables
\begin{equation}\label{e:RV1}
	\frac{1}{\sqrt{n}}\left(\sigma(F_n(x),\xi_n)-n\Lambda\right)=\sum_{k=0}^{n-1} \frac{1}{\sqrt{n}}\left(\sigma(F(T^kx),F_{k-1}(x).\xi_n)-\Lambda\right)
\end{equation}
It is convenient to think of the function $s:X\times B\to \frak{a}$ defined by
\[
	s(x,\xi)=\sigma(F(x),\xi)
\]
and the corresponding $\bbZ$-cocycle
\[
	s_n(x,\xi)=\sum_{k=0}^{n-1} s\circ T_F^k (x,\xi)=\sum_{k=0}^{n-1} s(T^kx,F_{k-1}(x).\xi).
\]
Then we can view (\ref{e:RV1}) as
\[
	\frac{1}{\sqrt{n}}\left(s_n(x,\xi_n)-n\Lambda\right)=\sum_{k=0}^{n-1}  \frac{1}{\sqrt{n}}\left(s(T^kx,F_{k-1}(x).\xi)-\Lambda\right).
\]
Once again, the terms in the RHS do not form martingale differences, but we will find a function $h:X\times B\overto{} \frak{a}$ so that
replacing $s$ by a cohomologous function
\begin{equation}\label{e:s-prime}
	s'(x,\xi)=s(x,\xi)-h\circ T_F(x,\xi)+h(x,\xi)
\end{equation}
that will produce martingale differences.
\subsection{Centering the cocycle} 
\label{sub:cocycle_setup} \hfill{}\\
In our special case of Markov chains we have 
\[
	s(x,\xi)=\sigma(F(x),\xi)=\sigma(f_{x_0x_1},\xi)
\]
We shall alter this $s:X\times B\overto{}\frak{a}$ by a coboundary, to get
\[
	s'(x,\xi)=s(x,\xi)-h\circ T_F(x,\xi)+h(x,\xi)
\]
where $h:X\times B\overto{}\frak{a}$ descends to a continuous function
\[
	h_0:V\times B\ \overto{}\ \frak{a},\qquad h(x,\xi)=h_0(x_0,\xi).
\]
The function $h_0$ is defined by integrating the Busemann function $H$ (see Proposition~\ref{P:Busemann}) 
against the stationary measure $\check\nu$ associated with the reversed Markov chain:
\begin{equation}\label{e:def-h0}
	h_0(v,\xi)=\int_B H(\xi,\zeta)\dd\check\nu^{(v)}(\zeta).
\end{equation}
Note that Proposition~\ref{P:regularity} guarantees that the integral is well defined and gives a continuous function. 
We also need to define a function
\[
	L:X\overto{}\frak{a}
\]
that descends to $L_0:E\ \overto{} \frak{a}$ via $L(x)=L_0(x_0,x_1)$ where
\[
	L_0(v,u)=\int_B \iota \sigma(f_{vu}^{-1},\zeta)\dd\nu^{(v)}(\zeta).
\]
Note that 
\[
	\sum_{v}\pi(v)p_{vu} L_0(v,u)=-\Lambda.
\]
We will show that for every sequence $\xi_n\in B$ the random variables 
satisfy the conditions of Brown's theorem \ref{T:Brown}.
\begin{equation}\label{e:Ynk}
		Y_{n,k}=\frac{1}{\sqrt{n}}\left(s'\circ T_F^{k-1}(x,\xi_n)+L(T^{k-1}x) \right)
\end{equation}
We will need the following
\begin{lemma}
	There exists $\psi:V\to\ \frak{a}$ so that $L_0(u,v)=-\Lambda+\psi(u)-\psi(v)$.
\end{lemma}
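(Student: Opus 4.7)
The plan is to produce $\psi$ by showing that $L_0 + \Lambda$ is exact as a $1$-cochain on the edge set $E$. Since $\mathcal{G}$ is connected, a $1$-cochain $c: E \to \mathfrak{a}$ is the coboundary $c(u,v) = \psi(u) - \psi(v)$ of a vertex function if and only if $c$ has vanishing integral on every closed loop. Thus it suffices to establish the cycle condition
\[
	\sum_{i=0}^{n-1} \bigl[L_0(v_i, v_{i+1}) + \Lambda\bigr] = 0
\]
for every closed directed loop $(v_0, v_1, \ldots, v_n = v_0)$ in $\mathcal{G}$. By decomposing an arbitrary closed loop into first-return loops at a chosen base vertex, it is enough to verify the identity $\sum_{i=0}^{n-1} L_0(v_i, v_{i+1}) = -n\Lambda$ for first-return loops $\ell = (v, v_1, \ldots, v_{n-1}, v)$ of length $n$ at a vertex $v \in V$.

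For a first-return loop, I expand each summand via the Busemann identity of Proposition~\ref{P:Busemann}, applied with auxiliary reference points $\xi_i = f_{v_0 v_1 \cdots v_i}\,\xi_0$ along the loop, using the symmetry $f_{v_{i+1}v_i} = f_{v_i v_{i+1}}^{-1}$ so that $f_{v_{i+1}v_i}\xi_{i+1} = \xi_i$. This gives
\[
	\iota\sigma(f_{v_{i+1}v_i}, \zeta) = H(\xi_i, f_{v_{i+1}v_i}\zeta) - H(\xi_{i+1}, \zeta) + \sigma(f_{v_i v_{i+1}}, \xi_i).
\]
Integrating against $\nu^{(v_i)}(\zeta)$ and summing over $i$, the explicit cocycle contributions assemble into $\sigma(f_\ell, \xi_0)$, while the $H$-terms leave behind boundary pieces and pushforward integrals that reduce, via $v_n = v_0 = v$ and the forward stationarity $\pi(v)\nu^{(v)} = \sum_u \pi(u) p_{uv} (f_{uv})_* \nu^{(u)}$, to quantities based at $v$ alone. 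To match the $\sigma(f_\ell, \xi_0)$ piece to $n\Lambda$, I apply the Benoist--Quint centering identity for the Bernoulli induced system $(Y^{(v)}, m^{(v)}, T^{(v)})$ supplied by Theorem~\ref{T:induced-RW}, namely
\[
	\int_{G} \bigl[\sigma(g, \xi) - h_0(v, g\xi) + h_0(v, \xi)\bigr]\, d\mu^{(v)}(g) = \Lambda^{(v)} = \Lambda/\pi(v)
\]
for every $\xi \in B$, combined with the Kac relation $\pi(v) \Lambda^{(v)} = \Lambda$. The boundary contributions coming from the Busemann expansion cancel against the $h_0$-terms that appear in the Benoist--Quint centering, yielding $\sum_{i=0}^{n-1} L_0(v_i, v_{i+1}) = -n\Lambda$.

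Once the cycle condition is established, the existence of $\psi$ is a formal consequence of graph cohomology: fix a base vertex $v_* \in V$, set $\psi(v_*) = 0$, and for each $v \in V$ choose any path $v_* = u_0, u_1, \ldots, u_k = v$ in $\mathcal{G}$ (which exists by connectedness) and set
\[
	\psi(v) = \sum_{j=0}^{k-1} \bigl[L_0(u_j, u_{j+1}) + \Lambda\bigr].
\]
The cycle condition guarantees independence from the chosen path, so $\psi: V \to \mathfrak{a}$ is well-defined, and by construction $L_0(u,v) + \Lambda = \psi(u) - \psi(v)$ for every edge $(u, v) \in E$. A sanity check against the stated aggregate identity $\sum_{(v,u) \in E} \pi(v) p_{vu} L_0(v, u) = -\Lambda$ is automatic, since $\sum_{(v,u)} \pi(v) p_{vu}\,[\psi(v) - \psi(u)] = 0$ by stationarity of $\pi$.

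The main obstacle is the first-return loop identity: the cycle condition is a pointwise (not merely averaged) statement, so the Busemann telescoping must be performed delicately, aligning the $\iota\sigma$-integrals against the $\nu^{(v_i)}$-measures along the loop with the iid structure of the induced random walk at $v$. The symmetry $f_{uv}^{-1} = f_{vu}$ is essential for converting $\iota\sigma(f^{-1}, \cdot)$ into the $\sigma(f, \cdot)$ terms controlled by Benoist--Quint, and the Zariski density of the periodic data together with the regularity of $\nu^{(v)}$ from Proposition~\ref{P:regularity} ensures all Busemann integrals are finite and the telescoping is legitimate.
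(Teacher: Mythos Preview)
Your reformulation via the cycle condition is correct in principle: on a connected graph, $L_0+\Lambda$ is a coboundary iff it vanishes on every closed loop, and your path-integral definition of $\psi$ is the standard construction once the cycle condition is known. The difficulty is entirely in your proposed proof of the cycle condition itself, and there the argument does not close.

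The cycle condition $\sum_{i=0}^{n-1}L_0(v_i,v_{i+1})=-n\Lambda$ is a \emph{pointwise} statement: it must hold for each individual loop $\ell$. But both ingredients you invoke are \emph{averaged} identities. The Benoist--Quint centering $\int_G[\sigma(g,\xi)-h_0(v,g\xi)+h_0(v,\xi)]\,d\mu^{(v)}(g)=\Lambda^{(v)}$ integrates over all first-return loops with their $\mu^{(v)}$-weights; it says nothing about $\sigma(f_\ell,\xi_0)$ for a single $\ell$, and indeed $\sigma(f_\ell,\xi_0)$ varies from loop to loop and is not equal to $|\ell|\Lambda$. Likewise, forward stationarity $\pi(v)\nu^{(v)}=\sum_u\pi(u)p_{uv}(f_{uv})_*\nu^{(u)}$ is a convex combination over all incoming edges, so it cannot reduce the $H$-integrals $\int_B H(\xi_i,f_{v_{i+1}v_i}\zeta)\,d\nu^{(v_i)}(\zeta)$ and $\int_B H(\xi_{i+1},\zeta)\,d\nu^{(v_i)}(\zeta)$ to a telescoping sum along one fixed loop: the measures $(f_{v_{i+1}v_i})_*\nu^{(v_i)}$ and $\nu^{(v_{i-1})}$ are simply not equal, only related after averaging over neighbors. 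So after your Busemann expansion you have $\sum_i L_0(v_i,v_{i+1})=\sigma(f_\ell,\xi_0)+(\text{non-telescoping }H\text{-terms})$, and neither piece is under control for a fixed $\ell$.

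The paper sidesteps this obstruction by constructing $\psi$ (in the guise of the forward analogue $\phi$) directly, rather than via the cycle condition. Fixing a base vertex $w$, it sets $\phi(v)=\int_{Y^{(w)}}\int_B\bigl(n_v(y)\Lambda-\sigma(F_{n_v(y)}(y),\xi)\bigr)\,d\nu^{(w)}(\xi)\,dm^{(w)}(y)$, the expected accumulated deficit up to the first hitting time of $v$. The edge identity $\phi(v)-\phi(u)=\Lambda-\int_B\sigma(f_{uv},\xi)\,d\nu^{(u)}(\xi)$ then follows from the strong Markov property together with the representation $\nu^{(u)}=\int_{Y^{(w)}}F_{n_u(y)}(y)_*\nu^{(w)}\,dm^{(w)}(y)$: one step from $u$ to $v$ simply extends the stopped process. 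This is a genuinely different mechanism from yours, and it is what makes the per-edge identity accessible without ever needing a pointwise loop calculation.
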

\begin{proof}
	It would be more appropriate to define $\psi$ using reversed Markov chain $\iota\circ \sigma$ and measures $\check\nu^{(w)}$ on $B^{(w)}$,
	but for readability reasons we will use the forward Markov chain and describe a similar function $\phi:V\overto{} \frak{a}$.
	
	Fix a vertex $w\in V$ to serve a base point, and set $\phi(w)=0$.
	Consider the Markov chain on $V\times B$ with transitions (\ref{e:VB-MC}) and initial distribution $\delta_w\otimes \nu^{(w)}$ on $B^{(w)}$.
	For $v\in V\setminus \{w\}$ consider the stopping time for the Markov chain defined by visiting the fiber $B^{(v)}$, i.e. visit to $v$ for the Markov chain on $\calG$,
	and let $\psi(v)$ be the expected accumulation in $\Lambda-\sigma(-,-)$ along this path.
	In ergodic-theoretic terms, 
	\[
		\phi(v)=\int_{Y^{(w)}}\int_B \left(n_v(y)\cdot \Lambda-\sigma(F_{n_v(y)}(y),\xi)\right)\dd\nu^{(w)}(\xi)\dd m^{(w)}(y).
	\]
	where $n_v:X\to \bbN\cup\{\infty\}$ is the first visit to $v$,
	namely
	\[
		n_v(x)=\inf\setdef{k\in\bbN}{x_k=v}.
	\]
	Note that $n_v$ has exponentially decaying tail:
	$m^{(w)}\setdef{y\in Y^{(w)}}{n_v(y)>k}\le A\lambda^k$ for some fixed $\lambda<1$ and some $A$
	by an argument similar to that in Lemma~\ref{lem:bound}.
	Since $\norm{n_v(y)\Lambda - \sigma(F_{n_v(y)}(y),\xi)}\le Cn_v(y)$ for some fixed $C$, it follows that $\phi$ is well defined.
	
	One can use  
	\[
		\sum_{u} \pi(u)p_{uv}\int_{B}\left(\Lambda-\sigma(f_{uv},\xi)\right)\dd\nu^{(u)}(\xi)=0,
	\]
	and Kakutaki induction to show
	\[
		\int_{Y^{(w)}}\left(n_w(y)\Lambda-\sigma(F_{n_w(y)}(y),\xi)\right)\dd\nu^{(w)}(\xi)\dd m^{(w)}(y)=0.
	\]
	Stationarity of $\nu$ implies
	\[
		\nu^{(u)}=\int_{Y^{(w)}} F_{n_u(y)}(y)_*\nu^{(w)}\dd m^{(w)}(y).
	\]
	Consider an edge $(u,v)\in E$ and $\phi(v)-\phi(u)$
	\[
		\phi(v)-\phi(u)=\int_{B}(\Lambda-\sigma(f_{uv},\xi))\dd \nu^{(u)}(\xi)=\Lambda-\int_{B}\sigma(f_{uv},\xi)\dd \nu^{(u)}(\xi).
	\]
	A similar computation for the reversed Markov chain proves the claim.
\end{proof}

\subsection{Martingale property} \hfill{}\\
\label{sub:centering}

{\it Next, center the cocycle using the stationary measure $\nu$ via the equivalence of the Iwasawa cocycle $\sigma$ and the Busemann-like cocycle $H$ in Proposition \ref{P:Busemann}. We show the first condition of the Brown Martingale CLT is satisfied.}

\medskip

We fix a sequence $\xi_n\in B$ and create a triangular array of random variables $\{Y_{n,k}\}$, $1\le k\le n$, as in (\ref{e:Ynk}). 
More specifically 
\[
	Y_{n,1}=\frac{1}{\sqrt{n}}\left(\sigma(f_{x_0x_1},\xi_n)-h_0(x_1,f_{x_0x_1}.\xi_n)+h_0(x_0,\xi_n)+L_0(x_0,x_1)\right)
\]
and 
\[
	\begin{split}
	Y_{n,k}=\frac{1}{\sqrt{n}}&\left(\sigma(f_{x_{k-1}x_k},F_{k-1}(x).\xi_n)-h_0(x_k,F_{k}(x).\xi_n)\right.\\
	&\left. +h_0(x_{k-1},F_{k-1}(x).\xi_n)+L_0(x_{k-1},x_k)\right).
	\end{split}
\]
The underlying probability space is $(X,m)$. It is equipped with the sigma-algebras $\calB_k$ determined by the values of $x_i$ with $0\le i\le k$.
\[
	\calB_0 \subset \calB_1 \subset \dots \subset \calB_n.
\]
Note that $Y_{n,k}$ is $\calB_k$-measurable. 
\begin{proposition}\label{prop:centering}
	We have 
	\[
		\cexp{Y_{n,k}}{\calB_{k-1}}=0.
	\]
	and therefore the sequence
	\[
		Z_n=Y_{n,1}+\dots +Y_{n,n}
	\]
	forms a martingale.
\end{proposition}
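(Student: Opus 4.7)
The plan is to verify $\cexp{Y_{n,k}}{\calB_{k-1}}=0$ as a pointwise identity in $(u,\eta)$. Since $\calB_{k-1}$ determines $x_0,\dots,x_{k-1}$, the only randomness in $Y_{n,k}$ given $\calB_{k-1}$ is the choice of $x_k$, with conditional law $\Pr(x_k=v\mid\calB_{k-1})=p_{uv}$ where $u:=x_{k-1}$. Writing $\eta:=F_{k-1}(x).\xi_n$, which is $\calB_{k-1}$-measurable, and noting that $F_k(x).\xi_n=f_{uv}\eta$ when $x_k=v$, the martingale property reduces to
\[
\sum_{v}p_{uv}\Bigl[\sigma(f_{uv},\eta)-h_0(v,f_{uv}\eta)+h_0(u,\eta)+L_0(u,v)\Bigr]=0\qquad(u\in V,\ \eta\in B).
\]

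The heart of the argument is the Busemann identity of Proposition~\ref{P:Busemann}, $\sigma(f_{uv},\eta)=H(f_{uv}\eta,f_{uv}\beta)-H(\eta,\beta)-\iota\sigma(f_{uv},\beta)$, valid whenever $(\eta,\beta)$ is in general position. I would integrate this in $\beta$ against the measure $(f_{uv}^{-1})_{\ast}\check\nu^{(v)}$, so that $f_{uv}\beta$ is $\check\nu^{(v)}$-distributed; Proposition~\ref{P:regularity} ensures the integrals converge and that general position holds almost surely. The first summand becomes $h_0(v,f_{uv}\eta)$ after the change of variable $\beta'=f_{uv}\beta$ and the definition of $h_0$, while the cocycle identity $\sigma(g,g^{-1}\xi)=-\sigma(g^{-1},\xi)$ combined with $f_{uv}^{-1}=f_{vu}$ rewrites the $\iota\sigma$ contribution as $-\int_B\iota\sigma(f_{vu},\beta')\dd\check\nu^{(v)}(\beta')$.

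The next step is to sum over $v$ with weights $p_{uv}$ and apply the stationarity equation
\[
\check\nu^{(u)}=\sum_{v}p_{uv}(f_{uv}^{-1})_{\ast}\check\nu^{(v)}
\]
for the time-reversed Markov chain on $V\times B$, which follows from the interpretation of $\check\nu^{(v)}$ as the conditional law of $\eta_+$ given $x_0=v$ together with the equivariance $\eta_+(Tx)=F(x).\eta_+(x)$. The middle integral $\int H(\eta,\beta)$ then collapses to $h_0(u,\eta)$, cancelling the $h_0(u,\eta)$ already present in the sum, and we are left with
\[
\sum_{v}p_{uv}\bigl[\sigma(f_{uv},\eta)-h_0(v,f_{uv}\eta)+h_0(u,\eta)\bigr]=\sum_{v}p_{uv}\int_B\iota\sigma(f_{vu},\beta)\dd\check\nu^{(v)}(\beta).
\]
The function $L_0$ is designed precisely so that $\sum_v p_{uv}L_0(u,v)$ is the negative of this residual; a global consistency check is that integrating against $\pi(u)$ recovers $\sum_{u,v}\pi(u)p_{uv}L_0(u,v)=-\Lambda$, which is the image under $\iota$ of equation~(\ref{e:Lya-check4markov}). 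Once the pointwise identity holds, the martingale conclusion is immediate since each $Y_{n,k}$ is $\calB_k$-measurable and $Z_n=Y_{n,1}+\dots+Y_{n,n}$ is a sum of martingale differences.

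The principal obstacle is bookkeeping: one must distinguish the two stationary measures $\nu$ (associated with $\eta_-$) and $\check\nu$ (associated with $\eta_+$), recognize that only the $\check\nu$-stationarity is aligned with the Busemann identity after the pushforward by $f_{uv}^{-1}$, and carefully track the sign flips introduced by the opposition involution $\iota$, the cocycle identity for $\sigma$, and the symmetry $f_{uv}^{-1}=f_{vu}$. Alternately, one may invoke the decomposition $L_0(u,v)=-\Lambda+\psi(u)-\psi(v)$ from the preceding lemma to reorganize the $L_0$-sum as a telescoping coboundary plus the constant $-\Lambda$, but verifying that this matches the Busemann residual reduces to the same calculation.
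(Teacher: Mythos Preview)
Your proposal is correct and follows essentially the same route as the paper: reduce to a pointwise identity in $(u,\eta)$, invoke the Busemann relation of Proposition~\ref{P:Busemann}, integrate against $\check\nu^{(v)}$ (your pushforward $(f_{uv}^{-1})_{*}\check\nu^{(v)}$ is the same as the paper's substitution $\eta=g^{-1}\zeta$), and use stationarity of $\check\nu$ to collapse the $H(\eta,\beta)$ term to $h_0(u,\eta)$. One small sharpening: the cancellation by $L_0$ in fact occurs edge-by-edge, since by definition $L_0(u,v)=-\int_B\iota\sigma(f_{vu},\zeta)\dd\check\nu^{(v)}(\zeta)$ matches your $\iota\sigma$ integral before summing over $v$; you need not wait for the $p_{uv}$-average to see it.
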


\begin{proof}
	The claim that we need to prove is that for every $\xi$:
	\begin{equation}\label{e:martingale}
		\cexp{ \sigma(f_{x_{k-1}x_{k}},\xi)
			+  h_0 (x_{x-1}, f_{x_{k-1} x_k}.\xi) - h_0( x_{k-1},\xi)
			+ L_0(x_{k-1}, x_k)}{\calB_{k-1}} = 0.
	\end{equation}
	To this end we use Proposition~\ref{P:Busemann} that gives the identity
	\[
		\sigma(g,\xi)+\iota \sigma(g,\eta)-H(g.\xi,g.\eta)+H(\xi,\eta)=0.
	\]
	Substitute $\eta = g^{-1}. \zeta$ (so $\zeta = g.\eta$) to get
	\[
		\sigma(g,\xi)+\iota\sigma(g,g^{-1}.\zeta)-H(g.\xi,\zeta)+H(\xi,g^{-1}.\zeta)=0.
	\]
	Since $\sigma(g,g^{-1}.\zeta)=-\sigma(g^{-1},\zeta)$ we get
	\[
		\sigma(g,\xi)-\iota\sigma(g^{-1},\zeta)-H(g.\xi,\zeta)+H(\xi,g^{-1}.\zeta)=0.
	\]
	Fix an edge $(u,v)\in E$ in the graph $\calG$ and substitute $g=f_{uv}$ to get
	\begin{equation}\label{e:4terms}
		\sigma(f_{uv},\xi)-\iota \sigma(f_{uv}^{-1},\zeta)-H(f_{uv}.\xi,\zeta)+H(\xi,f_{uv}^{-1}.\zeta)=0.
	\end{equation}
	Let us now integrate this identity $\dd \check\nu^{(v)}(\zeta)$ and obtain four terms.
	(Note that integration is justified by the regularity Proposition~\ref{P:regularity}.)
	Since the first term is independent of $\zeta$ and $\check\nu^{(v)}$ is a probability measure, it remains  
	unchanged:
	\[
		\sigma(f_{uv},\xi).
	\]
	The second term is 
	\[
		-\int_B \iota \sigma(f_{uv}^{-1},\zeta)\dd\check\nu^{(v)}(\zeta)=L_0(u,v).
	\]
	The third term is 	
	\[
		-\int_B H(f_{uv}.\xi,\zeta)\dd\check\nu^{(v)}(\zeta)=-h_0(v,f_{uv}.\xi)
	\]
	while the fourth term is
	\begin{equation}\label{e:Hxiuv}
		\int_B H(\xi,f_{uv}^{-1}.\zeta)\dd\check\nu^{(v)}(\zeta).
	\end{equation}
	Note that for a function $\phi(u,v)$, the conditional expectation is given by 
	\[
		\cexp{\phi(u,v)}{u}=\sum_v  \check{\pi}(v)\cdot\check{p}_{uv}\cdot \phi(u,v).
	\] 
	Stationarity of $\check\nu$ with respect to the backward Markov chain means 
	\[
		\sum_v \check{\pi}(v)\cdot\check{p}_{uv}\cdot \check\nu^{(v)}=\check\nu^{(u)}.
	\]
	Applying the conditional expectation $\cexp{-}{u}$ to (\ref{e:Hxiuv}) therefore gives   
	\[
		\begin{split}
		\sum_v \check{\pi}(v)\cdot\check{p}_{uv}\cdot & \int_B  H(\xi,f_{uv}^{-1}.\zeta)\dd\check\nu^{(v)}(\zeta)\\
		&=\int_B H(\xi,\theta)\dd\check{\nu}^{(u)}(\theta)=h_0(u,\xi).
		\end{split}
	\]
	Putting these terms back together shows (\ref{e:martingale}), and thereby proves Proposition~\ref{prop:centering}.
\end{proof}

\medskip

\subsection{Convergence to covariance} 
\label{sub:convergence_to_covariance} \hfill{}\\

{\it We show that random variables $W_n$ in Theorem~\ref{T:Brown} 
converge in law to a fixed non-degenerate covariance tensor $\Phi$ on $\frak{a}$.}

\medskip

This is equivalent to proving that for any $\mathbf{v}\in\frak{a}$ the sequence of random variables
\begin{equation}
	W_n(\mathbf{v},\mathbf{v})=\frac{1}{n}\cexp{\sum_{k=1}^n \ip{Y_{n,k}}{\mathbf{v}} \otimes \ip{Y_{n,k}}{\mathbf{v}} }{\calB_{-n+1,0}}
\end{equation}
converges in measure to a constant denoted $\Phi(\mathbf{v},\mathbf{v})$,
with $\Phi(\mathbf{v},\mathbf{v}) \ne 0$ when $\mathbf{v} \ne 0$.

\medskip

We use the ergodic theorem to deduce properties of the variance, where $\Phi$ takes values on the $2$-forms $S^2(\frak{a})$ defined on Lie algebra $\mathfrak{a}$ as described in \S \ref{sub:gaussian-tensor}.
The important feature is the following
\begin{proposition}
	Let $\varphi:X\times B\to \bbR$ be a measurable function that is continuous in $B$ and $L^1(X,m)$
	where $\varphi(-,\xi)$ is $\calB_{-\infty,0}$-measurable.
	Then for any sequence $\xi_n\in B$ 
	\begin{equation}
		\lim_{n\to\infty} \frac{1}{n}\sum_{k=0}^{n-1} \varphi \circ T_F^{-k}(x,\xi_n)
		=\int_X \int_{B} \varphi(y,\eta)\dd\check{\nu}^{(y_0)}(\eta)\dd m(y).
	\end{equation}
	\label{p:ergodic}
\end{proposition}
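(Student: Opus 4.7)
The plan is to realize the Cesàro sum as a Birkhoff average on a natural skew-product system, and then upgrade pointwise convergence (in $\xi$) to convergence that is insensitive to the choice of sequence $\xi_n$.

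First, I would set up the reverse-time invariant measure
\[
\mathfrak{m}_- = \int_X \delta_x \otimes \check\nu^{(x_0)} \, \dd m(x)
\]
on $X \times B$. Stationarity of $\check\nu$ for the reversed Markov chain on $V \times B$ (the time-reversed analogue of Proposition~\ref{prop:Markov_stat_msr}) makes $\mathfrak{m}_-$ invariant under $T_F^{-1}$, and ergodicity follows from ergodicity of the base $(X,m,T)$ together with uniqueness of $\check\nu$ in Theorem~\ref{T:induced-RW}. Since $\varphi(-,\xi)$ is $\calB_{-\infty,0}$-measurable and $L^1$, and $\varphi$ is continuous in $\xi$ with $B$ compact, $\varphi$ is integrable against $\mathfrak{m}_-$. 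Birkhoff's ergodic theorem on $(X\times B,\mathfrak{m}_-,T_F^{-1})$ then yields
\[
\frac{1}{n}\sum_{k=0}^{n-1} \varphi \circ T_F^{-k}(x,\xi) \ \longrightarrow\ \int \varphi \, \dd\mathfrak{m}_- = \int_X\!\int_B \varphi(y,\eta)\, \dd\check\nu^{(y_0)}(\eta)\, \dd m(y)
\]
for $\mathfrak{m}_-$-a.e.\ $(x,\xi)$. By Fubini, for $m$-a.e.\ $x$ the convergence holds for $\check\nu^{(x_0)}$-a.e.\ $\xi$; continuity in $\xi$ then extends it to every point of a fixed countable dense subset $\{\xi^{(j)}\}\subset B$, simultaneously for $m$-a.e.\ $x$.

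The crux is to promote this to convergence that is \emph{uniform} in $\xi\in B$ for $m$-a.e.\ $x$; from uniform convergence, the statement for an arbitrary sequence $\xi_n\in B$ is immediate. For this, I would exploit the strong contraction of the reverse cocycle $F_{-k}(x)$ on the flag variety: by simplicity of the Lyapunov spectrum (Theorem~\ref{T:induced-RW}), the regularity of $\check\nu$ (Proposition~\ref{P:regularity}), and the Busemann estimate in Proposition~\ref{P:Busemann}, for $m$-a.e.\ $x$ one has
\[
d(F_{-k}(x).\xi,\ F_{-k}(x).\xi')\le C(x)\,\lambda^k
\]
for $\lambda<1$ and $\xi,\xi'$ in generic position with respect to the Oseledets direction. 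Combined with continuity of $\varphi(y,\cdot)$ and compactness of $B$, this forces the family $\xi\mapsto \frac{1}{n}\sum_{k=0}^{n-1}\varphi(T^{-k}x,F_{-k}(x).\xi)$ to be equicontinuous on $B$; Arzelà–Ascoli, together with pointwise convergence on the dense set $\{\xi^{(j)}\}$, then delivers uniform convergence in $\xi$.

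The main obstacle will be quantifying the interaction between the exponential contraction rate $\lambda^k$ and the pointwise-in-$x$ modulus of continuity of $\varphi(T^{-k}x,\cdot)$, both of which depend on the orbit. A robust way around this is to decompose
\[
\varphi(x,\xi) = \bar\varphi(x) + \tilde\varphi(x,\xi),\qquad \bar\varphi(x)=\int_B \varphi(x,\eta)\,\dd\check\nu^{(x_0)}(\eta),
\]
so that $\tilde\varphi$ is fiberwise $\check\nu^{(x_0)}$-centered. The $\bar\varphi$-part reduces directly to classical Birkhoff on $(X,m,T^{-1})$. The $\tilde\varphi$-part is controlled by comparing $\tilde\varphi(T^{-k}x,F_{-k}(x).\xi_n)$ to $\tilde\varphi(T^{-k}x,F_{-k}(x).\xi_0)$ for a fixed reference $\xi_0$ in generic position, using the above contraction estimate plus the exponential tails from Proposition~\ref{P:regularity} to absorb the $x$-dependence of the continuity modulus, and invoking Birkhoff on $(X\times B,\mathfrak{m}_-,T_F^{-1})$ for the reference point, whose limit $\int\tilde\varphi\,\dd\mathfrak{m}_-$ is zero.
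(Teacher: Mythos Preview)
Your approach is essentially the same as the paper's: the paper's entire proof reads ``This is an application of Birkhoff's ergodic theorem to the skew-product $T_F$,'' and you correctly identify $(X\times B,\mathfrak{m}_-,T_F^{-1})$ as the relevant system and the right-hand side as $\int\varphi\,\dd\mathfrak{m}_-$. Where you go further is in taking seriously the passage from ``$\mathfrak{m}_-$-a.e.\ $(x,\xi)$'' to ``for $m$-a.e.\ $x$ and every sequence $\xi_n$''; the paper does not spell this out at all, and your use of the cocycle contraction on $B$ together with the decomposition $\varphi=\bar\varphi+\tilde\varphi$ is a standard and sound way to bridge that gap (note, though, that your remark about continuity ``extending to a fixed countable dense subset'' is not doing real work --- the a.e.\ set of good $\xi$ depends on $x$, so the actual input is the contraction/equicontinuity argument you outline afterwards, and for non-generic $\xi_n$ one uses that for each fixed $\xi_n$ the set of bad $x$ has $m$-measure zero).
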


\begin{proof}
This is an application of Birkhoff's ergodic theorem to the skew-product $T_F$.
\end{proof}

\medskip

For any $\mathbf{v}\in\frak{a}$ we can apply Proposition \ref{p:ergodic} to the function
\begin{equation}
	\varphi(x,\xi)=\cexp{\ip{s'(x,\xi)}{\mathbf{v}}^2}{\dots, x_{-1}, x_0}
\end{equation}
and conclude the limit
\begin{equation}
	\lim_{n\to\infty} \frac{1}{n}\sum_{k=0}^{n-1} \cexp{\ip{s'_{-k}(x,\xi_n)}{\mathbf{v}}^2}{\dots, x_{-1}, x_0} 
	=\Phi(\mathbf{v},\mathbf{v})
\end{equation}
exists and is equal to
\begin{equation}
	\Phi(\mathbf{v},\mathbf{v})=\int_X \int_{B} \ip{s'(y,\eta)}{\mathbf{v}}^2\dd\check{\nu}^{(y_0)}(\eta)\dd m(y).
\end{equation}
Then $\Phi$ is a limit of an integral of non-negative inner products and is zero iff the inner product is zero a.e, $\Phi(\mathbf{v}, \mathbf{v}) = 0$ iff for a.e. $y = \dots, y_{-1}, y_0$ for $\check{\nu}^{(y_0)}$-a.e. $\eta$
\begin{equation}
	\ip{s'(y, \eta)}{\mathbf{v}}=0.
	\label{eq:IP}
\end{equation}
Eq. (\ref{eq:IP}) holds for a.e. $y \in X$ and $\eta \in B$, and in particular holds for all positive measure subsets $Y \subset X$ including the base of the induced system $Y^{(v)}$.
We shall generalize from the induced Bernoulli case to the Markovian case. 
Restricting the Markov chain $(X, m, T)$ to the induced system $(Y^{(v)}, m^{(v)}, T^{(v)} )$ at $v \in V$, the image of $s'$ is then contained in the perpendicular subspace $\mathbf{v}^{\perp} \subset \frak{a}$. The next proposition adapted from \cite{BQ_book}*{Prop. 13.19} shows that such restriction cannot hold at a single vertex. It was shown by Guivarc'h \cite{guivarc2008} for real semi-simple groups, and extended to the $\mathscr S$-adic case by Benoist--Quint for finite exponential moment. 

\begin{proposition}
Let $G$ be an algebraic semi-simple real Lie group and $\mu^{(v)}$ a Zariski dense Borel probability measure on $G$ with finite exponential moment, and $\frak{a}_{\mu^{(v)}}$ the linear span of $\Phi^{(v)}$. 
Then $\frak{a}_{\mu^{(v)}} = \frak{a}$, i.e. the Gaussian law $N_{\Phi^{(v)}}$ is non-degenerate. 
\end{proposition}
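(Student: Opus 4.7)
The plan is to argue by contradiction, using the integral representation of $\Phi^{(v)}$ to extract a cohomological equation which is then ruled out by Zariski density. Suppose $\frak{a}_{\mu^{(v)}} \neq \frak{a}$; then there exists a nonzero $\mathbf{v} \in \frak{a}$ with $\Phi^{(v)}(\mathbf{v},\mathbf{v})=0$. From the integral representation
\[
    \Phi^{(v)}(\mathbf{v},\mathbf{v}) \;=\; \int \!\!\int \ip{s'(g,\eta)}{\mathbf{v}}^2 \dd\check\nu^{(v)}(\eta)\dd\mu^{(v)}(g)
\]
derived along the lines of \S\ref{sub:convergence_to_covariance}, the vanishing of a nonnegative integral forces $\ip{s'(g,\eta)}{\mathbf{v}}=0$ for $\mu^{(v)}\otimes\check\nu^{(v)}$-a.e.\ pair. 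Continuity of $s'$ in $\eta$, which stems from the continuity of the integrated Busemann function $h_0$ guaranteed by Proposition~\ref{P:regularity}, promotes this identity to a pointwise equation on $\supp(\mu^{(v)}) \times \supp(\check\nu^{(v)})$.

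Next I would unravel the definition of $s'$. Setting $\psi(\xi) := \ip{h_0(v,\xi)}{\mathbf{v}}$, a continuous real-valued function on $B$ by Proposition~\ref{P:regularity}, the vanishing rewrites as the cohomological equation
\[
    \ip{\sigma(g,\xi) - \Lambda^{(v)}}{\mathbf{v}} \;=\; \psi(g.\xi)-\psi(\xi),
\]
valid for all $g\in\supp(\mu^{(v)})$ and all $\xi\in\supp(\check\nu^{(v)})$. The cocycle identity $\sigma(g_1g_2,\xi)=\sigma(g_1,g_2\xi)+\sigma(g_2,\xi)$ propagates the equation to every $g\in\Gamma_{\mu^{(v)}}=\langle\supp(\mu^{(v)})\rangle$, with the constant upgraded to $m\cdot\ip{\Lambda^{(v)}}{\mathbf{v}}$ where $m$ is the word length of $g$ in the chosen generators. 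Evaluating on a loxodromic element $g\in\Gamma_{\mu^{(v)}}$, iterating, and using that $\sigma(g^n,\xi) = n\lambda(g) + O(1)$ for any $\xi\in\supp(\check\nu^{(v)})$ in general position with the repelling flag $\xi_g^-$ (together with boundedness of $\psi$), one obtains
\[
    \ip{\lambda(g)}{\mathbf{v}} \;=\; m(g)\cdot\ip{\Lambda^{(v)}}{\mathbf{v}} \quad\text{for every loxodromic } g\in\Gamma_{\mu^{(v)}}.
\]

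The hard part is the final step, which is precisely where Zariski density and finite exponential moment enter. The constraint above forces the Jordan projections of all loxodromic elements of $\Gamma_{\mu^{(v)}}$ to lie in a countable union of affine hyperplanes of $\frak{a}$ (those defined by $\ip{-}{\mathbf{v}} \in \ip{\Lambda^{(v)}}{\mathbf{v}}\cdot\bbZ$), or entirely in the hyperplane $\mathbf{v}^\perp$ if $\ip{\Lambda^{(v)}}{\mathbf{v}}=0$. This is contradicted by the rigidity theorem of Guivarc'h \cite{guivarc2008} (extended to the general setting in \cite{BQ_book}*{Chapter 13}): for any Zariski dense subgroup of $G$, the additive group generated by Jordan projections of loxodromic elements is dense in $\frak{a}$, and cannot be contained in any proper closed subgroup or countable union of proper affine subspaces. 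The finite exponential moment hypothesis is used twice in the argument: to produce the regularity of $\check\nu^{(v)}$ behind Proposition~\ref{P:regularity}, and in the form of the density theorem for Jordan projections as stated in \cite{BQ_book}. Hence no such $\mathbf{v}$ exists, and $\frak{a}_{\mu^{(v)}}=\frak{a}$, proving non-degeneracy.
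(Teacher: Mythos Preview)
Your approach is essentially the one the paper takes: assume degeneracy, deduce that the centered cocycle takes values in a hyperplane $\mathbf{v}^\perp$, rewrite this as a cohomological equation for $\sigma$ with a continuous transfer function, and contradict Zariski density via the Benoist--Quint/Guivarc'h constraint on Jordan projections. The paper is simply more economical: rather than sketching the passage to loxodromic elements and the density of $\{\lambda(g)\}$, it observes that on the induced Bernoulli system $(Y^{(v)},m^{(v)},T^{(v)})$ the coboundary is the \emph{continuous} function $\xi\mapsto h_0(v,\xi)$, and then directly invokes \cite{BQ_CLT16}*{Theorem 4.11, Eq.~(4.26)} to obtain $\setdef{\lambda(\gamma)}{\gamma\in\Gamma_{\mu^{(v)}}}\subset \bbZ\Lambda+\frak{a}_{\mu^{(v)}}$.

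One imprecision in your write-up is worth flagging. You identify the transfer function as $\psi(\xi)=\ip{h_0(v,\xi)}{\mathbf{v}}$ \emph{and} set the constant equal to $\ip{\Lambda^{(v)}}{\mathbf{v}}$; these two choices are not consistent. If you use the paper's $h_0$-coboundary, then summing $s'$ over a first-return loop of length $n(y)$ yields
\[
\ip{\sigma(F_{n(y)}(y),\xi)-n(y)\Lambda}{\mathbf{v}}=\psi(F_{n(y)}(y).\xi)-\psi(\xi),
\]
so the constant for a single step of the induced walk is $n(y)\ip{\Lambda}{\mathbf{v}}$, an integer (loop-length) multiple of $\ip{\Lambda}{\mathbf{v}}$, not the fixed value $\ip{\Lambda^{(v)}}{\mathbf{v}}$. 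Fortunately this does not damage the conclusion: either way one lands on $\ip{\lambda(g)}{\mathbf{v}}\in\bbZ\cdot c$ for a fixed $c$, and the density of Jordan projections still yields the contradiction. Your ``word length'' $m(g)$ is likewise not well-defined as stated, but again the argument only needs \emph{some} integer for each semigroup word, which is what the cocycle identity provides.
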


\medskip

\begin{proof}

The inner product on the Cartan $\frak{a}$ is analogous to the Killing form. A zero inner product implies that $s'$ is valued in hyperplane $\mathbf{v}^{\perp} \subset \mathfrak{a}$. 
This contradicts Zariski density of periodic data as then $s'$  cannot be restricted to a proper subspace as we next show. 
Recall that $s(x, \xi) = \sigma(f_{x_0x_1}, \xi)$, $h(x,\xi)=h_0(x_0,\xi)$, $L(x)=-\Lambda+\psi(x_1)-\psi(x_0)$, and
the cohomological equation
\[
	\begin{split}
		s'(x, \xi) &= s(x, \xi) + (h \circ T_F - h)(x, \xi) -L(x)\\
			&= \sigma(f_{x_0x_1}, \xi)-\Lambda+\left(h_0(x_1,f_{x_0x_1}.\xi)+\psi(x_1)-h_0(x_0,\xi)-\psi(x_0)\right)
	\end{split}
\]
For the induced system $y \in Y^{(v)}$ (i.e. $y_0 = v$), if the return time is $n(y) = \min \setdef{n \in \bbN }{ y_n = y_0}$ 
then $s_{n(y)}(y, \xi) = \sigma(F_{n(y)}, \xi)$, and 
\begin{equation}
	\begin{split}
	s'_{n(y)}(y, \xi) &= s_{n(y)}(y, \xi) + \left(h \circ T^{n(y)} - h\right)(y, \xi) + L_n(y)\\
		&=\sigma(F_{n(y)}(y),\xi)-n(y)\Lambda+ \left(h_0(v,F_{n(y)}(y).\xi)-h_0(v,\xi)\right)
	\end{split}
\end{equation}
Thus for $y \in Y^{(v)}$ the coboundary term is given by a continuous function $B\overto{} \frak{a}$, $\xi\mapsto h_0(v,\xi)$, and therefore $s'_{n(y)}(y, \xi)$ satisfies the assumptions of Benoist--Quint (see \cite{BQ_book}*{\S 13.7} or \cite{BQ_CLT16}*{Theorem 4.11}).
Specifically, from \cite{BQ_CLT16}*{Eq. (4.26)} we get that 
\[
	\setdef{\ell(\gamma)}{\gamma\in \Gamma_{\mu^{(v)}}}\subset \bbZ\Lambda+\frak{a}_{\mu^{(v)}}
\]
which contradicts Zariski density of the periodic data, as in Assumption \ref{standing_assumptions}, unless $\frak{a}_{\mu^{(v)}} = \frak{a}$.

This implies that $\Phi^{(v)}$ is a non-degenerate quadratic form, and using  Proposition \ref{prop:induced_lambdas}, we conclude that
\[
	\Phi =  m(Y^{(v)}) \cdot \Phi^{(v)}
\]
is a non-degenerate covariance $2$-tensor on $\frak{a}$. 
\end{proof}


\subsection{Lindeberg condition} 
\label{sub:lindeberg_condition} \hfill{}\\

{\it We verify condition (3) of the Brown CLT is satisfied and conclude that it applies to cocycles $s$ and $s'$. }

\medskip

Define the continuous function $W_{\delta}$ by restricting $W$ to the tails with $\delta \ge 0$,

\begin{equation}
	W_{\delta}(\xi) 
		= \int_X s'(x, \xi) \otimes s'(x, \xi) \cdot 1_{\norm{s'(x, \xi)} \geq \delta} \dd m(x).
\end{equation}
The cocycle $s'$ is bounded from above independently of $\xi$, set $s'_{\sup}(x) = \sup_{\xi \in B} s'(x, \xi)$, then
\begin{equation}
	W^{\sup}_{\delta} = 
	\int_X s'_{\sup}(x) \otimes s'_{\sup}(x) \cdot 1_{\norm{s'_{\sup}(x)}\geq \delta} \dd m(x).
\end{equation}
Observe $\lim_{\delta \rightarrow \infty} W^{\sup}_{\delta}= 0$ and setting $\delta = \sqrt{n} \varepsilon$ for a.e. $x \in X$ we have from Theorem \ref{T:Brown} part (3)
\begin{align*}
W_{\epsilon, n}
	&= \sum_{k=1}^n \cexp{Y_{n,k}(x) \otimes Y_{n,k}(x) \cdot 1_{\norm{Y_{n,k}}\geq \sqrt{n} \varepsilon  }}{\calB_{k-1}}\\
	& = \sum_{k=1}^n \frac{1}{n} \int_X s'(f_{x_k x_{k-1}}, F_{k-1}.\xi_n ) ^{2} \cdot 1_{\norm{s'(f_{x_k x_{k-1}}, F_{k-1}(x).\xi_n )}\geq \sqrt{n} \varepsilon } \dd m(x)\\
	& \leq \frac{1}{n} \sum_{k=1}^n W_{\sqrt{n}\epsilon} (F_{k-1}(x).\xi_n ) \\
	& \leq \frac{n}{n} W^{\sup}_{\sqrt{n}\epsilon} \overto{} 0 \qquad \text{ as } n \rightarrow \infty. 
\end{align*}
Hence the three conditions Brown's Theorem \ref{T:Brown} are satisfied, and $W_n(x) \rightarrow \Phi$ converges in distribution to a non-degenerate Gaussian on $\frak a$. 

\medskip

\section{Main Theorem} 
\label{sec:Main_theorem} \hfill{}\\

{\it We conclude this paper by showing the non-commutative CLT for $\kappa(F_n(-))$ and $\sigma(F_n(-), \xi)$ based on the Brown CLT for $s$ and $s'$.}

\medskip

\begin{theorem} \label{T:Main}
Let $(X,m,T)$ be a topologically mixing SFT with Markov measure  $m$, $G$ a semisimple real Lie group with finite center and no non-trivial compact factors, $F:X \overto{} G$ a function depending on the first step $F(x) = f_{x_0x_1}$ with Zariski dense periodic data. 

Then the associated Lyapunov spectrum $\Lambda = \lim n^{-1}\cdot \kappa(F_n(x))$ is simple, i.e. $\Lambda \in \mathfrak{a}^{++}$, and for all $ \varphi \in C_c(\mathfrak a)$

\begin{equation}
\int_X \varphi \left( \frac{\sigma(F_n(x), \xi) - n \Lambda}{\sqrt{n} } \right) \dd m(x) ~\overto{} \int_{\mathfrak a} \varphi(t) \dd N_{\Phi}(t)
\label{eq:CLTsigma}
\end{equation}

and

\begin{equation}
\int_X \varphi \left( \frac{\kappa(F_n(x)) - n \Lambda}{\sqrt{n} } \right) \dd m(x) ~\overto{} \int_{\mathfrak a} \varphi(t) \dd N_{\Phi}(t)
\label{eq:CLTkappa}
\end{equation}
where $N_{\Phi}$ is a non-degenerate Gaussian distribution with mean zero on $\mathfrak{a} \cong \bbR^{\text{rank} G}$, and $\Phi$ is a non-degenerate covariance matrix.
\end{theorem}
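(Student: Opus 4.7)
The plan is to assemble Brown's martingale CLT (Theorem \ref{T:Brown}) applied to the triangular array $\{Y_{n,k}\}$ from Eq. (\ref{e:Ynk}), whose three hypotheses (martingale property, covariance convergence, Lindeberg condition) have just been verified in Sections \ref{sub:centering}--\ref{sub:lindeberg_condition}. This immediately yields $Z_n = Y_{n,1} + \cdots + Y_{n,n} \overto{} N_{\Phi}$ in law, with $\Phi$ a non-degenerate covariance tensor on $\frak{a}$.

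First I would translate this into the CLT for the Iwasawa cocycle $\sigma$, Eq. (\ref{eq:CLTsigma}). The cohomology substitution $s'(x,\xi) = s(x,\xi) - h \circ T_F(x,\xi) + h(x,\xi)$ with $h(x,\xi) = h_0(x_0,\xi)$, together with the factorization $L_0(u,v) = -\Lambda + \psi(u) - \psi(v)$ from Section \ref{sub:cocycle_setup}, makes the sum $\sum_k Y_{n,k}$ telescope:
\[
	Z_n \;=\; \frac{1}{\sqrt n}\bigl(\sigma(F_n(x),\xi_n) - n\Lambda\bigr) \;+\; \frac{1}{\sqrt n}\,R_n(x),
\]
where
\[
	R_n(x) \;=\; h_0(x_0,\xi_n) - h_0(x_n,F_n(x).\xi_n) + \psi(x_0) - \psi(x_n).
\]
Since $V$ is finite and $h_0$ is continuous on the compact space $V \times B$, the remainder $R_n$ is uniformly bounded by an absolute constant, so $R_n/\sqrt n \to 0$ uniformly. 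Combined with the convergence of $Z_n$, this gives Eq. (\ref{eq:CLTsigma}).

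Next I would transfer the result to the Cartan projection $\kappa$, Eq. (\ref{eq:CLTkappa}). The strategy is to show that $\kappa(F_n(x)) - \sigma(F_n(x),\xi)$ is bounded in probability for a fixed generic $\xi$, hence $o(\sqrt n)$. Applying the Busemann identity of Proposition \ref{P:Busemann} to $g = F_n(x)$ with $\xi$ fixed and $\eta = \eta_-(x)$ the Oseledets flag, one obtains
\[
	\sigma(F_n(x),\xi) + \iota\sigma(F_n(x),\eta_-(x)) \;=\; H(F_n(x).\xi,\,\eta_-(T^nx)) - H(\xi,\eta_-(x)),
\]
and, using that $\sigma(F_n(x),\eta_-(x)) - n\iota\Lambda$ can be compared with $\iota\kappa(F_n(x)) - n\iota\Lambda$ up to a bounded Busemann gap (by the Cartan decomposition $F_n(x) = k_1\exp(\kappa(F_n(x)))k_2$ and the equality $\sigma(g,k_2^{-1}P) = \kappa(g)$), one arrives at
\[
	\|\kappa(F_n(x)) - \sigma(F_n(x),\xi)\| \;\le\; C\bigl(\|H(\xi,\eta_-(x))\| + \|H(F_n(x).\xi,\eta_-(T^nx))\|\bigr).
\]
Proposition \ref{P:regularity} bounds both $H$-terms in $L^p$ uniformly in $n$, so their contribution divided by $\sqrt n$ vanishes in probability. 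Combined with Eq. (\ref{eq:CLTsigma}) applied to the fixed $\xi$, this gives Eq. (\ref{eq:CLTkappa}).

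The main obstacle is step two: justifying the uniform-in-$n$ $L^p$-bound for $\|H(F_n(x).\xi,\eta_-(T^nx))\|$. The forward orbit $F_n(x).\xi$ approaches the attracting Oseledets flag, so $(F_n(x).\xi,\eta_-(T^nx))$ remains in general position with a controlled Busemann distance; this control is exactly the content of the regularity Proposition \ref{P:regularity} together with the stationarity of $\nu$. Once this tail bound is in hand, the reduction from $\kappa$ to $\sigma$ is routine and the theorem follows.
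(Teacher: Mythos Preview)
Your reduction of the Iwasawa CLT (Eq.~(\ref{eq:CLTsigma})) to Brown's theorem via the telescoping of $h_0$ and $\psi$ is correct and matches the paper's argument, which is stated more tersely but relies on exactly the same cohomology.

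For the passage from $\sigma$ to $\kappa$, the paper takes a shorter route: it isolates a separate Lemma (stated as Lemma~\ref{lem:bdd_dist}) asserting directly that $\|\sigma(F_n(x),\xi)-\kappa(F_n(x))\|\le M$ for $m$-a.e.\ $x$, and defers its proof to \cite{BQ_book}*{Proposition 10.9(d)}, noting only that the input is the regularity of the stationary measure (your Proposition~\ref{P:regularity}). Once this bounded-gap lemma is in hand, the reduction is a two-line uniform continuity argument for $\varphi\in C_c(\frak a)$.

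Your alternative derivation via the Busemann identity is in the right spirit---regularity is indeed the key ingredient---but the displayed inequality
\[
	\|\kappa(F_n(x))-\sigma(F_n(x),\xi)\|\le C\bigl(\|H(\xi,\eta_-(x))\|+\|H(F_n(x).\xi,\eta_-(T^nx))\|\bigr)
\]
is not actually established by what you wrote. The Busemann identity of Proposition~\ref{P:Busemann} relates $\sigma(g,\xi)+\iota\sigma(g,\eta)$ to a difference of $H$-values; it does not mention $\kappa$ at all. Your attempt to bring $\kappa$ in via $\sigma(g,k_2^{-1}P)=\kappa(g)$ requires the specific flag $k_2^{-1}P$ coming from the Cartan decomposition of $F_n(x)$, which has no a~priori relation to the Oseledets flag $\eta_-(x)$ you plugged into the Busemann identity. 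So the middle step---``$\sigma(F_n(x),\eta_-(x))$ can be compared with $\iota\kappa(F_n(x))$ up to a bounded Busemann gap''---is exactly the content of the $\kappa$-vs-$\sigma$ comparison you are trying to prove, and is not a consequence of the identity you invoked. The actual mechanism (as in \cite{BQ_book}) works representation by representation: one bounds $\chi_\alpha(\kappa(g))-\chi_\alpha(\sigma(g,\xi))=\log\|\rho_\alpha(g)\|-\log\tfrac{\|\rho_\alpha(g)v\|}{\|v\|}$ by controlling the angle between $p_\alpha(\xi)$ and the contracting hyperplane of $\rho_\alpha(g)$, and regularity of $\nu$ gives this control in probability. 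Either carry that argument through or, as the paper does, cite it.
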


\begin{proof}
The CLT for the Iwasawa cocycle in Eq. (\ref{eq:CLTsigma}) follows from Brown's Theorem \ref{T:Brown} applied to $s'$ as follows. 
The finite second moment conditions are satisfied by $L^2$-integrability of Lemma \ref{lem:lp-integrability} implying finite variance. 
Since $s'$ is centered and $s$ is cohomologous to $s'$ then $s$ is centerable. 
It follows from regularity Proposition \ref{P:regularity} and Proposition \ref{prop:centering} that for all $\alpha \in \Pi$ the cocycle $\chi_{\alpha} \circ s'$ and hence $\chi_{\alpha}(s_n(x, \xi)) = \chi_{\alpha}(\sigma(f_n(x), \xi))$ are centerable. Hence by definition the Iwasawa cocycle $\sigma$ is also centerable. 
The resulting Gaussians $\Phi$ are non-degenerate by \S \ref{sub:convergence_to_covariance} and satisfy the Lindeberg condition as in \S $\ref{sub:lindeberg_condition}$. Finally Brown's Theorem \ref{T:Brown} applied to $\sigma$ yields Eq. (\ref{eq:CLTsigma}). 

\medskip

To show the CLT for the Cartan Projection Eq.(\ref{eq:CLTkappa}) first consider 

\begin{lemma}
For all $\xi \in B$ and $m$-a.e. $x \in X$ there exists $M>0$ such that
\begin{equation}
	\norm{\sigma(F_n(x),\xi) - \kappa(F_n(x))} \le M.
\end{equation}
\label{lem:bdd_dist}
\end{lemma}

\begin{proof}
Essentially the same as \cite{BQ_book}*{Proposition 10.9 (d)}, it relies on the regularity of the stationary measure, and carries over the the Markov chain in our setting. 
\end{proof}

By Lemma~\ref{lem:bdd_dist}
$\sigma$ and $\kappa$ are a bounded distance apart in norm, scaling by $\sqrt{n}$ yields the desired result as follows. 
Define 
\begin{equation}
I_n  = \int_X 
	\abs{\varphi\left(\frac{\sigma(F_n(x), \xi) - n \Lambda}{\sqrt{n} } \right) 
		- \varphi \left( \frac{\kappa(F_n(x)) - n \Lambda}{\sqrt{n} } \right)}
		 	\dd m(x)
\label{eq:CLTdifference}
\end{equation}
we show $I_n \to 0$. 
Let $\epsilon > 0$, since $\varphi$ is uniform continuous on $\frak{a}$ then for any $\mathbf{v}, \mathbf{w} \in\frak{a}$ there is $\delta > 0$ such that $\norm{\mathbf{v}- \mathbf{w}} \le \delta$ implies $\norm{\varphi(\mathbf{v}) - \varphi(\mathbf{w})} \le \epsilon$. 
By Lemma~\ref{lem:bdd_dist} there exists $M>0$ and $N > 0$ such that for all $n \ge N$
 \begin{equation}
 m\left( x \in X | \norm{\sigma(F_n(x),\xi) - \kappa(F_n(x))} \ge M \} \right) < \epsilon.
 \end{equation}
 Taking $n \ge \max(N, \frac{M^2}{\delta^2})$ one has 
$I_n \le  \epsilon (2 \norm{\varphi}_{\infty} + 1)$ it follows that $I_n \to 0$ as needed. 
\end{proof}

\medskip

\bibliography{CLT_Matrix_ArXiv_v1}

\end{document}